\documentclass{amsart}

\usepackage{amsmath,amssymb,amsfonts,mathtools}
\usepackage{mathrsfs}
\usepackage{bm}
\usepackage{graphicx}
\usepackage{booktabs}
\usepackage{enumitem}
\usepackage{algorithm}
\usepackage{algpseudocode}
\usepackage{microtype}
\usepackage{wrapfig}
\usepackage{subcaption}
\usepackage{xcolor}
\usepackage{tikz}
\usetikzlibrary{arrows.meta,decorations.pathreplacing}
\definecolor{hilbertheader}{RGB}{216,231,246}
\definecolor{hilbertactive}{RGB}{232,244,253}
\definecolor{hilbertactiveline}{RGB}{158,198,226}
\definecolor{hilbertinactive}{RGB}{253,239,227}
\definecolor{hilbertinactiveline}{RGB}{235,181,141}
\definecolor{hilbertdash}{RGB}{0,113,188}
\usepackage{hyperref}

\numberwithin{equation}{section}

\theoremstyle{plain}
\newtheorem{theorem}{Theorem}[section]
\newtheorem{lemma}{Lemma}[section]

\theoremstyle{definition}
\newtheorem{definition}{Definition}[section]

\theoremstyle{plain}
\newtheorem{assumption}{Assumption}[section]
\newtheorem{remark}{Remark}[section]

\newcommand{\M}{\mathcal{M}}
\newcommand{\Lop}{\mathcal{L}}

\newcommand{\pRet}{\operatorname{p-Ret}}

\newcommand{\grad}{\operatorname{grad}}
\newcommand{\Hess}{\operatorname{Hess}}
\newcommand{\trg}{\operatorname{tr}_{g}}

\newcommand{\1}{\mathbf{1}}
\newcommand{\R}{\mathbb{R}}
\newcommand{\E}{\mathbb{E}}

\newcommand{\dd}{\,\mathrm{d}}
\newcommand{\MC}{\mathrm{MC}}
\newcommand{\RQMC}{\mathrm{RQMC}}
\newcommand{\ARQMC}{\mathrm{Array\mbox{-}RQMC}}

\newcommand{\cl}{\mathrm{cl}}
\newcommand{\bd}{\partial}

\title[QMC Meshfree Methods on Parameterized Manifolds]{Quasi-Monte Carlo based meshfree methods for parabolic equations on parameterized Riemannian manifolds}

\author{Zhijian He}
\address{School of Mathematics, South China University of Technology, Guangzhou 510641, China}
\email{hezhijian@scut.edu.cn}

\author{Zhiyuan Hui}
\address{School of Mathematics, Shanghai University of Finance and Economics, Shanghai 200433, China}
\email{2023213122@stu.sufe.edu.cn}
\thanks{$\dagger$Corresponding author. Z. He is supported by the NSF of China (No. 12671500) and  the Guangdong Basic and Applied Basic Research Foundation (No. 2025A1515011888). Z. Hui is partially supported by the Fundamental Research Funds for the Central Universities (No. CXJJ-2024-437). C. Sheng is supported by the NSF of China (No. 12571389). B. Su is supported by the NSF of China (No. 12501541) and the Scientific Research Start-up Funds of Hainan University (No. XJ2400010491). T. Zhou is supported by the NSF of China (Nos. 12288201 and 12461160275).}

\author{Changtao Sheng}
\address{School of Mathematics, Shanghai University of Finance and Economics, Shanghai 200433, China}
\email{ctsheng@sufe.edu.cn}

\author{Bihao Su}
{\address{School of Mathematics and Statistics, Hainan University, Haikou 570100, China}
\email{bihaosu@hainanu.edu.cn}}

\author{Tao Zhou$^\dagger$}
\address{Institute of Computational Mathematics and Scientific/Engineering Computing, Academy of Mathematics and Systems Science, Chinese Academy of Sciences, Beijing 100190, China}
\email{tzhou@lsec.cc.ac.cn}

\keywords{parameterized Riemannian manifold, Feynman--Kac representation, retraction random walk, randomized quasi-Monte Carlo, Array-RQMC, Hilbert ordering}
\subjclass[2020]{65C05, 65M75, 58J65, 60H35, 58J35}

\begin{document}

\begin{abstract}
We develop and analyze a randomized path sampling framework for the pointwise approximation of linear parabolic equations on parameterized Riemannian manifolds, treating both closed manifolds and manifolds with Dirichlet boundary. Based on the Feynman--Kac representation, we construct a parameter-coordinate retraction-based discretization, referred to as the p-Ret scheme, for the manifold diffusion generated by the uniformly elliptic operator, and use it to formulate the Monte Carlo (MC) and randomized quasi-Monte Carlo (RQMC) methods. We derive the local weak consistency of the p-Ret transition and first-order weak convergence of the temporal discretization on closed manifolds. For fixed $h$, we show that RQMC sampling yields asymptotically faster variance decay than standard MC. On closed manifolds, the variance is $o(n^{-1})$ and, under additional regularity, satisfies the sharper bound $O(n^{-2+\varepsilon})$ for any $\varepsilon>0$. For Dirichlet problems, despite the discontinuities induced by numerical first exit, nested-uniformly scrambled Sobol' sampling attains $O(n^{-1-1/s+\varepsilon})$ variance for any $\varepsilon>0$ under suitable regularity assumptions, where $s$ denotes the nominal path-space dimension. More notably, we further develop an Array-RQMC strategy based on stepwise low-dimensional randomized point sets and a Hilbert space-filling curve that maps high-dimensional particle states to a one-dimensional ordering, and establish a variance rate of $o(n^{-1})$ under suitable assumptions. Numerical results demonstrate that both RQMC methods reduce variance and root mean square error (RMSE) relative to standard MC and remain effective for manifolds with intrinsic dimension up to $50$.
\end{abstract}

\maketitle

\section{Introduction}
\label{sec:introduction}

Diffusion and transport on curved geometries arise in many scientific and engineering applications and lead naturally to partial differential equations (PDEs) on surfaces and Riemannian manifolds. A broad range of deterministic methods has been developed for such problems, including surface, adaptive, and trace finite element methods, embedding and closest-point techniques, and meshfree
approaches (see, e.g., \cite{BertalmioEtAl2001,DednerEtAl2013,Demlow2009,DziukElliott2013,JiangLiYanHarlim2024,LiangZhao2013,MacdonaldRuuth2009,OlshanskiiEtAl2009,Reusken2015,RuuthMerriman2008} and the references therein). Accurate mesh generation on curved surfaces and high-dimensional manifolds is often challenging for traditional mesh-based deterministic methods, while global spatial discretizations lead to rapidly growing discrete systems as the intrinsic dimension increases, making it difficult to overcome the curse of dimensionality. This motivates a complementary meshfree probabilistic approach that directly approximates the solution pointwise through stochastic trajectories.

The Feynman--Kac representation provides a fundamental bridge between PDEs and stochastic processes by expressing their solutions as expectations over
diffusion paths (cf.~\cite{Fox2002,Gobet2000,GobetMaire2005,Kac1949,SuXuSheng2023}). In Euclidean domains, this viewpoint has led to a broad class of Monte Carlo-type methods
\cite{BuchmannPetersen2006,HuiLiuShengXu2026,MilsteinTretyakov2003,Muller1956,ShengSuXu2026}, which are particularly attractive for complex domains and high-dimensional
problems. Extending this framework to Riemannian manifolds is natural, but introduces additional challenges, since the simulated paths must respect the
manifold geometry while correctly approximating the intrinsic diffusion. On Riemannian manifolds, the Feynman--Kac representation remains valid once
the Euclidean diffusion is replaced by a manifold-valued diffusion \cite{Hsu2002}, whose numerical approximation has been approached through
geodesic random walks, intrinsic SDE discretizations, constrained Brownian dynamics, and projection-based schemes (see, e.g.,\cite{ArmstrongBrigo2018,ArmstrongKing2022,HuiShengSuZhou2026,
Jorgensen1975,LelievreRoussetStoltz2012,SugimotoEtAl2024}). More recently, Schwarz et al.~\cite{SchwarzEtAl2025} developed an efficient retraction-based random walk that replaces costly geodesic computations by second-order retractions while preserving convergence to the correct Brownian motion.

From a computational perspective, the Feynman--Kac representation reduces the pointwise evaluation of the PDE solution to the approximation of an expectation over diffusion paths, for which Monte Carlo sampling provides a natural and highly parallelizable approach, although its root-mean-square sampling error decays only at the rate $O(n^{-1/2})$ (cf. \cite{Caflisch1998,ShengSuXu2023}). This relatively slow convergence motivates randomized quasi-Monte Carlo (RQMC) methods, which replace independent random samples by randomized low-discrepancy point sets while preserving unbiasedness \cite{He2018,HeWang2015,Owen1997MonteCarloVariance,Owen1995,Owen2003AlternativeScrambling}. In particular, nested-uniformly scrambled Sobol' point sets achieve variance $o(n^{-1})$ for square-integrable integrands and faster rates under additional smoothness assumptions \cite{Dick2011,DickKuoSloan2013,Owen1997}. The effectiveness of RQMC, however, depends strongly on the regularity of the integrand and on the transformations used to generate the sampling variables \cite{BasuOwen2016}. QMC methods have also proved effective for high-dimensional PDE computations under suitable regularity assumptions (cf.~\cite{GrahamEtAl2011,KuoNuyens2016,KuoSchwabSloan2012}).

Applying RQMC to Feynman--Kac discretizations on parameterized Riemannian manifolds introduces two additional challenges. First, with $K$ time steps and $r-1$ random variables per step, the direct path-space formulation has nominal dimension $s=(r-1)K$, which can become large as $K$ or $r$ increases. This motivates sequential RQMC methods, in particular Array-RQMC (cf.~\cite{HoOwen2026Array,LEcuyerLecotTuffin2008}), which apply low-dimensional randomized point sets successively over the time steps rather than to the full path space. For multidimensional state spaces, the particles must additionally be ordered before coupling with the randomized points, for which Hilbert space-filling curves provide a natural mechanism \cite{GerberChopin2015,HeOwen2016}. A second difficulty concerns the regularity of the path-space integrand. While the closed-manifold functional remains sufficiently regular under suitable assumptions, numerical first exit in the Dirichlet case creates discontinuity interfaces in path space, requiring a separate variance analysis for scrambled Sobol' point sets (cf.~\cite{He2018,HeWang2015,HoOwen2026RQMC}).

These challenges expose a gap between the existing p-Ret analysis and the available RQMC theory. To the best of our knowledge, an RQMC error theory for Feynman--Kac functionals generated by p-Ret discretizations of manifold diffusions has not yet been developed. Existing p-Ret analysis has focused primarily on the geometric approximation of manifold diffusions, while RQMC and sequential RQMC theory has been developed mainly for high-dimensional integration, SDE discretizations, and Markov processes in Euclidean state spaces. In particular, no unified analysis is currently available for direct path-space RQMC, first-exit discontinuities in Dirichlet problems, and Hilbert-ordered Array-RQMC within the p-Ret framework.

The main purpose of this paper is to develop a randomized p-Ret framework for the pointwise approximation of linear parabolic equations on parameterized
Riemannian manifolds, with the aim of improving sampling efficiency beyond standard Monte Carlo. The key to the efficiency of this framework lies in the Feynman--Kac
representation and a parameter-coordinate p-Ret discretization of the associated manifold diffusion, leading to a common discrete Feynman--Kac functional for MC, direct RQMC, and Array-RQMC sampling. Moreover, the framework naturally accommodates both closed manifolds and manifolds with Dirichlet boundary. This construction brings four key
computational advantages for the numerical treatment of high-dimensional manifold problems:
\begin{itemize}[leftmargin=1.7em,itemsep=0.25em,topsep=0.3em]

\item \emph{Meshfree and high-dimensional computation.}
The p-Ret construction relies only on local geometric information and avoids
the costly mesh generation and large scale linear solves inherent in
conventional mesh based discretizations, thereby alleviating the curse of
dimensionality for high dimensional manifold problems.

\item \emph{Pointwise and parallel evaluation.}
The Feynman--Kac formulation enables pointwise evaluation at prescribed space--time locations. The computation is naturally parallel across such locations, since each can be treated separately.

\item \emph{Reduced path-space dimension.}
Each p-Ret step samples a direction $Z\in\mathbb S^{r-1}$ from only
$r-1$ independent uniform variables, whereas an $r$-dimensional
Euler--Maruyama discretization requires $r$ random inputs per step.
Consequently, over $K$ time steps, the nominal dimension of the direct
RQMC problem is reduced from $rK$ to $(r-1)K$. This removes one stochastic
coordinate at every time step and directly lowers the high-dimensional
integration cost by direct RQMC.

\item \emph{Unified discrete probabilistic structure.}
MC, direct RQMC, and Array-RQMC are built on the same p-Ret dynamics and
discrete Feynman--Kac functional, and hence share the same time-discrete
target. This isolates the effect of the sampling strategy from the temporal
discretization and enables a consistent comparison of their variance and
sampling errors.

\end{itemize}


Beyond these computational advantages, we also analyze the errors arising from the p-Ret discretization and randomized sampling. To the best of our knowledge, no comparable RQMC error theory is available for p-Ret based Feynman--Kac discretizations on Riemannian manifolds. On closed manifolds, we first establish local weak consistency of the p-Ret transition and first-order weak convergence of the temporal discretization. Building on this result, we show that direct RQMC sampling yields variance $o(n^{-1})$ for square-integrable discrete Feynman--Kac functionals and, under additional regularity, the sharper bound $O\bigl(n^{-2}(\log n)^{3s+\kappa_h-2}\bigr)$, where $s=(r-1)K$ and $\kappa_h$ denotes a polynomial-growth exponent of the associated Gaussian-coordinate integrand. The Dirichlet case is more delicate because the numerical first-exit mechanism creates discontinuity interfaces in path space. We resolve this difficulty by a layerwise decomposition of the stopped functional and verify Owen's boundary growth condition \cite{Owen2006Halton} for the resulting smooth layer weights, which leads to the variance bound $O\bigl(n^{-1-1/s+\varepsilon}\bigr)$ for any $\varepsilon>0$. Finally, for Hilbert-ordered Array-RQMC, we establish the variance rate $o(n^{-1})$ for fixed $K$ under suitable stability and regularity assumptions.

The remainder of the paper is organized as follows. Section~\ref{sec:pret-algorithm} introduces the p-Ret discretization, the associated discrete Feynman--Kac functionals, and the MC and direct RQMC sampling schemes. The temporal discretization and sampling errors for MC and direct RQMC are analyzed in Section~\ref{sec:error-estimates}. Section~\ref{sec:array-rqmc} is devoted to the construction and analysis of the Array-RQMC method. Numerical results are reported in Section~\ref{sec:numerical-experiments}, and Section~\ref{sec:conclusion} concludes the paper.

\section{The p-Ret discretization with MC and direct RQMC sampling}
\label{sec:pret-algorithm}
In this section, we introduce the p-Ret discretization for linear parabolic equations on parameterized Riemannian manifolds and define the corresponding randomized estimators. The local consistency of the discrete transition operator will be used in the subsequent error analysis.

\subsection{Geometric setting and Feynman--Kac representations}
\label{subsec:setting-fk}

Let $(\M,g)$ be a smooth compact connected $m$-dimensional Riemannian manifold, either closed or with smooth boundary, and let $\M^\circ=\M\setminus\partial\M$ in the latter case. We assume that $\M$ admits a global regular parameterization
\begin{equation*}
    \Phi:U\longrightarrow\M\subset\R^D,
    \quad
    x=\Phi(\xi),
    \quad
    \xi=(\xi^1,\ldots,\xi^m),
\end{equation*}
subject to the prescribed parameter identifications, with the Riemannian metric induced by $\Phi$. In the closed case, $\Phi$ is allowed to be a smooth immersion and need not be globally injective in the ambient space. In the boundary case, $\Phi$ is assumed to be one-to-one modulo the prescribed periodic identifications, extends smoothly with full rank to a neighborhood of $\overline U$, and satisfies $\Phi(U)=\M^\circ$ and $\Phi(\partial U)=\partial\M$, where $\partial U$ denotes the nonperiodic boundary of the parameter domain.

For each $x\in\M$, let $A(x):T_x\M\to T_x\M$ be a smooth $g$-self-adjoint diffusion tensor, and let $b$ be a smooth tangent vector field on $\M$. For $\varphi\in C^2(\M)$, we define
\begin{equation}
    \Lop\varphi=\frac12\trg\big(A\Hess_{\M}\varphi\big)+\langle b,\grad_{\M}\varphi\rangle_g.
    \label{eq:intrinsic-operator}
\end{equation}
Here, $\grad_{\M}\varphi$ and $\Hess_{\M}\varphi$ denote the Riemannian gradient and Hessian, respectively, $\langle\cdot,\cdot\rangle_g$ is the Riemannian metric, and $\trg$ denotes the corresponding metric trace. We assume that $A$ is uniformly elliptic, namely, there exist constants $0<\lambda\le\Lambda$ such that
\begin{equation}
    \lambda\lvert v\rvert_g^2\le\langle A(x)v,v\rangle_g\le\Lambda\lvert v\rvert_g^2,
    \quad
    x\in\M,\quad v\in T_x\M.
    \label{eq:uniform-ellipticity}
\end{equation}
The special case $A=I$ and $b=0$ reduces to $\Lop=\frac12\Delta_\M$, where $\Delta_\M$ be the Laplace–Beltrami operator. With the operator $\Lop$ defined above, we consider the following two parabolic problems.

\paragraph{\underline{Closed manifold}}
Given $u_0:\M\to\R$ and $f:\M\times[0,T]\to\R$, find $u$ such that
\begin{equation}
    \begin{cases}
    \partial_tu(x,t)-\Lop u(x,t)=f(x,t),
        &(x,t)\in\M\times(0,T],\\[4pt]
    u(x,0)=u_0(x),
        &x\in\M.
    \end{cases}
    \label{eq:closed-parabolic-problem}
\end{equation}

\paragraph{\underline{Manifold with Dirichlet boundary}}
Given $\psi:\partial\M\times[0,T]\to\R$, find $u$ such that
\begin{equation}
    \begin{cases}
    \partial_tu(x,t)-\Lop u(x,t)=f(x,t),
        &(x,t)\in\M^{\circ}\times(0,T],\\[4pt]
    u(x,t)=\psi(x,t),
        &(x,t)\in\partial\M\times(0,T],\\[4pt]
    u(x,0)=u_0(x),
        &x\in\M,
    \end{cases}
    \label{eq:dirichlet-parabolic-problem}
\end{equation}
with $u_0=\psi(\cdot,0)$ on $\partial\M$.

Let $X=(X_t)_{t\ge0}$ denote the manifold diffusion with infinitesimal generator $\Lop$, started from $X_0=x$. By the Feynman--Kac representation for diffusions on Riemannian manifolds~\cite{Hsu2002}, for a fixed evaluation time $t\in(0,T]$, the closed problem \eqref{eq:closed-parabolic-problem} admits the representation
\begin{equation}
    u(x,t)=\E_x\Big[u_0(X_t)+\int_0^t f(X_s,t-s)\dd s\Big].
    \label{eq:closed-feynman-kac}
\end{equation}
For the boundary problem, define the first hitting time $\tau_{\partial}=\inf\{s>0:X_s\in\partial\M\}$, then the problem \eqref{eq:dirichlet-parabolic-problem} has the representation
\begin{equation}
    \begin{aligned}
    u(x,t)=\E_x\Big[
        &u_0(X_t)\1_{\{\tau_{\partial}>t\}}+\psi(X_{\tau_{\partial}},t-\tau_{\partial})\1_{\{\tau_{\partial}\le t\}}+\int_0^{t\wedge\tau_{\partial}}f(X_s,t-s)\dd s
    \Big].
    \end{aligned}
    \label{eq:boundary-feynman-kac}
\end{equation}

\subsection{The p-Ret discretization in parameter coordinates}
\label{subsec:coordinate-pret}


Let
\begin{equation*}
    g_{ij}(\xi)=\big\langle \partial_i\Phi(\xi),\partial_j\Phi(\xi)\big\rangle,
    \quad
    \big(g^{ij}(\xi)\big)_{i,j=1}^m=\big(g_{ij}(\xi)\big)_{i,j=1}^m{}^{-1}.
\end{equation*}
The corresponding Christoffel symbols are
\begin{equation*}
    \Gamma_{ij}^{\ell}(\xi)=\frac12\sum_{k=1}^m g^{\ell k}(\xi)\Big(\partial_i g_{jk}(\xi)+\partial_j g_{ik}(\xi)-\partial_k g_{ij}(\xi)\Big).
\end{equation*}
Assume that the diffusion tensor admits a smooth factorization
\begin{equation*}
    A=\sum_{q=1}^{r}\sigma_q\otimes\sigma_q,
    \quad
    Av=\sum_{q=1}^{r}\langle\sigma_q,v\rangle_g\,\sigma_q,
\end{equation*}
where $r$ is the dimension of the driving noise and need not equal the manifold dimension $m$. Writing, for $x=\Phi(\xi)$,
\begin{equation*}
    \sigma_q(x)
    =
    \sum_{i=1}^m\sigma_q^i(\xi)\,\partial_i\Phi(\xi),
    \quad
    b(x)
    =
    \sum_{i=1}^m b^i(\xi)\,\partial_i\Phi(\xi),
\end{equation*}
define
\begin{equation*}
    \Sigma(\xi)
    =
    \big(\sigma_q^i(\xi)\big)_{1\le i\le m,\,1\le q\le r},
    \quad
    a(\xi)
    =
    \Sigma(\xi)\Sigma(\xi)^T
    =
    \big(a^{ij}(\xi)\big)_{i,j=1}^m.
\end{equation*}

For $\widetilde\varphi=\varphi\circ\Phi$, the operator \eqref{eq:intrinsic-operator} takes the coordinate form
\begin{equation}
    \big(\Lop\varphi\big)\big(\Phi(\xi)\big)=\frac12\sum_{i,j=1}^m a^{ij}(\xi)\Big(
        \partial_{ij}\widetilde\varphi(\xi)-\sum_{\ell=1}^m\Gamma_{ij}^{\ell}(\xi)\partial_{\ell}\widetilde\varphi(\xi)
    \Big)+\sum_{\ell=1}^mb^{\ell}(\xi)\partial_{\ell}\widetilde\varphi(\xi).
    \label{eq:coordinate-form-operator}
\end{equation}
To construct a p-Ret discretization consistent with the coordinate generator \eqref{eq:coordinate-form-operator}, we use the local parameterization-based second-order retraction, whose quadratic correction is expressed in terms of the Christoffel symbols.

\begin{definition}[{\cite[Section~2.1]{SchwarzEtAl2025}}]
\label{def:quadratic-parametric-retraction}
For $x=\Phi(\xi)$ and $w=D\Phi(\xi)v\in T_x\M$, with $v\in\R^m$ sufficiently small, define the p-Ret map by
\begin{equation}
    \pRet_x(w)=\Phi\Big(\Big(\xi^\ell+v^\ell-\frac12\sum_{i,j=1}^m\Gamma_{ij}^\ell(\xi)v^iv^j\Big)_{\ell=1}^m\Big).
    \label{eq:quadratic-parametric-retraction}
\end{equation}
\end{definition}

By~\cite[Lemma~2.5]{SchwarzEtAl2025}, $\pRet$ is a second-order retraction with respect to $g$. The resulting p-Ret discretization is given in the following lemma.

\begin{lemma}
\label{lem:pret-discretization}
Let $h=T/K$, $t_k=kh$, and let $Z_k$, $k=1,\ldots,K$, be independent random vectors uniformly distributed on $\mathbb S^{r-1}$. For the operator \eqref{eq:coordinate-form-operator}, define
\begin{equation}
    V_h(\xi,Z)=h\,b(\xi)+\sqrt{rh}\,\Sigma(\xi)Z,
    \label{eq:pret-random-increment}
\end{equation}
where $b(\xi)=\big(b^1(\xi),\ldots,b^m(\xi)\big)^T$ and $a(\xi)=\Sigma(\xi)\Sigma(\xi)^T=\big(a^{ij}(\xi)\big)_{i,j=1}^m,$. Define the one-step coordinate map $\Psi_h$ componentwise by
\begin{equation*}
    \big(\Psi_h(\xi,Z)\big)^\ell=\xi^\ell+V_h^\ell(\xi,Z)-\frac12\sum_{i,j=1}^m\Gamma_{ij}^\ell(\xi)V_h^i(\xi,Z)V_h^j(\xi,Z),
    \quad
    \ell=1,\ldots,m.
\end{equation*}
Then the p-Ret discretization is given by
\begin{equation}
    \xi_{k+1}=\Psi_h(\xi_k,Z_{k+1}),
    \quad
    X_{k+1}^h=\Phi(\xi_{k+1}).
    \label{eq:one-step-pret-transition}
\end{equation}
\end{lemma}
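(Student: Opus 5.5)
This lemma is essentially a definition; the content to verify is that the transition \eqref{eq:one-step-pret-transition} is the p-Ret map of Definition~\ref{def:quadratic-parametric-retraction} applied to a random tangent increment. It should also be consistent, to the relevant order in $h$, with the coordinate generator \eqref{eq:coordinate-form-operator}. The plan is as follows.

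\emph{Identification with the retraction.} Set $w_k=D\Phi(\xi_k)V_h(\xi_k,Z_{k+1})\in T_{X_k^h}\M$. Comparing the componentwise formula for $\Psi_h$ with \eqref{eq:quadratic-parametric-retraction}, with $v=V_h$, gives $X_{k+1}^h=\pRet_{X_k^h}(w_k)$. So each step is a retraction step, and by \cite[Lemma~2.5]{SchwarzEtAl2025} it is second order. For small $h$, $v$ is small, uniformly in $Z$, since $|Z|=1$ and $\Sigma$, $b$ are bounded on the compact parameter closure. Hence the map is well defined.

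\emph{Moments of the increment.} For $Z$ uniform on $\mathbb S^{r-1}$ we have $\E Z=0$, $\E ZZ^T=r^{-1}I_r$, and all odd moments vanish by symmetry. Consequently
\[
\E V_h=hb,\qquad \E V_hV_h^T=h\,a+O(h^2),\qquad \E|V_h|^3=O(h^{3/2}),
\]
where the factor $r$ in $\sqrt{rh}$ cancels the $r^{-1}$. A Taylor expansion of $\widetilde\varphi\circ\Psi_h$ about $\xi$ to third order then gives
\[
\E\widetilde\varphi(\Psi_h(\xi,Z))-\widetilde\varphi(\xi)=h\,(\Lop\varphi)(\Phi(\xi))+O(h^{3/2}).
\]
In this expansion the first-order term picks up $b^\ell-\tfrac12\sum_{i,j} a^{ij}\Gamma^\ell_{ij}$, coming from the drift and the quadratic Christoffel correction. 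The second-order term gives $\tfrac12\sum_{i,j} a^{ij}\partial_{ij}\widetilde\varphi$. Together these match \eqref{eq:coordinate-form-operator}.

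\emph{Main obstacle.} The delicate part is making the remainder uniform. This needs bounded derivatives of $\Gamma$, $\Sigma$, $b$ and $\widetilde\varphi$ on the compact parameter domain, together with periodic identifications so that coordinates stay in $U$. For higher accuracy one also uses that the odd third moment of $Z$ vanishes, which improves the remainder to $O(h^2)$.
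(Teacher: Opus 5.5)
Your proposal is correct and follows the paper's argument: the paper also uses $\E[Z]=0$ and $\E[ZZ^T]=\frac1r I_r$ to show that $V_h$ has mean $hb$ and covariance $h\Sigma\Sigma^T$, and then identifies $\Psi_h$ as the p-Ret map \eqref{eq:quadratic-parametric-retraction} applied to this increment. Your Taylor-expansion step, including the improvement to $O(h^2)$ from the vanishing odd moments, is what the paper proves separately in Lemma~\ref{lem:discrete-generator-consistency}; it is correct but more than this lemma requires.
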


\begin{proof}

Since $Z$ is uniformly distributed on $\mathbb S^{r-1}$, we have $\E[Z]=0$ and $\E[ZZ^T]=\frac1r I_r$. Hence
\begin{equation*}
    \E\big[V_h(\xi,Z)\big]=h\,b(\xi),
    \quad
    \E\Big[\big(V_h(\xi,Z)-h\,b(\xi)\big)\big(V_h(\xi,Z)-h\,b(\xi)\big)^T\Big]=h\Sigma(\xi)\Sigma(\xi)^T.
\end{equation*}
Thus $V_h$ has the drift and covariance required by \eqref{eq:coordinate-form-operator}. Applying the second-order retraction \eqref{eq:quadratic-parametric-retraction} to this increment gives \eqref{eq:one-step-pret-transition}.
\end{proof}

After each step, the parameter representative is adjusted according to the prescribed identifications. The p-Ret update \eqref{eq:one-step-pret-transition} defines a time-homogeneous Markov chain $\{X_k^h\}_{k\ge0}$. For an interior step, before applying the numerical boundary-exit procedure, define the one-step transition operator by
\begin{equation}
    \big(\mathcal P_h\varphi\big)(x)=\E\big[\varphi(X_{k+1}^h)\,\bigm|\,X_k^h=x\big].
    \label{eq:one-step-transition-operator}
\end{equation}
For $x=\Phi(\xi)$, equivalently,
\begin{equation}
    \big(\mathcal P_h\varphi\big)\big(\Phi(\xi)\big)=\E\big[\varphi\Big(\Phi\big(\Psi_h(\xi,Z)\big)\Big)\big]=\E\big[\widetilde\varphi\big(\Psi_h(\xi,Z)\big)\big].
    \label{eq:coordinate-one-step-transition}
\end{equation}

The following lemma establishes the local weak consistency of the one-step operator $\mathcal P_h$ with the generator $\Lop$.

\begin{lemma}
\label{lem:discrete-generator-consistency}
Let $U_0$ be compactly contained in $U$ and let $\varphi\in C^5(\M)$. Then, uniformly for $\xi\in U_0$,
\begin{equation}
    \frac{\big(\mathcal P_h\varphi\big)\big(\Phi(\xi)\big)-\varphi\big(\Phi(\xi)\big)}{h}=\big(\Lop\varphi\big)\big(\Phi(\xi)\big)+O(h),
    \quad
    h\to0.
    \label{eq:discrete-generator-consistency}
\end{equation}
\end{lemma}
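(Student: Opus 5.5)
We work entirely in parameter coordinates through \eqref{eq:coordinate-one-step-transition}. Set $\widetilde\varphi=\varphi\circ\Phi\in C^5$ near $\overline{U_0}$, and write the one-step displacement
\begin{equation*}
    \Delta^\ell:=\Psi_h^\ell(\xi,Z)-\xi^\ell=V_h^\ell-\tfrac12\sum_{i,j}\Gamma_{ij}^\ell(\xi)V_h^iV_h^j .
\end{equation*}
Because $Z$ lies on $\mathbb S^{r-1}$, we have $|V_h|\le Ch+C\sqrt{rh}\le C'\sqrt h$ with a constant that is uniform for $\xi\in U_0$; the coefficients $b,\Sigma,\Gamma$ are smooth on a neighborhood of $\overline{U_0}$, which is compact. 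Consequently $|\Delta|\le C\sqrt h$ deterministically. For $h$ small, the point $\Psi_h(\xi,Z)$ therefore stays inside a fixed compact neighborhood $U_1$ with $U_0\Subset U_1\Subset U$. This is the only place where the restriction to $U_0$ enters, and it lets us Taylor expand without worrying about parameter identifications.

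Taylor expand $\widetilde\varphi(\xi+\Delta)$ to fourth order with a fifth-order remainder:
\begin{equation*}
    \widetilde\varphi(\xi+\Delta)-\widetilde\varphi(\xi)=\sum_\ell\partial_\ell\widetilde\varphi\,\Delta^\ell+\tfrac12\sum_{i,j}\partial_{ij}\widetilde\varphi\,\Delta^i\Delta^j+\tfrac16\,\partial^3\widetilde\varphi[\Delta^{\otimes3}]+\tfrac1{24}\,\partial^4\widetilde\varphi[\Delta^{\otimes4}]+R_5 .
\end{equation*}
Here $|R_5|\le C\|\widetilde\varphi\|_{C^5(U_1)}|\Delta|^5=O(h^{5/2})$. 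The fourth-order term is $O(h^2)$ deterministically, so after dividing by $h$ it contributes $O(h)$. The real work is to compute the expectations of the first three terms up to $O(h^2)$, using the moments of $Z$. By symmetry $Z\mapsto -Z$, all odd moments of $Z$ vanish. We also have $\E[ZZ^T]=\frac1r I_r$, and the fourth moments of $Z$ are bounded.

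Write $V_h=hb+\sqrt h\,W$ with $W=\sqrt r\,\Sigma Z$, so that $\E W=0$, $\E[WW^T]=a$ and $\E[W^{\otimes3}]=0$. The terms are then handled as follows.
\begin{itemize}
\item First-order term. $\E\Delta^\ell=hb^\ell-\frac12\sum\Gamma^\ell_{ij}\,\E[V^iV^j]$, and $\E[V^iV^j]=h\,a^{ij}+h^2b^ib^j$. Hence $\E\Delta^\ell=h\bigl(b^\ell-\frac12\sum\Gamma^\ell_{ij}a^{ij}\bigr)+O(h^2)$.
\item Second-order term. $\E[\Delta^i\Delta^j]=\E[V^iV^j]-\E\bigl[V^i\,\Gamma^j(V,V)\bigr]+O(h^2)$, where the cross term is cubic in $V$. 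Its $h^{3/2}$ part is $\E[W^i\,\Gamma^j(W,W)]=0$ by oddness, and what remains is $O(h^2)$. So $\E[\Delta^i\Delta^j]=h\,a^{ij}+O(h^2)$.
\item Third-order term. $\E[\Delta^{\otimes3}]$ has leading part $h^{3/2}\E[W^{\otimes3}]=0$, and all remaining pieces are $O(h^2)$.
\end{itemize}
Collecting these and dividing by $h$ gives
\begin{equation*}
    \tfrac12\sum a^{ij}\bigl(\partial_{ij}\widetilde\varphi-\textstyle\sum_\ell\Gamma^\ell_{ij}\partial_\ell\widetilde\varphi\bigr)+\sum b^\ell\partial_\ell\widetilde\varphi+O(h).
\end{equation*}
This is exactly the coordinate form \eqref{eq:coordinate-form-operator} of $\Lop\varphi$.

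The main obstacle is the bookkeeping in the second and third steps. One must track that every half-integer power $h^{3/2}$ arises only from odd moments of $Z$, hence vanishes, and that the sub-leading pieces are genuinely $O(h^2)$ rather than $O(h^{3/2})$. The $O(h)$ rate hinges on this symmetry cancellation. All constants depend only on $\|\widetilde\varphi\|_{C^5(U_1)}$ and on bounds for $b,\Sigma,\Gamma$ and their values on $U_1$, which is what makes the estimate uniform in $\xi\in U_0$.
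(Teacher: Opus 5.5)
Your proposal is correct and follows the same route as the paper: you Taylor-expand $\widetilde\varphi(\xi+\Delta)$ about $\xi$ and evaluate the moments of $\Delta$ using $\E[Z]=0$, $\E[ZZ^T]=\frac1r I_r$ and the vanishing of the odd moments of $Z$. This gives $\E[\Delta^\ell]=h\bigl(b^\ell-\frac12\sum_{i,j}a^{ij}\Gamma^\ell_{ij}\bigr)+O(h^2)$, $\E[\Delta^i\Delta^j]=ha^{ij}+O(h^2)$, and higher moments of order $O(h^2)$. Your deterministic bound $|\Delta|\le C\sqrt h$, which comes from $Z\in\mathbb S^{r-1}$, usefully makes explicit the uniformity and remainder control that the paper leaves implicit; the only slip is cosmetic, since the cubic cross term in $\Delta^i\Delta^j$ should read $-\frac12\bigl(V^i\Gamma^j(V,V)+V^j\Gamma^i(V,V)\bigr)$, which vanishes at order $h^{3/2}$ all the same.
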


\begin{proof}
Set $V=V_h(\xi,Z)$ and $\Delta=\Psi_h(\xi,Z)-\xi$. By the definitions of $\Psi_h$ in \eqref{eq:one-step-pret-transition}, we have $\Delta^\ell=V^\ell-\frac12\sum_{i,j=1}^m\Gamma_{ij}^{\ell}(\xi)V^iV^j$. Since $Z$ is uniformly distributed on $\mathbb S^{r-1}$, we have $\E[Z]=0$, $\E[ZZ^T]=\frac1r I_r$, and all odd moments of $Z$ vanish. It follows that
\begin{equation*}
    \E[\Delta^\ell]=h\Big(b^\ell-\frac12\sum_{i,j=1}^m a^{ij}\Gamma_{ij}^{\ell}\Big)+O(h^2)
\end{equation*}
and $\E[\Delta^i\Delta^j]=h\,a^{ij}+O(h^2)$, where all coefficients are evaluated at $\xi$. Moreover, the third- and fourth-order moments of $\Delta$ that occur in the Taylor expansion are $O(h^2)$.

Since $\big(\mathcal P_h\varphi\big)\big(\Phi(\xi)\big)=\E\big[\widetilde\varphi(\xi+\Delta)\big]$, expanding $\widetilde\varphi(\xi+\Delta)$ about $\xi$ and taking expectations therefore gives
\begin{eqnarray*}
    &&\big(\mathcal P_h\varphi\big)\big(\Phi(\xi)\big)=\widetilde\varphi(\xi)+h\sum_{\ell=1}^m\Big(b^\ell-\frac12\sum_{i,j=1}^m a^{ij}\Gamma_{ij}^{\ell}\Big)\partial_\ell\widetilde\varphi(\xi)\\
    &&\hspace{36pt}
    +\frac h2\sum_{i,j=1}^m a^{ij}\partial_{ij}\widetilde\varphi(\xi)+O(h^2).
\end{eqnarray*}
The coefficient of $h$ is precisely $\big(\Lop\varphi\big)\big(\Phi(\xi)\big)$ by \eqref{eq:coordinate-form-operator}, which proves the result.
\end{proof}

\subsection{Efficient algorithms and implementation}
\label{sec:efficient-algorithms}
The p-Ret transition \eqref{eq:one-step-pret-transition} provides the common discrete dynamics underlying the MC and direct RQMC methods considered in this section. We first formulate the corresponding discrete Feynman--Kac functionals, independently of the sampling rule, and then specify the MC and direct RQMC constructions through their respective choices of random inputs.

\subsubsection{Discrete Feynman--Kac functionals}
\label{subsec:discrete-path-functionals}
In this subsection, we evaluate the solution at $T$ using the discrete p-Ret path \eqref{eq:one-step-pret-transition} with the periodic wrapping convention of Subsection~\ref{subsec:coordinate-pret}.

\underline{For the closed problem \eqref{eq:closed-parabolic-problem}}, discretizing the Feynman--Kac representation \eqref{eq:closed-feynman-kac} yields the path functional
\begin{equation*}
    S_h^{\cl}=u_0(X_K)+h\sum_{k=0}^{K-1}f(X_k,T-t_k).
\end{equation*}

\underline{For the Dirichlet problem \eqref{eq:dirichlet-parabolic-problem}}, we discretize the stopped Feynman--Kac representation \eqref{eq:boundary-feynman-kac}. For the numerical localization of the first boundary crossing, let $\chi$ be a smooth defining function for the physical boundary of $U$, defined on an open neighborhood $W$ containing all relevant p-Ret interpolation paths, with $\chi>0$ in $U\cap W$ and $\chi<0$ in $W\setminus\overline U$. If the candidate $\Psi_h(\xi_k,Z_{k+1})$ leaves $U$, define the p-Ret interpolation $\gamma_k:[0,1]\to\R^m$ componentwise by
\begin{equation*}
    \big(\gamma_k(\lambda)\big)^\ell=\xi_k^\ell+\lambda V_h^\ell(\xi_k,Z_{k+1})-\frac{\lambda^2}{2}\sum_{i,j=1}^m\Gamma_{ij}^\ell(\xi_k)V_h^i(\xi_k,Z_{k+1})V_h^j(\xi_k,Z_{k+1}),
\end{equation*}
for $\ell=1,\ldots,m$, $0\le\lambda\le1$, and let
\begin{equation}
    \lambda_k=\inf\big\{\lambda\in(0,1]:\chi\big(\gamma_k(\lambda)\big)\le0\big\}.
    \label{eq:numerical-boundary-fraction}
\end{equation}
The numerical exit time and boundary point are $\widehat\tau_h=t_k+\lambda_kh$ and $\widehat X_{\partial}=\Phi\big(\gamma_k(\lambda_k)\big)$. 

Define the accumulated source term by
\begin{equation*}
    I_k=h\sum_{\ell=0}^{k-1}f(X_\ell,T-t_\ell),
    \quad
    I_0=0.
\end{equation*}
With the convention $\widehat\tau_h=+\infty$ if no numerical exit occurs before $T$, the discrete functional is
\begin{equation*}
    S_h^{\bd}=
    \begin{cases}
    u_0(X_K)+I_K,
        & \widehat\tau_h>T,\\[2mm]
    \psi\big(\widehat X_{\partial},T-\widehat\tau_h\big)+I_k+\lambda_kh\,f(X_k,T-t_k),
        & \widehat\tau_h=t_k+\lambda_kh\le T.
    \end{cases}
\end{equation*}
Here, $k$ denotes the index of the first step for which the p-Ret candidate leaves $U$, and the empty sum is understood as zero when $k=0$. The last term accounts for the source contribution over the partial exit step, whose crossing is localized along the p-Ret interpolation by \eqref{eq:numerical-boundary-fraction}.

For $\star\in\{\cl,\bd\}$, the corresponding time-discrete
approximation is defined by
\begin{equation}\label{eq:discrete-solution}
    u_h^\star(x,T)=\E_x\big[S_h^\star\big].
\end{equation}
The functionals $S_h^{\cl}$ and $S_h^{\bd}$ are common to all the sampling methods considered below. 

\subsubsection{Monte Carlo and direct RQMC sampling}
\label{subsubsec:sampling-constructions}

Throughout the remainder of the paper, $\mathcal D:[0,1)^{r-1}\rightarrow\mathbb S^{r-1}$ denotes the recursive inverse-beta transformation, so that $\mathcal D(U)\sim\operatorname{Unif}(\mathbb S^{r-1})$ whenever $U\sim\operatorname{Unif}([0,1)^{r-1})$. Thus, each p-Ret step uses exactly $r-1$ uniform input variables.

\begin{enumerate}[
    label=\textbf{\textup{(\roman*)}},
    leftmargin=2.5em,
    labelsep=0.8em,
    itemsep=0.3em,
    topsep=0.3em
]

\item \textbf{Monte Carlo sampling.}
For each path $i=1,\ldots,n$ and time step $k=1,\ldots,K$, generate independent random vectors
\begin{equation*}
    U_k^{(i)}\sim\operatorname{Unif}([0,1)^{r-1}),
    \quad
    Z_k^{(i)}=\mathcal D\big(U_k^{(i)}\big).
\end{equation*}

\item \textbf{Direct randomized quasi-Monte Carlo sampling.}
Set $s=(r-1)K$, the nominal integration dimension of direct RQMC, and let $n=2^\ell$. We use the first $n$ points of a Sobol' sequence in dimension $s$, which form a digital $(\mathfrak t,\ell,s)$-net in base $2$. Smaller $\mathfrak t$ corresponds to finer equidistribution, and $\mathfrak t$ is kept fixed as $\ell$ increases. We use nested uniform scrambling, which preserves the base-$2$ digital-net property almost surely and makes every scrambled point marginally uniform on $[0,1)^s$; see \cite{Owen1997}.

Generate a nested-uniformly scrambled Sobol' point set
\begin{equation*}
Q_n=\big\{z^{(1)},\ldots,z^{(n)}\big\}\subset[0,1)^s.
\end{equation*}
For each $i$, partition $z^{(i)}=\big(z_1^{(i)},\ldots,z_K^{(i)}\big)$, with $z_k^{(i)}\in[0,1)^{r-1}$, and define
\begin{equation*}
    Z_k^{(i)}=\mathcal D\big(z_k^{(i)}\big),
    \quad
    k=1,\ldots,K.
\end{equation*}
Each $s$-dimensional point generates one complete p-Ret path; unused blocks are discarded after a boundary exit. The nominal dimension $s=(r-1)K$ increases as $h$ decreases. In the boundary case, the associated integrand may also be discontinuous across interfaces where the numerical exit step changes. This issue is examined in the subsequent error analysis.

\end{enumerate}

\begin{remark}
Although the analysis below focuses on nested-uniformly scrambled Sobol' point sets, the sampling construction can also be implemented with scrambled digital nets in any integer base $\mathfrak b\ge2$ and with randomized lattice rules. Corresponding variance results require the randomization, projection properties, and integrand regularity appropriate to the chosen point set.
\end{remark}

\subsubsection{Estimators and implementation}
\label{subsubsec:estimators}
Let $M\in\{\MC,\allowbreak\RQMC\}$ denote the sampling method, and let $\star\in\{\cl,\bd\}$ indicate whether the problem is closed or subject to Dirichlet boundary conditions. For each method, let $S_h^{M,\star,(i)}$, $i=1,\ldots,n$, be the discrete functional generated from the common p-Ret dynamics. The corresponding estimator is
\begin{equation}
    \widehat u_{n,h}^{M,\star}(x,T)=\frac1n\sum_{i=1}^nS_h^{M,\star,(i)}.
    \label{eq:unified-estimator}
\end{equation}
The superscript $M$ records only the sampling construction; the underlying discrete functional is the same $S_h^\star$ defined above.

Algorithm~\ref{alg:mc-direct-pret-estimator} summarizes one randomized replication of the MC and direct RQMC estimators. Their only difference lies in the generation of the uniform inputs $z_k^{(i)}$.
For direct RQMC, each particle retains the blocks of its original $(r-1)K$-dimensional point throughout the simulation, and the unused blocks are discarded after a numerical exit.
\begin{algorithm}[t]
\caption{MC and direct RQMC p-Ret estimators}
\label{alg:mc-direct-pret-estimator}
\begin{algorithmic}[1]

\Require Initial coordinate $\xi_0$, particle number $n$, number of steps $K$, method $M\in\{\MC,\RQMC\}$, and problem type $\star\in\{\cl,\bd\}$.
\Ensure $\widehat u_{n,h}^{M,\star}(x,T)$.

\State Initialize $\xi_0^{(i)}=\xi_0$, $I_0^{(i)}=0$, $i=1,\ldots,n$, and mark all particles active.

\If{$M=\RQMC$}
    \State Generate the RQMC point set $Q_n=\{z^{(1)},\ldots,z^{(n)}\}\subset[0,1)^{(r-1)K}$ and write
    $z^{(i)}=(z_1^{(i)},\ldots,z_K^{(i)})$, with $z_k^{(i)}\in[0,1)^{r-1}$.
\EndIf

\For{$k=0,\ldots,K-1$}
    \State Let $\mathcal I_k$ denote the set of active particle indices.
    \If{$\mathcal I_k=\varnothing$}
        \State \textbf{break}
    \EndIf

    \If{$M=\MC$}
        \State Generate independent $z_{k+1}^{(i)}\sim\operatorname{Unif}([0,1)^{r-1})$, $i\in\mathcal I_k$.
    \EndIf

    \For{each $i\in\mathcal I_k$}
        \State Set $Z_{k+1}^{(i)}=\mathcal D(z_{k+1}^{(i)})$ and compute the p-Ret candidate
        \begin{equation*}
            \xi_{k+1}^{\mathrm{cand},(i)}=\Psi_h\big(\xi_k^{(i)},Z_{k+1}^{(i)}\big).
        \end{equation*}
        \State Apply the prescribed periodic parameter identifications to $\xi_{k+1}^{\mathrm{cand},(i)}$.

        \If{$\star=\bd$ and $\xi_{k+1}^{\mathrm{cand},(i)}\notin U$}
            \State Compute $\lambda_k^{(i)}$ from \eqref{eq:numerical-boundary-fraction}, and set $S_h^{M,\bd,(i)}$ according to the stopped functional and mark particle $i$ inactive.
        \Else
            \State Set $\xi_{k+1}^{(i)}=\xi_{k+1}^{\mathrm{cand},(i)}$ and update $I_{k+1}^{(i)}$.
        \EndIf
    \EndFor
\EndFor

\For{each particle $i$ still active at time $T$}
    \State Set $S_h^{M,\star,(i)}=u_0\big(\Phi(\xi_K^{(i)})\big)+I_K^{(i)}$.
\EndFor

\State \Return $\widehat u_{n,h}^{M,\star}(x,T)$ from \eqref{eq:unified-estimator}.

\end{algorithmic}
\end{algorithm}


\section{Error estimates for temporal discretization and path-space sampling}
\label{sec:error-estimates}
In this section, we analyze the temporal error of the p-Ret discretization, establish the MC reference bound, and derive the sampling error estimates for direct RQMC.

\subsection{Error decomposition and preliminary estimates}
\label{subsec:preliminary-error-estimates}

We begin by introducing the total error decomposition and collecting the assumptions used throughout the analysis. We use the unified notation $u^\star$, $\star\in\{\cl,\bd\}$, for the exact solution, where $u^\cl$ and $u^\bd$ correspond to the closed and Dirichlet problems, respectively. 

Recalling the time-discrete approximation defined in \eqref{eq:discrete-solution}, for $M\in\{\MC,\allowbreak\RQMC\}$ and $\star\in\{\cl,\bd\}$, we decompose the total error as
\begin{equation}
    \widehat u_{n,h}^{M,\star}(x,T)-u^\star(x,T)=\widehat u_{n,h}^{M,\star}(x,T)-u_h^\star(x,T)+u_h^\star(x,T)-u^\star(x,T).
    \label{eq:total-error-decomposition}
\end{equation}
The temporal weak error is common to both sampling methods, whereas the sampling error depends on the sampling construction.
For the Dirichlet problem, the temporal error additionally contains the error caused by the numerical boundary localization, which is discussed in Remark~\ref{rem:dirichlet-temporal-error}.

The subsequent analysis is carried out under the following standing
assumption.

\begin{assumption}\label{ass:preliminary-error}
Suppose that the assumptions in Subsections~\ref{subsec:setting-fk} and \ref{subsec:coordinate-pret} hold. In addition, assume that the coordinate drift and diffusion coefficients are globally Lipschitz. Namely, there exist constants $L_b,L_\Sigma>0$ such that
\begin{align}
    \lVert b(\xi)-b(\eta)\rVert
    &\le L_b\lVert\xi-\eta\rVert,
    \quad
    \xi,\eta\in\overline U,
    \label{eq:drift-lipschitz}\\
    \lVert\Sigma(\xi)-\Sigma(\eta)\rVert_{\mathrm F}
    &\le L_\Sigma\lVert\xi-\eta\rVert,
    \quad
    \xi,\eta\in\overline U.
    \label{eq:diffusion-lipschitz}
\end{align}
Here, $\lVert\cdot\rVert_{\mathrm F}$ denotes the Frobenius norm.
\end{assumption}

\begin{assumption}\label{ass:bounded-data}
There exist constants $M_0,M_f>0$ such that
\begin{equation}
    |u_0(x)|\le M_0,
    \quad
    |f(x,t)|\le M_f,
    \quad
    x\in\M,\quad t\in[0,T].
    \label{eq:bounded-initial-source-data}
\end{equation}
For the Dirichlet problem, assume further that there exists a constant $M_\psi>0$ such that 
\begin{equation}
    |\psi(x,t)|\le M_\psi,
    \quad
    x\in\partial\M,\quad t\in[0,T].
    \label{eq:bounded-boundary-data}
\end{equation}
\end{assumption}

We first record the weak temporal convergence of the p-Ret
discretization on closed manifolds.

\begin{lemma}
\label{lem:weak-temporal-convergence}
Suppose that Assumption~\ref{ass:preliminary-error} holds. Assume further that $u^\cl$ is twice continuously differentiable in time, that $u^\cl(\cdot,t)\in C^5(\M)$ uniformly for $t\in[0,T]$, and that $f\in C^1\big([0,T];C(\mathcal M)\big)$, with all the corresponding derivatives bounded. Then it holds that
\begin{equation}
    \|u_h^\cl(\cdot,T)-u^\cl(\cdot,T)\|_{L^\infty(\M)}\le Ch,
    \label{eq:weak-temporal-convergence}
\end{equation}
where $C$ is a positive constant independent of $h$.
\end{lemma}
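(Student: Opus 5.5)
We follow the standard Talay--Tubaro telescoping argument, built on the local consistency result of Lemma~\ref{lem:discrete-generator-consistency}. Fix $x=\Phi(\xi_0)$. For $k=0,\ldots,K$, set $v_k(y)=u^\cl(y,T-t_k)$, so that $v_K=u_0$ and $v_0(x)=u^\cl(x,T)$. Since $u_h^\cl(x,T)=\E_x\big[u_0(X_K)+h\sum_{k=0}^{K-1}f(X_k,T-t_k)\big]$, the telescoping identity gives
\begin{equation*}
u_h^\cl(x,T)-u^\cl(x,T)=\sum_{k=0}^{K-1}\E_x\Big[\big(\mathcal P_h v_{k+1}\big)(X_k)-v_k(X_k)+h\,f(X_k,T-t_k)\Big].
\end{equation*}
This uses the Markov property of the p-Ret chain, namely $\E_x[v_{k+1}(X_{k+1})\mid X_k]=(\mathcal P_h v_{k+1})(X_k)$, together with the tower property. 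It therefore suffices to show that the local defect
\begin{equation*}
\delta_k(y)=(\mathcal P_h v_{k+1})(y)-v_k(y)+h f(y,T-t_k)
\end{equation*}
is $O(h^2)$ uniformly in $y\in\M$ and $k$. Summing $K=T/h$ such terms then yields the bound $Ch$.

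To bound $\delta_k$, split it as
\begin{equation*}
\delta_k=\big[(\mathcal P_h v_{k+1})-v_{k+1}-h\Lop v_{k+1}\big]+\big[v_{k+1}-v_k+h\Lop v_{k+1}+hf(\cdot,T-t_k)\big].
\end{equation*}
The first bracket is $O(h^2)$ by Lemma~\ref{lem:discrete-generator-consistency}, applied to $\varphi=v_{k+1}$. The constant depends only on $C^5$ bounds of $v_{k+1}$, which are uniform in $k$ by hypothesis. For the second bracket, write $s=T-t_k$, so that $v_k=u(\cdot,s)$ and $v_{k+1}=u(\cdot,s-h)$. A Taylor expansion in time gives
\begin{equation*}
u(\cdot,s)-u(\cdot,s-h)=h\,\partial_tu(\cdot,s-h)+O(h^2),
\end{equation*}
using the bounded $\partial_t^2u$. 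The PDE then gives $\partial_tu(\cdot,s-h)=\Lop u(\cdot,s-h)+f(\cdot,s-h)$. The bracket therefore reduces to $h\big(f(\cdot,s)-f(\cdot,s-h)\big)+O(h^2)$, which is $O(h^2)$ because $f\in C^1([0,T];C(\M))$.

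The main obstacle is uniformity of the consistency estimate. Lemma~\ref{lem:discrete-generator-consistency} is stated only for $\xi$ in a compact $U_0\Subset U$, whereas the chain visits all of $\M$. Since $\M$ is closed, the plan is to use the periodic identifications and compactness to cover the relevant parameter set by finitely many compact pieces, on each of which the Taylor expansion of $\widetilde\varphi\circ\Psi_h$ holds with uniform constants. Two facts make this possible: the increment satisfies $|V_h|\le C\sqrt h$ deterministically because $Z\in\mathbb S^{r-1}$, and $\Gamma$, $b$, $\Sigma$ are bounded and Lipschitz by Assumption~\ref{ass:preliminary-error}. Consequently, for $h$ small the step $\Psi_h(\xi,Z)$ stays in a fixed compact neighborhood, and the fifth-order Taylor remainder is controlled by $\|u(\cdot,t)\|_{C^5}$ times $|\Delta|^5$ plus lower-order terms, giving an $O(h^2)$ remainder uniformly. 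Values of $h$ not below the threshold are handled trivially by the bounded data of Assumption~\ref{ass:bounded-data}, after enlarging $C$.
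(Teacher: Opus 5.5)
Your proposal is correct and follows the paper's argument. Your telescoping identity $u_h^\cl-u^\cl=\sum_k\E_x[\delta_k(X_k)]$ is the unrolled form of the paper's backward recursion $e_k\le e_{k+1}+Ch^2$, which rests on the sup-norm nonexpansiveness of $\mathcal P_h$. Your split of $\delta_k$ into the consistency defect, the time-Taylor defect, and $h\big(f(\cdot,T-t_k)-f(\cdot,T-t_{k+1})\big)$ is the same three-term decomposition the paper uses.

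Your uniformity discussion (the deterministic bound $|V_h|\le C\sqrt h$ and the compact covering of the identified parameter domain) makes explicit what the paper dispatches with ``periodic extension''. One small point: for the large-$h$ case you do not need Assumption~\ref{ass:bounded-data}, which is not among the lemma's hypotheses. The functions $u_0$ and $f$ are already bounded by continuity on the compact set $\M\times[0,T]$.
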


\begin{proof}
For $k=0,\ldots,K$, set $v_k^\cl=u^\cl(\cdot,T-t_k)$, and let $v_{h,k}^\cl$ denote the corresponding time-discrete backward value function. Then
\begin{equation*}
    v_{h,K}^\cl=u_0,
    \quad
    v_{h,k}^\cl=\mathcal P_hv_{h,k+1}^\cl+h f(\cdot,T-t_k),
    \quad
    k=K-1,\ldots,0.
\end{equation*}
By the definition of the discrete Feynman--Kac functional, $v_{h,0}^\cl=u_h^\cl(\cdot,T)$, whereas $v_0^\cl=u^\cl(\cdot,T)$.

Applying Lemma~\ref{lem:discrete-generator-consistency} to $v_{k+1}^\cl$ and using periodic extension in the closed case, we obtain
\begin{equation*}
    \|\mathcal P_hv_{k+1}^\cl-v_{k+1}^\cl-h\Lop v_{k+1}^\cl\|_{L^\infty(\M)}\le Ch^2,
\end{equation*}
uniformly for $k=0,\ldots,K-1$. Since $T-t_k=T-t_{k+1}+h$, Taylor's theorem applied to $u^\cl(x,\cdot)$ at $T-t_{k+1}$ yields, uniformly in $x\in\M$ and $k=0,\ldots,K-1$,
\begin{equation*}
    \|v_k^\cl-v_{k+1}^\cl-h\partial_tu^\cl(\cdot,T-t_{k+1})\|_{L^\infty(\M)}\le Ch^2.
\end{equation*}
Since $T-t_k=T-t_{k+1}+h$ and $\partial_t f$ is bounded, the mean value theorem gives
\begin{equation*}
    \|f(\cdot,T-t_k)-f(\cdot,T-t_{k+1})\|_{L^\infty(\M)}\le h\|\partial_t f\|_{L^\infty(\M\times[0,T])}\le Ch.
\end{equation*}
Using $\Lop v_{k+1}^\cl=\Lop u^\cl(\cdot,T-t_{k+1})$ and the equation $\partial_tu^\cl(\cdot,T-t_{k+1})=\Lop v_{k+1}^\cl+f(\cdot,T-t_{k+1})$, we decompose the local defect as
\begin{eqnarray*}
&&\mathcal P_hv_{k+1}^\cl+h f(\cdot,T-t_k)-v_k^\cl=\big(\mathcal P_hv_{k+1}^\cl-v_{k+1}^\cl-h\Lop v_{k+1}^\cl\big)\\
&&\hspace{15pt}
-\big(v_k^\cl-v_{k+1}^\cl-h\partial_tu^\cl(\cdot,T-t_{k+1})\big)+h\big(f(\cdot,T-t_k)-f(\cdot,T-t_{k+1})\big).
\end{eqnarray*}
Consequently, the preceding estimates and the triangle inequality yield
\begin{eqnarray*}
&&\|\mathcal P_hv_{k+1}^\cl+h f(\cdot,T-t_k)-v_k^\cl\|_{L^\infty(\M)}\le\|\mathcal P_hv_{k+1}^\cl-v_{k+1}^\cl-h\Lop v_{k+1}^\cl\|_{L^\infty(\M)}\\
&&\hspace{-8pt}
+\|v_k^\cl-v_{k+1}^\cl-h\partial_tu^\cl(\cdot,T-t_{k+1})\|_{L^\infty(\M)}+h\|f(\cdot,T-t_k)-f(\cdot,T-t_{k+1})\|_{L^\infty(\M)}\\
&&\hspace{-8pt}\le Ch^2+Ch^2+h(Ch)\le Ch^2.
\end{eqnarray*}

Set $e_k=\|v_{h,k}^\cl-v_k^\cl\|_{L^\infty(\M)}$. By the backward recursion, adding and subtracting $\mathcal P_hv_{k+1}^\cl$, and using the triangle inequality together with the nonexpansiveness of the Markov operator $\mathcal P_h$ in the supremum norm, we obtain
\begin{align*}
e_k &=\big\|\mathcal P_h\big(v_{h,k+1}^\cl-v_{k+1}^\cl\big)+\mathcal P_hv_{k+1}^\cl+h f(\cdot,T-t_k)-v_k^\cl\big\|_{L^\infty(\M)}\\
&\le\big\|\mathcal P_h\big(v_{h,k+1}^\cl-v_{k+1}^\cl\big)\big\|_{L^\infty(\M)}+\big\|\mathcal P_hv_{k+1}^\cl+h f(\cdot,T-t_k)-v_k^\cl\big\|_{L^\infty(\M)}\\
&\le e_{k+1}+Ch^2,\quad k=K-1,\ldots,0.
\end{align*}
Since $v_{h,K}^\cl=v_K^\cl=u_0$, we have $e_K=0$. Iterating the preceding estimate and using $K=T/h$ yield $e_0\le KCh^2=CTh$. Since $v_{h,0}^\cl=u_h^\cl(\cdot,T)$ and $v_0^\cl=u^\cl(\cdot,T)$, the result follows.
\end{proof}

\begin{remark}
\label{rem:dirichlet-temporal-error}
For the Dirichlet problem, the preceding local consistency argument applies to all p-Ret steps completed before the numerical boundary exit. Suppose that the weak error introduced by the numerical boundary procedure is bounded by $\mathcal E_{\partial}$, including the approximation of the exit time and exit location and the corresponding evaluation of the boundary datum $\psi$. Then the same backward stability argument gives
\begin{equation}
    \big\|u_h^\bd(\cdot,T)-u^\bd(\cdot,T)\big\|_{L^\infty(\M)}\le Ch+\mathcal E_{\partial}.
    \label{eq:dirichlet-temporal-error}
\end{equation}
In particular, if $\mathcal E_{\partial}\le Ch$, then the full temporal weak error is of first order. Since the estimate of $\mathcal E_{\partial}$ depends on the specific numerical boundary procedure and the regularity of $\psi$, it is not pursued here.
\end{remark}

Lemma~\ref{lem:weak-temporal-convergence} establishes the first-order temporal weak error for the closed problem, while Remark~\ref{rem:dirichlet-temporal-error} separates the corresponding Dirichlet error into the interior temporal contribution and the additional error introduced by the numerical boundary procedure. The sampling estimates below are formulated relative to the time-discrete quantity $u_h^\star$ and therefore apply to both the closed and Dirichlet discrete functionals. We begin with the Monte Carlo reference bound.

\begin{lemma}
\label{lem:mc-reference-bound}
Suppose that Assumption~\ref{ass:bounded-data} holds. Let $\star\in\{\cl,\bd\}$ and let $x\in\M$ be fixed. Then it holds that
\begin{equation}
    \operatorname{Var}\big(\widehat u_{n,h}^{\MC,\star}(x,T)\big)\le\frac{C_{\MC}}{n},
    \quad
    \E\big[\big|\widehat u_{n,h}^{\MC,\star}(x,T)-u_h^\star(x,T)\big|^2\big]\le\frac{C_{\MC}}{n},
    \label{eq:mc-reference-bounds}
\end{equation}
where $C_{MC}$ is a positive constant independent of $n$ and $h$.
\end{lemma}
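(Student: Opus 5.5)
The plan is to bound the single-path functional uniformly and then use independence of the $n$ Monte Carlo paths. First I will show that $S_h^\star$ is almost surely bounded by a constant independent of $h$, using Assumption~\ref{ass:bounded-data}.

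\emph{Closed case.} We have
\[
|S_h^{\cl}|\le |u_0(X_K)|+h\sum_{k=0}^{K-1}|f(X_k,T-t_k)|\le M_0+KhM_f=M_0+TM_f .
\]

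\emph{Dirichlet case.} On the event $\{\widehat\tau_h>T\}$ the same bound $M_0+TM_f$ holds. On the event $\{\widehat\tau_h=t_k+\lambda_kh\le T\}$,
\[
|S_h^{\bd}|\le M_\psi+khM_f+\lambda_khM_f=M_\psi+\widehat\tau_hM_f\le M_\psi+TM_f .
\]
This uses $\lambda_k\in(0,1]$ and the fact that $\widehat X_\partial=\Phi(\gamma_k(\lambda_k))$ lies on $\partial\M$, since $\chi(\gamma_k(\lambda_k))=0$ by continuity and $\Phi(\partial U)=\partial\M$, so that $\psi$ is evaluated where the bound \eqref{eq:bounded-boundary-data} applies. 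Hence in both cases
\[
|S_h^\star|\le B:=\max(M_0,M_\psi)+TM_f
\]
almost surely, with $B$ independent of $h$ and of $x$.

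Next, under MC sampling the inputs $\{U_k^{(i)}\}$ are independent across $i$. The paths are generated by the same deterministic map from $\xi_0$, so the $S_h^{\MC,\star,(i)}$, $i=1,\dots,n$, are i.i.d. copies of $S_h^\star$. The transformation $\mathcal D$ pushes $\operatorname{Unif}([0,1)^{r-1})$ forward to $\operatorname{Unif}(\mathbb S^{r-1})$, so each copy has law equal to that of $S_h^\star$ under \eqref{eq:one-step-pret-transition}, and thus mean $u_h^\star(x,T)$. Therefore
\[
\operatorname{Var}\big(\widehat u^{\MC,\star}_{n,h}\big)=\frac{1}{n}\operatorname{Var}(S_h^\star)\le\frac{\E[(S_h^\star)^2]}{n}\le\frac{B^2}{n}.
\]
Unbiasedness gives that the mean squared error equals the variance, so $C_{\MC}=B^2$ works.

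The only mildly delicate point is the Dirichlet bound. It requires confirming that the exit point is well defined and lies on $\partial\M$, which follows from the definition of $\lambda_k$ as an infimum, the continuity of $\chi\circ\gamma_k$, and $\chi>0$ at $\lambda=0$. It also requires the partial-step source term to be controlled, which follows from $\widehat\tau_h\le T$. Measurability of $S_h^\star$ as a function of the inputs is routine, since it is piecewise built from continuous maps and indicators of measurable events.
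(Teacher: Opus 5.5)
Your proposal is correct and follows the paper's route: a uniform almost-sure bound $|S_h^\star|\le \max(M_0,M_\psi)+TM_f$ (just $M_0+TM_f$ in the closed case) from Assumption~\ref{ass:bounded-data}, then independence of the $n$ paths to get variance at most $B^2/n$, and unbiasedness to identify the mean squared error with the variance. Your checks that $\widehat X_\partial\in\partial\M$ and that the partial-step source term is controlled by $\widehat\tau_h\le T$ justify details the paper only asserts.
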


\begin{proof}
Let $S_h^{\star,[\nu]}(x,T)$, $\nu=1,\ldots,n$, be independent and
identically distributed random variables with the same distribution as $S_h^\star(x,T)$. By the definitions of $\widehat u_{n,h}^{\MC,\star}(x,T)$ and $u_h^\star(x,T)$ in \eqref{eq:unified-estimator} and \eqref{eq:discrete-solution}, respectively, we have
\begin{equation*}
    \E\big[\widehat u_{n,h}^{\MC,\star}(x,T)\big]=\frac1n\sum_{\nu=1}^n\E\big[S_h^{\star,[\nu]}(x,T)\big]=\E\big[S_h^\star(x,T)\big]=u_h^\star(x,T).
\end{equation*}
By Assumption~\ref{ass:bounded-data} and $Kh=T$, we have
\begin{equation*}
    |S_h^\star(x,T)|\le M_\star+TM_f,
    \quad
    M_\star=\begin{cases}
        M_0, & \star=\cl,\\
        \max\{M_0,M_\psi\}, & \star=\bd.
    \end{cases}
\end{equation*}
Consequently,
\begin{equation*}
    \operatorname{Var}\big(S_h^\star(x,T)\big)=\E\big[|S_h^\star(x,T)|^2\big]-\big|\E\big[S_h^\star(x,T)\big]\big|^2\le\E\big[|S_h^\star(x,T)|^2\big]\le\big(M_\star+TM_f\big)^2.
\end{equation*}
By independence,
\begin{align*}
\operatorname{Var}\big(\widehat u_{n,h}^{\MC,\star}(x,T)\big)&=\operatorname{Var}\Big(\frac1{n}\sum_{\nu=1}^nS_h^{\star,[\nu]}(x,T)\Big)=\frac1{n^2}\sum_{\nu=1}^n\operatorname{Var}\big(S_h^{\star,[\nu]}(x,T)\big)\\
&=\frac1n\operatorname{Var}\big(S_h^\star(x,T)\big)\le\frac{\big(M_\star+TM_f\big)^2}{n}\le\frac{C_{\MC}}{n}.
\end{align*}
Finally, using $\E\big[\widehat u_{n,h}^{\MC,\star}(x,T)\big]=u_h^\star(x,T)$, we obtain
\begin{align*}
\E\big[\big|\widehat u_{n,h}^{\MC,\star}(x,T)-u_h^\star(x,T)\big|^2\big]&=\E\big[\big|\widehat u_{n,h}^{\MC,\star}(x,T)-\E\big[\widehat u_{n,h}^{\MC,\star}(x,T)\big]\big|^2\big]\\
&=\operatorname{Var}\big(\widehat u_{n,h}^{\MC,\star}(x,T)\big)\le\frac{C_{\MC}}{n}.
\end{align*}
This ends the proof.
\end{proof}

Thus, the Monte Carlo variance converges at the rate $O(n^{-1})$, while the corresponding sampling RMSE converges at the rate $O(n^{-1/2})$. Together with the temporal weak error estimate, this provides the reference framework for the subsequent analysis of the randomized quasi-Monte Carlo estimators. We next study the sampling variance of the direct RQMC estimator relative to the same time-discrete solution $u_h^\star$.

\subsection{Randomized quasi--Monte Carlo error estimates}
\label{subsec:rqmc-error-estimates}

We next analyze the sampling error of the direct RQMC estimator. We first consider the closed manifold problem. Throughout this subsection, $h$ and $K=T/h$ are fixed. Recall from Subsection~\ref{subsubsec:sampling-constructions} that $s=(r-1)K$. All constants below are independent of $n$, but may depend on $h$.

Let $z=(z_1,\ldots,z_K)\in[0,1)^s$, with $z_k\in[0,1)^{r-1}$, and define $Z_k(z)=\mathcal D(z_k)$ for $k=1,\ldots,K$. Starting from the coordinate representation $\xi_0$ of the initial point $x$, define the corresponding p-Ret path by
\begin{equation}
    \widetilde{\xi}_{h,0}(z)=\xi_0,
    \quad
    \widetilde{\xi}_{h,k}(z)=\Psi_h\big(\widetilde{\xi}_{h,k-1}(z),Z_k(z)\big),
    \quad
    k=1,\ldots,K.
    \label{eq:rqmc-path-map}
\end{equation}
Periodic coordinates are wrapped according to the convention introduced previously.

Since every path on a closed manifold is propagated through all $K$ time steps, no stopping discontinuity occurs. Define
\begin{equation}
    G_h^\cl(z)=u_0\big(\Phi(\widetilde{\xi}_{h,K}(z))\big)+h\sum_{k=0}^{K-1}f\big(\Phi(\widetilde{\xi}_{h,k}(z)),T-t_k\big).
    \label{eq:closed-rqmc-integrand}
\end{equation}
Let $U=(U_1,\ldots,U_K)\sim\operatorname{Unif}\big([0,1)^s\big)$, where $U_k\in[0,1)^{r-1}$. By construction, $S_h^\cl=G_h^\cl(U)$. Hence, by the definition of $u_h^\cl$,
\begin{equation}
    u_h^\cl(x,T)=\E\big[G_h^\cl(U)\big]=\int_{[0,1)^s}G_h^\cl(z)\dd z.
    \label{eq:closed-rqmc-integral-representation}
\end{equation}

For the nested-uniformly scrambled Sobol' point set $Q_n=\big\{z^{(1)},\ldots,z^{(n)}\big\}\subset[0,1)^s$, the direct RQMC estimator is
\begin{equation}
    \widehat u_{n,h}^{\RQMC,\cl}(x,T)=\frac1n\sum_{i=1}^nG_h^\cl\big(z^{(i)}\big).
    \label{eq:closed-direct-rqmc-estimator}
\end{equation}

We first establish the basic RQMC estimate without imposing any additional smoothness condition on $G_h^\cl$.

\begin{lemma}
\label{lem:closed-direct-rqmc-general}
Suppose that Assumption~\ref{ass:bounded-data} holds and that
$Q_n$ consists of the first $n=2^\ell$ points of a nested-uniformly scrambled Sobol' sequence in dimension $s$. Then $\E\big[\widehat u_{n,h}^{\RQMC,\cl}(x,T)\big]=u_h^\cl(x,T)$, and for fixed $h$, we have
\begin{equation}
    \operatorname{Var}\big(\widehat u_{n,h}^{\RQMC,\cl}(x,T)\big)=o\big(n^{-1}\big),
    \quad
    n\to\infty.
    \label{eq:closed-direct-rqmc-general-rate}
\end{equation}
\end{lemma}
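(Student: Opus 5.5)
The plan is to reduce the claim to Owen's classical result for nested-uniformly scrambled digital nets. That result says: if $F\in L^2([0,1)^s)$ and $Q_n$ is a scrambled $(\mathfrak t,\ell,s)$-net in base $2$ with $\mathfrak t$ fixed, then the equal-weight average has mean $\int F$ and variance $o(n^{-1})$ as $\ell\to\infty$. So the whole argument consists of verifying the hypotheses for $F=G_h^\cl$ with $h$, hence $K$ and $s=(r-1)K$, held fixed.

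\emph{Unbiasedness.} Nested uniform scrambling makes each scrambled point $z^{(i)}$ marginally $\operatorname{Unif}([0,1)^s)$ (see the construction in Subsection~\ref{subsubsec:sampling-constructions} and \cite{Owen1997}). Since $G_h^\cl$ is measurable, linearity of expectation and \eqref{eq:closed-rqmc-integral-representation} give
\[
\E\big[\widehat u_{n,h}^{\RQMC,\cl}(x,T)\big]=\frac1n\sum_{i=1}^n\E\big[G_h^\cl(z^{(i)})\big]=\int_{[0,1)^s}G_h^\cl(z)\dd z=u_h^\cl(x,T).
\]
Measurability of $G_h^\cl$ follows because it is a composition of Borel maps:
\begin{itemize}[leftmargin=1.7em]
\item the inverse-beta map $\mathcal D$;
\item the smooth one-step map $\Psi_h$;
\item the periodic wrapping, which is piecewise continuous;
\item $\Phi$, and the data $u_0$ and $f$.
\end{itemize}

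\emph{Square integrability.} By Assumption~\ref{ass:bounded-data}, exactly as in the proof of Lemma~\ref{lem:mc-reference-bound}, we have $|G_h^\cl(z)|\le M_0+TM_f$ for all $z$. Hence $G_h^\cl\in L^\infty\subset L^2([0,1)^s)$, and no smoothness of $G_h^\cl$ is needed.

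\emph{Variance rate.} Expand $G_h^\cl$ in the base-$2$ Haar (Walsh-type) multiresolution ANOVA decomposition. Owen's formula gives
\[
\operatorname{Var}\big(\widehat u_{n,h}^{\RQMC,\cl}\big)=\frac1n\sum_{\emptyset\ne u\subseteq\{1,\dots,s\}}\ \sum_{\kappa}\Gamma_{u,\kappa}\,\sigma^2_{u,\kappa},
\]
where $\sigma^2_{u,\kappa}$ are the variance components of $G_h^\cl$. For a $(\mathfrak t,\ell,s)$-net the gain coefficients satisfy two properties:
\begin{itemize}[leftmargin=1.7em]
\item they are uniformly bounded, $\Gamma_{u,\kappa}\le 2^{\mathfrak t}3^s$;
\item each fixed gain coefficient vanishes once $\ell$ is large, namely $\Gamma_{u,\kappa}=0$ whenever $|\kappa|\le \ell-\mathfrak t-|u|$.
\end{itemize}
Since $\sum_{u,\kappa}\sigma^2_{u,\kappa}=\operatorname{Var}(G_h^\cl(U))<\infty$, dominated convergence shows that the double sum tends to $0$ as $\ell\to\infty$. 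This yields \eqref{eq:closed-direct-rqmc-general-rate}.

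\emph{Main obstacle.} The delicate point is only bookkeeping: one must check that the Sobol' points used are the first $n=2^\ell$ points, so that they form a genuine $(\mathfrak t,\ell,s)$-net with $\mathfrak t$ independent of $\ell$. One must also check that $s$ does not grow with $n$, which holds because $h$ is fixed. Both facts are guaranteed by the setup, so the lemma follows directly from Owen's theorem; I would cite \cite{Owen1997,Owen1997MonteCarloVariance} for the gain-coefficient bounds rather than reprove them.
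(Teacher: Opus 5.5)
Your proposal is correct and follows the paper's proof essentially step for step: marginal uniformity of the nested-uniformly scrambled points gives unbiasedness, Assumption~\ref{ass:bounded-data} gives $|G_h^\cl|\le M_0+TM_f$ and hence $G_h^\cl\in L^2([0,1)^s)$, and Owen's general scrambled-net theorem for square-integrable integrands then gives variance $o(n^{-1})$ for fixed $s$. Your extra sketch of that theorem is accurate and just unpacks the result the paper cites: the gain coefficients are uniformly bounded, they vanish for $|\kappa|\le \ell-\mathfrak t-|u|$, and dominated convergence over the variance components finishes the argument.
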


\begin{proof}
Nested uniform scrambling makes each $z^{(i)}$ marginally uniform on $[0,1)^s$ \cite{Owen1997}. Therefore,
\begin{equation*}
\E\big[\widehat u_{n,h}^{\RQMC,\cl}(x,T)\big]=\frac1n\sum_{i=1}^n\E\big[G_h^\cl\big(z^{(i)}\big)\big]=\int_{[0,1)^s}G_h^\cl(z)\dd z=u_h^\cl(x,T).
\end{equation*}

Moreover, Assumption~\ref{ass:bounded-data} and $Kh=T$ give $\big|G_h^\cl(z)\big|\le M_0+TM_f$ for $z\in[0,1)^s$, and hence $G_h^\cl\in L^2\big([0,1)^s\big)$. The general scrambled-net variance result for square-integrable integrands \cite{HoOwen2026RQMC,Owen1997} then yields \eqref{eq:closed-direct-rqmc-general-rate}.
\end{proof}

Lemma~\ref{lem:closed-direct-rqmc-general} gives an asymptotic improvement over the Monte Carlo variance $O(n^{-1})$, but does not specify a fixed algebraic rate. For the sphere transformation used in the present method, derivatives with respect to the original uniform variables may become singular near the boundary of the unit cube. We therefore pass to a Gaussian representation and first introduce the notation and polynomial-growth function classes used in the RQMC analysis of \cite{OuyangPanHe2026,OuyangWangHe2024}. We first introduce the notation and function classes needed for the subsequent analysis.

For an integer $q\ge1$, write $1:q=\{1,\ldots,q\}$. For $v\subseteq1:q$, let $\partial_y^v F:=\big(\prod_{j\in v}\partial_{y_j}\big)F$ denote the mixed partial derivative of $F$ with respect to the coordinates indexed by $v$. We say that $F:\mathbb R^q\to\mathbb R^m$ belongs to $C_p(\mathbb R^q;\mathbb R^m)$ if there exist constants $M,B,\kappa>0$ such that
\begin{equation*}
    |F(y)|\le M\|y\|_2^\kappa+B,
    \quad
    y\in\mathbb R^q.
\end{equation*}
When $m=1$, we write simply $C_p(\mathbb R^q)$.

Let $S^q(\mathbb R^q)$ denote the class of functions $F:\mathbb R^q\to\mathbb R$ such that $\partial_y^vF$ exists and is continuous for every $v\subseteq1:q$, and for $M,B,\kappa>0$, define the polynomial growth class
\begin{equation}
    G_p(M,B,\kappa):=\Big\{F\in S^q(\mathbb R^q):\sup_{v\subseteq1:q}|\partial_y^vF(y)|\le M\|y\|_2^\kappa+B,
    \quad
    y\in\mathbb R^q\Big\}.
    \label{eq:polynomial-growth-class}
\end{equation}

Let $\Phi_{\rm N}$ denote the cumulative distribution function of the standard normal distribution, and use the same notation for its componentwise action. For $y=(y_1,\ldots,y_K)\in\mathbb R^s$, with $y_k\in\mathbb R^{r-1}$, define the Gaussian-transformed integrand by
\begin{equation}
    \widetilde G_h^\cl(y):=G_h^\cl\big(\Phi_{\rm N}(y)\big),
    \quad
    y\in\mathbb R^s.
    \label{eq:closed-rqmc-gaussian-integrand}
\end{equation}
If $Y\sim N(0,I_s)$, then $\Phi_{\rm N}(Y)$ is uniformly distributed on $[0,1)^s$, and therefore $u_h^\cl(x,T)=\mathbb E\big[\widetilde G_h^\cl(Y)\big]$. 

We next establish the regularity of the sphere transformation after the Gaussian change of variables.

\begin{lemma}
\label{lem:gaussian-sphere-transform}
Suppose that $\mathcal D$ is the recursive inverse-beta transformation that generates a uniform point on $S^{r-1}$ from $r-1$ independent uniform variables. Then, for every $v\subseteq1:(r-1)$, we have
\begin{equation}
    \sup_{y\in\mathbb R^{r-1}}\Big\|\partial_y^v\mathcal D\big(\Phi_{\rm N}(y)\big)\Big\|_2<\infty.
    \label{eq:gaussian-sphere-transform-bound}
\end{equation}
\end{lemma}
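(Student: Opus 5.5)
The plan is to exploit the product structure of the recursive inverse-beta transformation, which reduces the mixed-derivative bound \eqref{eq:gaussian-sphere-transform-bound} to bounds on single-variable derivatives of scalar factors. I first make the construction explicit. The first uniform coordinate $u_1$ produces a point on $\mathbb S^1$ via $(\cos 2\pi u_1,\sin 2\pi u_1)$. For $d=3,\ldots,r$, a point $w\in\mathbb S^{d-2}$ is lifted to $\mathbb S^{d-1}$ by $(\sqrt{1-t_d^2}\,w,\;t_d)$. Here $t_d=2F_d^{-1}(u_{d-1})-1$, and $F_d$ is the distribution function of the $\mathrm{Beta}\bigl(\tfrac{d-1}{2},\tfrac{d-1}{2}\bigr)$ law, so that $t_d$ has density proportional to $(1-t^2)^{(d-3)/2}$ on $[-1,1]$. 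By induction, each component of $\mathcal D(u)$ is a product $\prod_{j}g_j(u_j)$, in which each variable $u_j$ enters through exactly one scalar factor. The possible factors are $\cos 2\pi u_1$, $\sin 2\pi u_1$, $t_d(u_{d-1})$, $\sqrt{1-t_d(u_{d-1})^2}$, or the constant $1$.

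Composing with $\Phi_{\rm N}$ preserves this separated structure: each component of $\mathcal D(\Phi_{\rm N}(y))$ is $\prod_j\widetilde g_j(y_j)$ with $\widetilde g_j=g_j\circ\Phi_{\rm N}$. For $v\subseteq 1:(r-1)$, the mixed derivative $\partial_y^v$ of such a product is $\prod_{j\in v}\widetilde g_j'(y_j)\prod_{j\notin v}\widetilde g_j(y_j)$, since every variable appears in only one factor. Each $|\widetilde g_j|\le 1$. It therefore suffices to show that every scalar function $\widetilde g_j$ is $C^1$ on $\mathbb R$ with $\sup_{y}|\widetilde g_j'(y)|<\infty$. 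For the circle factors this is immediate, because $|\widetilde g'(y)|\le 2\pi\phi_{\rm N}(y)$, where $\phi_{\rm N}$ is the standard normal density.

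The main step, and the one I expect to be the real obstacle, is the bound for the factors $t_d\circ\Phi_{\rm N}$ and $\sqrt{1-t_d^2}\circ\Phi_{\rm N}$. Here $dt_d/du$ blows up at $u\in\{0,1\}$ when $d\ge4$, because the Beta density vanishes at the endpoints. The square root also has an infinite derivative at $t=\pm1$ for every $d$. On any compact $y$-set, $\Phi_{\rm N}(y)$ stays in a compact subinterval of $(0,1)$. On such a subinterval $F_d$ has a positive smooth density and $1-t_d^2>0$, so both factors are smooth and continuity gives the bound there. It remains to control the tails, and by symmetry it suffices to treat $y\to+\infty$.

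Write $\varepsilon=1-\Phi_{\rm N}(y)$. The density asymptotics give $1-t_d\sim c\,\varepsilon^{2/(d-1)}$ and $dt_d/du\sim c'\varepsilon^{2/(d-1)-1}$. The Mills-ratio estimate gives $\phi_{\rm N}(y)\le (y+1)\varepsilon$ for $y\ge0$. Hence
\begin{equation*}
\Bigl|\tfrac{d}{dy}t_d(\Phi_{\rm N}(y))\Bigr|\lesssim (y+1)\,\varepsilon^{2/(d-1)}\to0.
\end{equation*}
For the square-root factor, $\tfrac{d}{dy}\sqrt{1-t_d^2}=-t_d\,t_d'/\sqrt{1-t_d^2}$. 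This is of order $(y+1)\varepsilon^{2/(d-1)}\big/\varepsilon^{1/(d-1)}=(y+1)\varepsilon^{1/(d-1)}$, which also tends to $0$ because $\varepsilon$ decays like a Gaussian tail and so beats any polynomial in $y$. The case $d=3$, where $t_3$ is uniform, is included in these formulas with exponent $2/(d-1)=1$. Combining the compact-set bound with the vanishing tail derivatives gives finite suprema for all factors, and the product formula then yields \eqref{eq:gaussian-sphere-transform-bound}.
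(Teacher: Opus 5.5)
Your proposal is correct and follows essentially the same route as the paper. In both, the separated product structure of the recursive construction reduces every mixed derivative to products of scalar factors and their first derivatives, and the factors built from $F^{-1}\circ\Phi_{\rm N}$ and $\sqrt{1-(F^{-1}\circ\Phi_{\rm N})^2}$ are bounded using the endpoint asymptotics of the density $(1-t^2)^{(d-3)/2}$ together with the Mills ratio. The differences are only cosmetic: you build the sphere from the circle upward rather than peeling off the first coordinate, and you use the explicit inequality $\Phi_{\rm N}'(y)\le(y+1)\bigl(1-\Phi_{\rm N}(y)\bigr)$ where the paper uses asymptotic equivalences.
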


\begin{proof}

For $r=2$, $\mathcal D\big(\Phi_{\rm N}(y)\big)=\big(\cos\big(2\pi\Phi_{\rm N}(y)\big),\sin\big(2\pi\Phi_{\rm N}(y)\big)\big)$, whose function value and first derivative are bounded since the standard normal density and the trigonometric functions are bounded.

Suppose now that $r>2$. At the $j$-th recursive step with $j=1,\ldots,r-2$, let $p=r-j+1$. If $F_p$ denotes the distribution function of the first coordinate of a uniformly distributed point on $S^{p-1}$, then its density is $f_p(w)=c_p(1-w^2)^{(p-3)/2}$ with $-1<w<1$, for some $c_p>0$. The inverse-beta construction used in $\mathcal D$ is precisely the inverse distribution function $F_p^{-1}$. Define
\begin{equation*}
    q_p(y)=F_p^{-1}\big(\Phi_{\rm N}(y)\big),
    \quad
    \rho_p(y)=\sqrt{1-q_p(y)^2}.
\end{equation*}
Differentiating $F_p\big(q_p(y)\big)=\Phi_{\rm N}(y)$ gives
\begin{equation*}
    q_p'(y)=\frac{\Phi_{\rm N}'(y)}{c_p\big(1-q_p(y)^2\big)^{(p-3)/2}},
    \quad
    \rho_p'(y)=-\frac{q_p(y)q_p'(y)}{\rho_p(y)}.
\end{equation*}

As $w\uparrow1$ and $y\to+\infty$, we have $1-F_p(w)\asymp(1-w)^{(p-1)/2}$ and $1-\Phi_{\rm N}(y)\asymp\frac{\Phi_{\rm N}'(y)}{y}$. Since
$F_p\big(q_p(y)\big)=\Phi_{\rm N}(y)$, we obtain
\begin{equation*}
    1-q_p(y)^2\asymp\big(1-\Phi_{\rm N}(y)\big)^{2/(p-1)},\quad y\to+\infty.
\end{equation*}
Therefore, we obtain from the expression for $q_p'(y)$ that
\begin{equation*}
    q_p'(y)\asymp y\big(1-\Phi_{\rm N}(y)\big)^{2/(p-1)}\to 0,\quad y\to+\infty.
\end{equation*}
Moreover, $\rho_p(y)=\sqrt{1-q_p(y)^2}\asymp\big(1-\Phi_{\rm N}(y)\big)^{1/(p-1)}$, and hence
\begin{equation*}
    |\rho_p'(y)|\asymp y\big(1-\Phi_{\rm N}(y)\big)^{1/(p-1)}\to0,\quad y\to+\infty.
\end{equation*}
The same conclusions hold as $y\to-\infty$. Since $q_p'$ and $\rho_p'$ are continuous on $\mathbb R$, both are uniformly bounded.

Indeed, writing $\mathcal D\big(\Phi_{\rm N}(y)\big)=\big(Z_1(y),\ldots,Z_r(y)\big)$, the recursive construction gives
\begin{align*}
    Z_1(y)&=q_r(y_1),\\
    Z_2(y)&=\rho_r(y_1)q_{r-1}(y_2),\\
    Z_3(y)&=\rho_r(y_1)\rho_{r-1}(y_2)q_{r-2}(y_3),\\
    &\ \vdots\\
    Z_{r-2}(y)&=\rho_r(y_1)\rho_{r-1}(y_2)\cdots\rho_4(y_{r-3})q_3(y_{r-2}),\\
    Z_{r-1}(y)&=\rho_r(y_1)\rho_{r-1}(y_2)\cdots\rho_3(y_{r-2})\cos\big(2\pi\Phi_{\rm N}(y_{r-1})\big),\\
    Z_r(y)&=\rho_r(y_1)\rho_{r-1}(y_2)\cdots\rho_3(y_{r-2})\sin\big(2\pi\Phi_{\rm N}(y_{r-1})\big).
\end{align*}

Hence, each variable $y_j$ appears in only one factor of each component. Since $\partial_y^v$, with $v\subseteq1:(r-1)$, differentiates each variable at most once, every mixed derivative involves only the bounded functions $q_p$, $\rho_p$ and their first derivatives, together with the bounded Gaussian and trigonometric factors. Therefore,
\begin{equation*}
    \sup_{y\in\mathbb R^{r-1}}\Big\|\partial_y^v\mathcal D\big(\Phi_{\rm N}(y)\big)\Big\|_2<\infty,
\end{equation*}
which proves \eqref{eq:gaussian-sphere-transform-bound}.
\end{proof}

\begin{assumption}
\label{ass:closed-rqmc-smoothness}

In addition to Assumption~\ref{ass:preliminary-error}, suppose that $r\ge2$. Fix $h$ and set $s=(r-1)K$. Define $\overline u_0=u_0\circ\Phi$, $\overline f(\xi,t)=f\big(\Phi(\xi),t\big)$, and write $\Gamma=(\Gamma_{ij}^{\ell})_{1\le i,j,\ell\le m}$. The functions $b$, $\Sigma$, $\Gamma$, $\overline u_0$, and $\overline f(\cdot,t)$ admit identification-compatible $C^s$ extensions to the parameter covering space. For every multi-index $\alpha\in\mathbb N_0^m$ with $|\alpha|\le s$, all components of $\partial_\xi^\alpha b,\,\partial_\xi^\alpha\Sigma,\,\partial_\xi^\alpha\Gamma$ belong to $C_p(\mathbb R^m)$, and $\partial_\xi^\alpha\overline u_0,\,\partial_\xi^\alpha\overline f(\cdot,t)\in C_p(\mathbb R^m)$. The polynomial-growth bounds for $\partial_\xi^\alpha\overline f(\cdot,t)$ are uniform in $t\in[0,T]$. Moreover, the extensions of $b$ and $\Sigma$ satisfy
\begin{equation*}
    \|b(\xi)\|_2+\|\Sigma(\xi)\|_{\mathrm F}
    \le
    C\big(1+\|\xi\|_2\big),
    \quad
    \xi\in\mathbb R^m,
\end{equation*}
for some constant $C>0$.

\end{assumption}

\begin{lemma}
\label{lem:closed-rqmc-polynomial-growth}
Suppose that Assumption~\ref{ass:closed-rqmc-smoothness} holds. Then, for every $k=0,\ldots,K$ and every $v\subseteq1:s$, we have
\begin{equation}
    \partial_y^v\Big[\widetilde\xi_{h,k}\circ\Phi_{\rm N}\Big](y)\in C_p\big(\mathbb R^s;\mathbb R^m\big).
    \label{eq:path-mixed-derivatives-Cp}
\end{equation}
Consequently, there exist constants $M_h,B_h,\kappa_h>0$ such that
\begin{equation}
    \widetilde G_h^\cl\in G_p\big(M_h,B_h,\kappa_h\big).
    \label{eq:closed-rqmc-polynomial-growth}
\end{equation}
For the regularity analysis, $\widetilde\xi_{h,k}$ denotes the unwrapped coordinate path on the parameter covering space. This does not change the corresponding manifold path or $G_h^\cl$.
\end{lemma}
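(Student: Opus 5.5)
Fix $h$ and write $\eta_k(y):=\widetilde\xi_{h,k}(\Phi_{\rm N}(y))$, $k=0,\ldots,K$, for the unwrapped coordinate path, and $\zeta_k(y_k):=\mathcal D(\Phi_{\rm N}(y_k))\in\mathbb S^{r-1}$. By Lemma~\ref{lem:gaussian-sphere-transform}, $\zeta_k$ and all of its mixed first-order partial derivatives are bounded on $\mathbb R^{r-1}$. The recursion \eqref{eq:rqmc-path-map} becomes
\begin{equation*}
\eta_k(y)=\Psi_h\big(\eta_{k-1}(y),\zeta_k(y_k)\big),\qquad \eta_0=\xi_0 .
\end{equation*}
Here $\Psi_h(\xi,\zeta)$ is a polynomial of degree at most two in $V_h=hb(\xi)+\sqrt{rh}\,\Sigma(\xi)\zeta$, with coefficients $\Gamma(\xi)$. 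Since $\eta_k$ depends only on $y_1,\ldots,y_k$, we have $\partial_y^v\eta_k=0$ whenever $v$ contains a coordinate from a block $j>k$. The plan is to prove \eqref{eq:path-mixed-derivatives-Cp} by induction on $k$.

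\textbf{Step 1: the case $v=\varnothing$.} Because $\|\zeta_k\|_2=1$ and $b,\Sigma$ grow at most linearly, $\|V_h(\eta_{k-1},\zeta_k)\|\le C(1+\|\eta_{k-1}\|)$. Moreover $\Gamma\in C_p$, so $\|\eta_k\|\le \|\eta_{k-1}\|+C(1+\|\eta_{k-1}\|)+C(1+\|\eta_{k-1}\|^{\kappa})(1+\|\eta_{k-1}\|)^2$. Starting from the constant $\eta_0=\xi_0$, induction gives $\|\eta_k(y)\|\le C_k$ for every $y$. In fact the whole path is bounded, because the only $y$-dependence enters through the bounded $\zeta_j$. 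This makes growth control much easier: every subsequent $C_p$ bound reduces to a uniform bound.

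\textbf{Step 2: mixed derivatives.} Assume that $\partial_y^u\eta_{k-1}$ is bounded for all $u\subseteq1:s$. Write $v=v'\cup w$, where $w\subseteq$ block $k$ and $v'\subseteq$ blocks $1,\ldots,k-1$. Apply the multivariate Faà di Bruno (chain) formula to $\Psi_h(\eta_{k-1}(y),\zeta_k(y_k))$. Since each coordinate is differentiated at most once, every term in $\partial_y^v\eta_k$ has the form
\begin{equation*}
(\partial_\xi^\alpha\partial_\zeta^\beta\Psi_h)\big(\eta_{k-1},\zeta_k\big)\prod_{i}\partial_y^{u_i}\eta_{k-1}\prod_{j}\partial_y^{w_j}\zeta_k ,
\end{equation*}
where $\{u_i\}\cup\{w_j\}$ is a partition of $v$, with $|\alpha|\le|v'|\le s$ and $|\beta|\le|w|$. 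The derivatives $\partial_\xi^\alpha\Psi_h$ involve $\partial_\xi^{\alpha'}b$, $\partial_\xi^{\alpha'}\Sigma$ and $\partial_\xi^{\alpha'}\Gamma$ with $|\alpha'|\le s$. These lie in $C_p$ by Assumption~\ref{ass:closed-rqmc-smoothness}, so they are bounded when evaluated at the bounded point $\eta_{k-1}$. The $\zeta$-derivatives of $\Psi_h$ are polynomial in $\zeta$ and hence bounded. The factors $\partial^{u_i}\eta_{k-1}$ are bounded by the induction hypothesis, and the factors $\partial^{w_j}\zeta_k$ are bounded by Lemma~\ref{lem:gaussian-sphere-transform}. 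Therefore $\partial_y^v\eta_k$ is bounded, which in particular lies in $C_p(\mathbb R^s;\mathbb R^m)$.

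Continuity of these derivatives follows from the $C^s$ regularity of the coefficients and the smoothness of $\zeta_k$ on $\mathbb R^{r-1}$. For the latter, the recursive formulas show that $q_p$ and $\rho_p$ are $C^1$; one must check that $\rho_p$ never vanishes for finite $y$, which holds because $|q_p|<1$ there. This yields $\widetilde G_h^\cl\in S^s(\mathbb R^s)$.

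\textbf{Step 3: the integrand.} $\widetilde G_h^\cl(y)=\overline u_0(\eta_K)+h\sum_k\overline f(\eta_k,T-t_k)$. Repeat the chain-rule argument, using $\partial_\xi^\alpha\overline u_0,\partial_\xi^\alpha\overline f\in C_p$ (uniformly in $t$) evaluated on bounded arguments. This gives $\sup_v|\partial_y^v\widetilde G_h^\cl|\le B_h$, so \eqref{eq:closed-rqmc-polynomial-growth} holds with any $\kappa_h>0$ and suitable $M_h$. The wrapping convention is harmless because the coefficients are identification-compatible on the covering space.

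\textbf{Main obstacle.} The main difficulty is the bookkeeping in Step 2. One must ensure that derivative orders in $\xi$ never exceed $s$; this holds since $|v|\le s$. One must also confirm that no unbounded factor arises from $\zeta_k$, which is exactly where Lemma~\ref{lem:gaussian-sphere-transform} is used. If Step 1's boundedness failed, for instance through a wrapped rather than unwrapped representative, I would instead propagate polynomial bounds $\|\partial^v\eta_k\|\le M_k(1+\|y\|)^{\kappa_k}$ through the same formula, using closure of $C_p$ under products and compositions with $C_p$ functions.
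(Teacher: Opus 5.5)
Your proof is correct. It follows the paper's skeleton: induction on $k$ through $\eta_k=\Psi_h(\eta_{k-1},\zeta_k)$, the set-partition Fa\`a di Bruno formula for first-order mixed derivatives, Lemma~\ref{lem:gaussian-sphere-transform} for the direction factors, and then the chain rule for $\widetilde G_h^\cl$. The one genuinely different ingredient is your observation that, for fixed $h$, the unwrapped path is uniformly bounded in $y$, because all $y$-dependence enters through $\zeta_k\in\mathbb S^{r-1}$.

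The paper never uses this. It instead propagates polynomial growth through the induction, using closure of $C_p$ under sums, products and compositions. That is why it relies on the $C_p$ hypotheses on $\partial_\xi^\alpha b$, $\partial_\xi^\alpha\Sigma$, $\partial_\xi^\alpha\Gamma$, $\partial_\xi^\alpha\overline u_0$, $\partial_\xi^\alpha\overline f$ and on the linear-growth bound.

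What your route buys:
\begin{itemize}
\item It is more elementary. Step~1 in fact needs only continuity of $\Psi_h$, since inductively $\eta_k(\mathbb R^s)\subseteq\Psi_h\bigl(\overline{\eta_{k-1}(\mathbb R^s)}\times\mathbb S^{r-1}\bigr)$, which is compact.
\item After Step~1, only continuity of the coefficient derivatives up to order $s$ is used, so the growth hypotheses become superfluous.
\item It gives a stronger conclusion: every $\partial_y^v\widetilde G_h^\cl$ is bounded, so \eqref{eq:closed-rqmc-polynomial-growth} holds with any $\kappa_h>0$.
\end{itemize}

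What the paper's route buys: its growth-propagation argument is the one that would survive if the spherical directions were replaced by Gaussian increments, as in the Euler--Maruyama setting for which the $G_p$ framework was designed. There the path is genuinely unbounded.

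One small correction to your closing hedge. Passing to wrapped representatives would not destroy boundedness. It would destroy continuity and differentiability in $y$ at the wrap points, and that is the real reason the unwrapped path on the covering space is used.
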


\begin{proof}
Let $z=\Phi_{\rm N}(y)$ for $y=(y_1,\ldots,y_K)\in\mathbb R^s$, where $y_k\in\mathbb R^{r-1}$. Then the path recursion can be written as
\begin{equation}
    \widetilde\xi_{h,0}\big(\Phi_{\rm N}(y)\big)=\xi_0,
    \quad
    \widetilde\xi_{h,k}\big(\Phi_{\rm N}(y)\big)=\Psi_h\Big(\widetilde\xi_{h,k-1}\big(\Phi_{\rm N}(y)\big),\mathcal D\big(\Phi_{\rm N}(y_k)\big)\Big),
    \label{eq:gaussian-transformed-path-recursion}
\end{equation}
for $k=1,\ldots,K$.

Recall that $V_h(\xi,Z)=h\,b(\xi)+\sqrt{rh}\,\Sigma(\xi)Z$, and
\begin{equation*}
    \big(\Psi_h(\xi,Z)\big)^\ell=\xi^\ell+V_h^\ell(\xi,Z)-\frac12\sum_{i,j=1}^m\Gamma_{ij}^{\ell}(\xi)V_h^i(\xi,Z)V_h^j(\xi,Z),
    \quad
    \ell=1,\ldots,m.
\end{equation*}
Since $V_h$ is at most linear in $Z$, Assumption~\ref{ass:closed-rqmc-smoothness} implies that, for all multi-indices $\alpha\in\mathbb N_0^m$ and $\beta\in\mathbb N_0^r$ with $|\alpha|+|\beta|\le s$, every component of $\partial_\xi^\alpha\partial_Z^\beta V_h$ belongs to $C_p\big(\mathbb R^{m+r}\big)$, and the derivative vanishes whenever $|\beta|\ge2$. For the quadratic term in $\Psi_h$, the multivariate Leibniz rule gives
\begin{align*}
&\partial_\xi^\alpha\partial_Z^\beta\Big(\Gamma_{ij}^{\ell}(\xi)V_h^i(\xi,Z)V_h^j(\xi,Z)\Big)\\
&\quad=\sum_{\substack{\alpha_0+\alpha_1+\alpha_2=\alpha\\\beta_1+\beta_2=\beta}}C_{\alpha_0,\alpha_1,\alpha_2,\beta_1,\beta_2}\,\partial_\xi^{\alpha_0}\Gamma_{ij}^{\ell}(\xi)\,\partial_\xi^{\alpha_1}\partial_Z^{\beta_1}V_h^i(\xi,Z)\,\partial_\xi^{\alpha_2}\partial_Z^{\beta_2}V_h^j(\xi,Z),
\end{align*}
here the constants $C_{\alpha_0,\alpha_1,\alpha_2,\beta_1,\beta_2}$ are the corresponding multinomial coefficients. Every factor on the right-hand side has polynomial growth by Assumption~\ref{ass:closed-rqmc-smoothness} and the preceding polynomial growth property of the derivatives of $V_h$. Since $C_p$ is closed under finite sums and products, every component of $\partial_\xi^\alpha\partial_Z^\beta\Psi_h$ belongs to $C_p\big(\mathbb R^{m+r}\big)$ whenever $|\alpha|+|\beta|\le s$. Equivalently, the norm of each Fr\'echet derivative $D_{(\xi,Z)}^j\Psi_h$, $0\le j\le s$, has polynomial growth in $(\xi,Z)$.

We prove \eqref{eq:path-mixed-derivatives-Cp} by induction over $k$. For $k=0$, the function $\widetilde\xi_{h,0}\big(\Phi_{\rm N}(y)\big)=\xi_0$ is constant. Hence it belongs to $C_p(\mathbb R^s;\mathbb R^m)$, and all its nonzero-order mixed derivatives vanish. Fix $k\in\{1,\ldots,K\}$ and suppose that $\partial_y^w\big[\widetilde\xi_{h,k-1}\circ\Phi_{\rm N}\big]\in C_p\big(\mathbb R^s;\mathbb R^m\big)$ for every $w\subseteq1:s$. By Lemma~\ref{lem:gaussian-sphere-transform}, any mixed derivative of $\mathcal D\big(\Phi_{\rm N}(y_k)\big)$ is bounded. A derivative with respect to an input variable outside the $k$-th block is understood to be zero. For $v=\varnothing$, equations \eqref{eq:gaussian-transformed-path-recursion} and the preceding polynomial growth property of $\Psi_h$, together with the induction hypothesis and the boundedness of $\mathcal D\big(\Phi_{\rm N}(y_k)\big)$, yield $\widetilde\xi_{h,k}\big(\Phi_{\rm N}(y)\big)\in C_p\big(\mathbb R^s;\mathbb R^m\big)$. 

Let $v\subseteq 1:s$ be nonempty, and let $\Pi(v)$ denote the set of all partitions of $v$ into nonempty subsets. Applying the multivariate Faà di Bruno formula with respect to $y$ to the right-hand side of \eqref{eq:gaussian-transformed-path-recursion}, we obtain
\begin{align}\label{eq:path-faa-di-bruno-expansion}   
&\partial_y^v
\Big[
    \widetilde\xi_{h,k}\circ\Phi_{\rm N}
\Big](y)
\\
&\quad=
\sum_{w\in\Pi(v)}
D^{|w|}_{(\xi,Z)}\Psi_h
\Big(
    \widetilde\xi_{h,k-1}\big(\Phi_{\rm N}(y)\big),
    \mathcal D\big(\Phi_{\rm N}(y_k)\big)
\Big)
\Bigg[
    \partial_y^\lambda
    \begin{pmatrix}
        \widetilde\xi_{h,k-1}\big(\Phi_{\rm N}(y)\big)\\
        \mathcal D\big(\Phi_{\rm N}(y_k)\big)
    \end{pmatrix}
\Bigg]_{\lambda\in w}.\nonumber
\end{align}
Here, $D^{|w|}_{(\xi,Z)}\Psi_h$ is the $|w|$-th Fr\'echet derivative of $\Psi_h$ with respect to its state and direction arguments, and the bracket means that this multilinear map is evaluated at the vectors indexed by the blocks $\lambda\in w$.

By the preceding polynomial-growth property of the derivatives of $\Psi_h$, the induction hypothesis, and Lemma~\ref{lem:gaussian-sphere-transform}, every factor in \eqref{eq:path-faa-di-bruno-expansion} has polynomial growth in $y$. Since $C_p$ is closed under finite sums and products, we conclude that
\begin{equation*}
    \partial_y^v\Big[\widetilde\xi_{h,k}\circ\Phi_{\rm N}\Big]\in C_p\big(\mathbb R^s;\mathbb R^m\big),
    \quad
    v\subseteq1:s.
\end{equation*}
Induction over $k$ proves \eqref{eq:path-mixed-derivatives-Cp} for every $k=0,\ldots,K$.

We next consider the closed-manifold integrand. By the definitions of $\overline u_0$, $\overline f$, and $\widetilde G_h^\cl$,
\begin{equation}
    \widetilde G_h^\cl(y)
    =
    \overline u_0\Big(
        \widetilde\xi_{h,K}\big(\Phi_{\rm N}(y)\big)
    \Big)
    +
    h\sum_{k=0}^{K-1}
    \overline f\Big(
        \widetilde\xi_{h,k}\big(\Phi_{\rm N}(y)\big),
        T-t_k
    \Big).
    \label{eq:closed-gaussian-integrand-path}
\end{equation}
For every $v\subseteq1:s$, the multivariate Fa\`a di Bruno formula expresses
$\partial_y^v\widetilde G_h^\cl$ as a finite sum of products involving derivatives of $\overline u_0$ or $\overline f$, evaluated along
$\widetilde\xi_{h,k}\circ\Phi_{\rm N}$, and mixed derivatives of
$\widetilde\xi_{h,k}\circ\Phi_{\rm N}$. Assumption~\ref{ass:closed-rqmc-smoothness} and \eqref{eq:path-mixed-derivatives-Cp} therefore give
\begin{equation*}
    \partial_y^v\widetilde G_h^\cl
    \in
    C_p\big(\mathbb R^s\big),
    \quad
    v\subseteq1:s.
\end{equation*}

Since there are only finitely many subsets $v\subseteq1:s$, the constants and polynomial exponents can be chosen uniformly over $v$. Hence there exist $M_h,B_h,\kappa_h>0$ such that
\begin{equation*}
    \sup_{v\subseteq1:s}
    \big|
        \partial_y^v\widetilde G_h^\cl(y)
    \big|
    \le
    M_h\|y\|_2^{\kappa_h}+B_h,
    \quad
    y\in\mathbb R^s.
\end{equation*}
The $C^s$ regularity in Assumption~\ref{ass:closed-rqmc-smoothness}, the continuity of the mixed derivatives in Lemma~\ref{lem:gaussian-sphere-transform}, and the preceding recursion show that all these mixed derivatives are continuous. Therefore, by the definition of $G_p$, we find that
\begin{equation*}
    \widetilde G_h^\cl
    \in
    G_p\big(M_h,B_h,\kappa_h\big),
\end{equation*}
which completes the proof.
\end{proof}

Under Lemma~\ref{lem:closed-rqmc-polynomial-growth}, we now obtain an explicit variance estimate for the closed-manifold direct RQMC estimator.

\begin{lemma}
\label{prop:closed-direct-rqmc-variance}
Suppose that Assumptions~\ref{ass:bounded-data} and \ref{ass:closed-rqmc-smoothness} hold. Let
$Q_n$ consist of the first $n=2^\ell$ points of a nested-uniformly scrambled Sobol' sequence in dimension $s$.
Let $\kappa_h>0$ be an exponent for which $\widetilde G_h^\cl\in G_p(M_h,B_h,\kappa_h)$, as established in Lemma~\ref{lem:closed-rqmc-polynomial-growth}. 
Then, for any $\varepsilon>0$ and all sufficiently large $n$, it holds that
\begin{equation}
\operatorname{Var}\big(\widehat u_{n,h}^{\RQMC,\cl}(x,T)\big)
\le C_{\RQMC,\cl,\varepsilon}n^{-2+\varepsilon},
\label{eq:closed-direct-rqmc-variance}
\end{equation}
where $C_{\RQMC,\cl,\varepsilon}$ is a positive constant independent of $n$.
\end{lemma}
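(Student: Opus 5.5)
The plan is to reduce the claim to the known scrambled-net variance bound for integrands of the form $F\circ\Phi_{\rm N}^{-1}$ with $F$ in the polynomial-growth class $G_p$ (the result of \cite{OuyangPanHe2026,OuyangWangHe2024}). Unbiasedness has already been established in Lemma~\ref{lem:closed-direct-rqmc-general}, so only the variance needs treatment. Since $\Phi_{\rm N}$ is a bijection from $\mathbb R^s$ onto $(0,1)^s$, we write $G_h^\cl(z)=\widetilde G_h^\cl(\Phi_{\rm N}^{-1}(z))$ for $z\in(0,1)^s$, a full-measure set. Hence
\begin{equation*}
    \widehat u_{n,h}^{\RQMC,\cl}(x,T)=\frac1n\sum_{i=1}^n\widetilde G_h^\cl\big(\Phi_{\rm N}^{-1}(z^{(i)})\big)
\end{equation*}
almost surely, because each scrambled point is marginally uniform and lies on the boundary with probability zero.

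Next we invoke Lemma~\ref{lem:closed-rqmc-polynomial-growth}, which gives $\widetilde G_h^\cl\in G_p(M_h,B_h,\kappa_h)$ with continuous mixed derivatives $\partial_y^v$ for all $v\subseteq1:s$. The cited theorem applies to nested-uniformly scrambled $(\mathfrak t,\ell,s)$-nets in base $2$ with $\mathfrak t$ fixed, $n=2^\ell$, and integrands $F\in G_p(M,B,\kappa)$. It yields
\begin{equation*}
    \operatorname{Var}\Big(\frac1n\sum_{i=1}^nF\big(\Phi_{\rm N}^{-1}(z^{(i)})\big)\Big)=O\big(n^{-2}(\log n)^{3s+\kappa-2}\big).
\end{equation*}
The mechanism behind this bound is as follows. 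One truncates to the box $[\Phi_{\rm N}^{-1}(\delta),\Phi_{\rm N}^{-1}(1-\delta)]^s$ with $\delta\asymp n^{-1}$. Inside the box, the mixed derivatives of $F\circ\Phi_{\rm N}^{-1}$ with respect to $z$ are controlled by $\prod_j 1/\phi(y_j)$ times a polynomial in $\|y\|$. Outside the box, the Gaussian tails control the $L^2$ contribution. This produces Owen-type boundary growth, which gives the logarithmic power $(\log n)^{\kappa}$.

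Applying this with $F=\widetilde G_h^\cl$ and $\kappa=\kappa_h$ gives the bound $C n^{-2}(\log n)^{3s+\kappa_h-2}$, where $C$ depends on $h$ through $s$, $M_h$, $B_h$, $\kappa_h$, but not on $n$. Finally, for any $\varepsilon>0$ we have $(\log n)^{3s+\kappa_h-2}\le n^{\varepsilon}$ for all sufficiently large $n$, since $s$ and $\kappa_h$ are fixed. This gives \eqref{eq:closed-direct-rqmc-variance}.

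The main obstacle is verifying that the hypotheses of the cited theorem match exactly. One point is that $G_p$ requires $S^q$-smoothness, meaning continuous mixed first-order derivatives in each coordinate. This is supplied by the $C^s$ assumption, which covers the Faà di Bruno orders $|w|\le s$. A second point is that the theorem is stated for Sobol' nets with fixed quality parameter $\mathfrak t$. If the reference requires a specific integrand normalization, for example a growth bound $\exp$-type rather than polynomial, polynomial growth is dominated by it, so it still applies. I would first check the precise statement in \cite{OuyangWangHe2024}, and if necessary redo the truncation argument directly along the lines sketched above.
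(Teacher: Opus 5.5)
Your proposal is correct and follows essentially the same route as the paper: you place $\widetilde G_h^\cl$ in $G_p(M_h,B_h,\kappa_h)$ via Lemma~\ref{lem:closed-rqmc-polynomial-growth}, apply the scrambled Sobol' bound of \cite[Corollary~4.8]{OuyangWangHe2024} (valid because nested uniform scrambling preserves the digital-net property) to get $O\bigl(n^{-2}(\log n)^{3s+\kappa_h-2}\bigr)$, absorb the logarithm into $n^{\varepsilon}$, and use the unbiasedness from Lemma~\ref{lem:closed-direct-rqmc-general} to identify the mean-square error with the variance. Your heuristic sketch of the truncation mechanism behind the cited bound is not needed, and attributing the logarithmic power to $\kappa$ alone is imprecise since the exponent is $3s+\kappa_h-2$, but this does not affect the argument.
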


\begin{proof}
Nested uniform scrambling preserves the digital-net property, so $Q_n$ satisfies the discrepancy condition in \cite[Corollary~4.8]{OuyangWangHe2024}. Moreover, Lemma~\ref{lem:closed-rqmc-polynomial-growth} gives $\widetilde G_h^\cl\in G_p\big(M_h,B_h,\kappa_h\big)$. Applying the cited result with integration dimension $s$ and
polynomial growth exponent $\kappa_h$ therefore yields
\begin{equation*}
    \mathbb E\Big[\big|\widehat u_{n,h}^{\RQMC,\cl}(x,T)-u_h^\cl(x,T)\big|^2\Big]\le C_{\RQMC,\cl}n^{-2}(\log n)^{3s+\kappa_h-2}.
\end{equation*}
Since $s$ and $\kappa_h$ are fixed, for any $\varepsilon>0$ the logarithmic factor can be absorbed into $n^\varepsilon$ for all sufficiently large $n$. Hence
\begin{equation*}
    \mathbb E\Big[\big|\widehat u_{n,h}^{\RQMC,\cl}(x,T)-u_h^\cl(x,T)\big|^2\Big]
    \le C_{\RQMC,\cl,\varepsilon}n^{-2+\varepsilon}.
\end{equation*}
By Lemma~\ref{lem:closed-direct-rqmc-general}, the estimator is
unbiased for $u_h^\cl(x,T)$, this ends the proof.
\end{proof}

We now turn to the Dirichlet problem. In contrast to the closed case, the first numerical exit changes the functional used to evaluate a path. Hence the fixed-$K$ RQMC integrand is generally discontinuous across interfaces that separate different first-exit events. We therefore decompose the stopped functional according to the first numerical exit step, which allows the contribution associated with each exit layer to be analyzed separately.

Using the same block decomposition $z=(z_1,\ldots,z_K)\in[0,1)^s$, $s=(r-1)K$, let $G_h^\bd:[0,1)^s\to\R$ denote the deterministic value of the stopped functional $S_h^\bd$ generated by the input $z$. Thus, $u_h^\bd(x,T)=\int_{[0,1)^s}G_h^\bd(z)\dd z$, and the direct RQMC estimator is
\begin{equation}
    \widehat u_{n,h}^{\RQMC,\bd}(x,T)=\frac1n\sum_{i=1}^nG_h^\bd\big(z^{(i)}\big).
    \label{eq:boundary-direct-rqmc-estimator}
\end{equation}
As in the closed case, if
$Q_n$ consists of the first $n=2^\ell$ points of a nested-uniformly scrambled Sobol' sequence in dimension $s$,
then each scrambled point $z^{(i)}$ is marginally uniform on $[0,1)^s$, and hence $\E\big[\widehat u_{n,h}^{\RQMC,\bd}(x,T)\big]=u_h^\bd(x,T)$. Moreover, Assumption~\ref{ass:bounded-data} gives $\big|G_h^\bd(z)\big|\le\max\{M_0,M_\psi\}+TM_f$ with $z\in[0,1)^s$, so that $G_h^\bd\in L^2([0,1)^s)$. Therefore, the same general scrambled-net variance result as in the closed case yields
\begin{equation}
    \operatorname{Var}\big(\widehat u_{n,h}^{\RQMC,\bd}(x,T)\big)=o\big(n^{-1}\big),
    \quad
    n\to\infty.
    \label{eq:boundary-direct-rqmc-general-rate}
\end{equation}
for every fixed $h$.


To obtain an explicit algebraic rate, we decompose $G_h^\bd$ according to the first-exit layer. For $k=1,\ldots,K$, set $s_k=(r-1)k$ and write $z^{[k]}=(z_1,\ldots,z_k)\in[0,1)^{s_k}$. For fixed $h$ and $x$, the prefix $z^{[k]}$ uniquely determines the p-Ret path up to layer $k$. Let $\mathcal A_{h,k}$ denote the set of input prefixes for which no numerical exit occurs during the first $k$ steps, with $\mathcal A_{h,0}=[0,1)^0$ and $\widetilde{\xi}_{h,0}^{\bd}=\xi_0$. Recursively, once $\mathcal A_{h,k-1}$ and the surviving path map $\widetilde{\xi}_{h,k-1}^{\bd}$ are defined, define the $k$-th crossing function for $z^{[k]}\in\mathcal A_{h,k-1}\times[0,1)^{r-1}$ by
\begin{equation}
    \rho_{h,k}\big(z^{[k]}\big)=\chi\Big(\Psi_h\big(\widetilde{\xi}_{h,k-1}^{\bd}\big(z^{[k-1]}\big),\mathcal D(z_k)\big)\Big).
    \label{eq:boundary-rqmc-crossing-function}
\end{equation}
The first-exit and survival sets admit the explicit representations
\begin{equation}
    \Theta_{h,k}=\Big\{z^{[k]}\in[0,1)^{s_k}:\rho_{h,j}\big(z^{[j]}\big)>0\text{ for }j=1,\ldots,k-1,\;\rho_{h,k}\big(z^{[k]}\big)\le0
    \Big\},
    \label{eq:boundary-rqmc-first-exit-set}
\end{equation}
and
\begin{equation}
    \mathcal A_{h,k}=\Big\{z^{[k]}\in[0,1)^{s_k}:\rho_{h,j}\big(z^{[j]}\big)>0\text{ for }j=1,\ldots,k\Big\}.
    \label{eq:boundary-rqmc-survival-set}
\end{equation}
For fixed $h$ and $x$, the functions $\rho_{h,j}$ and hence the sets $\Theta_{h,k}$ and $\mathcal A_{h,k}$ are deterministic; in particular, they are independent of the sample size $n$ and of the randomization of the RQMC point set. For $z^{[k]}\in\mathcal A_{h,k}$, define
\begin{equation}
    \widetilde{\xi}_{h,k}^\bd\big(z^{[k]}\big)=\Psi_h\big(\widetilde{\xi}_{h,k-1}^\bd\big(z^{[k-1]}\big),\mathcal D(z_k)\big),
    \quad
    X_{h,k}^\bd=\Phi\big(\widetilde{\xi}_{h,k}^\bd\big).
    \label{eq:boundary-rqmc-surviving-path}
\end{equation}
Thus, $\Theta_{h,k}$ consists of the input prefixes whose first numerical exit occurs during the $k$th p-Ret step, whereas $\mathcal A_{h,K}$ consists of the full inputs that survive until $T$.

For $z^{[k]}\in\Theta_{h,k}$, let $\lambda_{k-1}\big(z^{[k]}\big)\in(0,1]$ denote the boundary fraction defined by \eqref{eq:numerical-boundary-fraction} for the $k$th p-Ret step. The corresponding numerical exit time and boundary point are
\begin{equation*}
    \widehat\tau_h\big(z^{[k]}\big)=t_{k-1}+\lambda_{k-1}\big(z^{[k]}\big)h,
    \quad
    \widehat X_{\partial}\big(z^{[k]}\big)=\Phi\big(\gamma_{k-1}\big(\lambda_{k-1}(z^{[k]})\big)\big).
\end{equation*}
The contribution associated with a first numerical exit during the $k$-th p-Ret step is
\begin{equation}\begin{split}
H_{h,k}^\bd\big(z^{[k]}\big)=\;&\psi\big(\widehat X_{\partial}\big(z^{[k]}\big),T-\widehat\tau_h\big(z^{[k]}\big)\big)+h\sum_{j=0}^{k-2}f\big(X_{h,j}^\bd,T-t_j\big)\\
& +\lambda_{k-1}\big(z^{[k]}\big)hf\big(X_{h,k-1}^\bd,T-t_{k-1}\big),
\quad
z^{[k]}\in\Theta_{h,k}.
\label{eq:boundary-rqmc-exit-weight}
\end{split}\end{equation}
If no numerical exit occurs before $T$, the contribution is
\begin{equation}
    H_{h,K+1}^\bd(z)=u_0\big(X_{h,K}^\bd(z)\big)+h\sum_{j=0}^{K-1}f\big(X_{h,j}^\bd,T-t_j\big),
    \quad
    z\in\mathcal A_{h,K}.
    \label{eq:boundary-rqmc-no-exit-weight}
\end{equation}

The layer integrands are then given by
\begin{equation*}
    F_{h,k}^\bd=
    \begin{cases}
        H_{h,k}^\bd\,\1_{\Theta_{h,k}},& k=1,\ldots,K,\\[1mm]
        H_{h,K+1}^\bd\,\1_{\mathcal A_{h,K}},& k=K+1.
    \end{cases}
\end{equation*}
Then the stopped integrand admits the exact decomposition
\begin{equation}
    G_h^\bd(z)=\sum_{k=1}^KF_{h,k}^\bd\big(z^{[k]}\big)+F_{h,K+1}^\bd(z).
    \label{eq:boundary-rqmc-layer-decomposition}
\end{equation}

By the layer decomposition \eqref{eq:boundary-rqmc-layer-decomposition}, define
\begin{equation*}
    I_{h,k}^\bd=
    \begin{cases}
        \displaystyle\int_{[0,1)^{s_k}}F_{h,k}^\bd\big(z^{[k]}\big)\dd z^{[k]},& k=1,\ldots,K,\\[7pt]
        \displaystyle\int_{[0,1)^s}F_{h,K+1}^\bd(z)\dd z,& k=K+1.
    \end{cases}
\end{equation*}
Then we have
\begin{equation}
    u_h^\bd(x,T)
    =
    \int_{[0,1)^s}G_h^\bd(z)\dd z
    =
    \sum_{k=1}^{K+1} I_{h,k}^\bd.
    \label{eq:boundary-rqmc-layer-integrals}
\end{equation}

For each RQMC point $z^{(i)}$, write $z^{(i),[k]}=\big(z_1^{(i)},\ldots,z_k^{(i)}\big)$. The corresponding layer estimators are
\begin{equation*}
    \widehat I_{n,h,k}^\bd=\begin{cases}
        \displaystyle\frac1n\sum_{i=1}^nF_{h,k}^\bd\big(z^{(i),[k]}\big),& k=1,\ldots,K,\\[8pt]
        \displaystyle\frac1n\sum_{i=1}^nF_{h,K+1}^\bd\big(z^{(i)}\big),& k=K+1.
    \end{cases}
\end{equation*}
Hence,
\begin{equation}
    \widehat u_{n,h}^{\RQMC,\bd}(x,T)=\sum_{k=1}^{K+1}\widehat I_{n,h,k}^\bd.
    \label{eq:boundary-rqmc-layer-estimator}
\end{equation}





We next establish the geometric regularity of the discontinuity interfaces generated by the numerical first-exit mechanism. For a bounded set $E\subset\mathbb R^q$, define its upper codimension-one Minkowski content by
\begin{equation*}
    \overline{\mathbb M}_{q-1}(E):=\limsup_{\varepsilon\downarrow0}\frac{\operatorname{vol}_q\big(\{z\in\mathbb R^q:\operatorname{dist}(z,E)<\varepsilon\}\big)}{2\varepsilon}.
\end{equation*}

To control the geometry of the discontinuity sets, we introduce the following transversality and finite complexity condition.

\begin{assumption}
\label{ass:boundary-rqmc-transversality}
For every $k=1,\ldots,K$, set $s_k=(r-1)k$ and $Q_{s_k}=[0,1]^{s_k}$, and define the effective interior input domain $D_{h,k}:=\big(\mathcal A_{h,k-1}\times(0,1)^{r-1}\big)\cap(0,1)^{s_k}$ and the interior switching interface
\begin{equation*}
    \Gamma_{h,k}^{\circ}:=\Big\{z^{[k]}\in D_{h,k}:\rho_{h,k}\big(z^{[k]}\big)=0\Big\}.
\end{equation*}
The crossing function $\rho_{h,k}$ defined in \eqref{eq:boundary-rqmc-crossing-function} is continuous on $D_{h,k}$ and admits a $C^1$ extension to an open neighborhood of $\Gamma_{h,k}^{\circ}$ in $(0,1)^{s_k}$, still denoted by $\rho_{h,k}$. The interface is regular in the sense that
\begin{equation}
    \nabla\rho_{h,k}\big(z^{[k]}\big)\neq0,
    \quad
    z^{[k]}\in\Gamma_{h,k}^{\circ}.
    \label{eq:boundary-rqmc-transversality}
\end{equation}
Moreover, let $\Gamma_{h,k}:=\overline{\Gamma_{h,k}^{\circ}}^{\,Q_{s_k}}$ be the relative closure of the interior switching interface in $Q_{s_k}$. We assume that $\Gamma_{h,k}$ admits a finite $C^1$ graph cover. More precisely, there exists $N_{h,k}<\infty$ such that, for every $\nu=1,\ldots,N_{h,k}$, there are a coordinate permutation $P_{h,k,\nu}$, a bounded open set $V_{h,k,\nu}\subset\mathbb R^{s_k-1}$, a compact set $K_{h,k,\nu}\subset V_{h,k,\nu}$, and a function $g_{h,k,\nu}\in C^1(V_{h,k,\nu})$ satisfying
\begin{equation*}
    \Gamma_{h,k}\subset\bigcup_{\nu=1}^{N_{h,k}}P_{h,k,\nu}^{-1}\big\{\big(x,g_{h,k,\nu}(x)\big):x\in K_{h,k,\nu}\big\}.
\end{equation*}
\end{assumption}

The continuity condition identifies every interior boundary created by the sign test with a zero of the crossing function. The regularity condition gives the local hypersurface structure of the interior switching interface, while the finite graph cover controls its possible accumulation at the boundary of the RQMC cube. Together these properties yield codimension-one tubular-neighborhood estimates, which extend recursively to the boundaries of the first-exit and survival sets. The following lemma records these geometric consequences.

\begin{lemma}
\label{lem:boundary-rqmc-minkowski}
Suppose that Assumption~\ref{ass:boundary-rqmc-transversality} holds. Then, for every $k=1,\ldots,K$, the boundaries $\partial\Theta_{h,k}$ and $\partial\mathcal A_{h,k}$ have finite upper $(s_k-1)$-dimensional Minkowski content. 
More precisely, for all sufficiently small $\varepsilon>0$, it holds that
\begin{equation}
    \operatorname{vol}_{s_k}\Big(\big\{z\in\R^{s_k}:\operatorname{dist}(z,\partial E)<\varepsilon\big\}\Big)\le C_{h,k}\varepsilon,
    \quad
    E\in\big\{\Theta_{h,k},\mathcal A_{h,k}\big\},
    \label{eq:boundary-rqmc-exit-tube}
\end{equation}
where $C_{h,k}$ is a positive constant independent of $\varepsilon$.
In particular, $\partial\mathcal A_{h,K}$ has finite upper $(s-1)$-dimensional Minkowski content.
\end{lemma}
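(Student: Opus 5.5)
We argue by induction on $k$, proving simultaneously the tube estimate \eqref{eq:boundary-rqmc-exit-tube} for $\partial\Theta_{h,k}$ and $\partial\mathcal A_{h,k}$. The basic building block is a tube bound for a single interface. From the finite $C^1$ graph cover in Assumption~\ref{ass:boundary-rqmc-transversality}, each piece $P_{h,k,\nu}^{-1}\{(x,g_{h,k,\nu}(x)):x\in K_{h,k,\nu}\}$ is the graph of a $C^1$ function over a compact set. On a slightly larger compact neighborhood of $K_{h,k,\nu}$, still inside $V_{h,k,\nu}$, the function $g_{h,k,\nu}$ is Lipschitz with some constant $L_\nu$.

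For $\varepsilon$ small, a point within distance $\varepsilon$ of such a graph has its first $s_k-1$ coordinates within $\varepsilon$ of $K_{h,k,\nu}$. Its last coordinate then differs from $g_{h,k,\nu}$ by at most $(1+L_\nu)\varepsilon$. Hence, by Fubini, the $\varepsilon$-tube of each piece has volume at most
\[
2(1+L_\nu)\varepsilon\,\operatorname{vol}_{s_k-1}\big(K_{h,k,\nu}+B_\varepsilon\big)\le C_\nu\varepsilon .
\]
Summing over $\nu\le N_{h,k}$ gives $\operatorname{vol}_{s_k}(\{\operatorname{dist}(\cdot,\Gamma_{h,k})<\varepsilon\})\le C\varepsilon$. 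The transversality condition \eqref{eq:boundary-rqmc-transversality} is what makes the interior interface a genuine hypersurface. The graph cover is the quantitative form we actually use; transversality enters mainly to justify that $\Gamma_{h,k}$ is the relevant set.

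Next we identify the boundaries. Set $D_{h,k}=(\mathcal A_{h,k-1}\times(0,1)^{r-1})\cap(0,1)^{s_k}$. Using \eqref{eq:boundary-rqmc-first-exit-set}--\eqref{eq:boundary-rqmc-survival-set}, we have $\Theta_{h,k}=\{z\in\mathcal A_{h,k-1}\times[0,1)^{r-1}:\rho_{h,k}\le0\}$ and $\mathcal A_{h,k}=\{z\in\mathcal A_{h,k-1}\times[0,1)^{r-1}:\rho_{h,k}>0\}$. We claim that
\[
\partial\Theta_{h,k}\cup\partial\mathcal A_{h,k}\subset\Gamma_{h,k}\cup\big(\partial\mathcal A_{h,k-1}\times[0,1]^{r-1}\big)\cup\partial[0,1]^{s_k}.
\]
To see this, take a point of $D_{h,k}$ with $\rho_{h,k}\neq0$. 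By continuity of $\rho_{h,k}$ on $D_{h,k}$ and openness of $D_{h,k}$ relative to the interior of the cube, it has a neighborhood lying entirely in one of the two sets. So boundary points in $D_{h,k}$ lie in $\Gamma_{h,k}^\circ$. Boundary points outside $D_{h,k}$ lie either on the cube boundary or over $\partial\mathcal A_{h,k-1}$.

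For the openness of $D_{h,k}$ we need the interior of $\mathcal A_{h,k-1}$ to differ from $\mathcal A_{h,k-1}$ only inside $\partial\mathcal A_{h,k-1}$. This is automatic, since $\mathcal A\setminus\operatorname{int}\mathcal A\subset\partial\mathcal A$. Points of $\mathcal A_{h,k-1}\times(0,1)^{r-1}$ that are not interior are therefore already contained in the second set on the right.

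The tube of the union is bounded by the sum of the three tubes:
\begin{itemize}
\item The tube of $\Gamma_{h,k}$ has volume $O(\varepsilon)$ by the first step.
\item The tube of $\partial[0,1]^{s_k}$ has volume at most $2s_k\varepsilon$.
\item For the product term, a point within $\varepsilon$ of $\partial\mathcal A_{h,k-1}\times[0,1]^{r-1}$ has its first $s_{k-1}$ coordinates within $\varepsilon$ of $\partial\mathcal A_{h,k-1}$ and its last coordinates in $[-\varepsilon,1+\varepsilon]^{r-1}$. Its volume is therefore at most $(1+2\varepsilon)^{r-1}C_{h,k-1}\varepsilon$ by the induction hypothesis.
\end{itemize}
The base case $k=1$ has $\mathcal A_{h,0}$ equal to the full (zero-dimensional) domain, so the product term is absent. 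Taking $k=K$ with $s_K=s$ gives the final claim, and finite upper Minkowski content follows by dividing by $2\varepsilon$.

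The main obstacle is the boundary identification step. We must make sure that no part of $\partial\Theta_{h,k}$ escapes the three pieces, for instance through accumulation of $\Gamma^\circ_{h,k}$ at the cube boundary, or through points where $\rho_{h,k}$ is only defined on the non-open set $\mathcal A_{h,k-1}\times[0,1)^{r-1}$. The relative closure $\Gamma_{h,k}$ together with the explicit inclusion of $\partial[0,1]^{s_k}$ and $\partial\mathcal A_{h,k-1}\times[0,1]^{r-1}$ is designed to absorb exactly these points. We will check this carefully with a case analysis: interior of $D_{h,k}$, the set $\partial D_{h,k}$ inside the open cube, and the cube boundary.
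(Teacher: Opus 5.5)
Your proposal is correct and follows essentially the paper's route. You use the same graph-cover/Fubini tube bound and the same one-step inclusion $\partial\Theta_{h,k}\cup\partial\mathcal A_{h,k}\subset\Gamma_{h,k}\cup(\partial\mathcal A_{h,k-1}\times[0,1]^{r-1})\cup\partial[0,1]^{s_k}$; the only difference is that you induct on the tube estimate for $\partial\mathcal A_{h,k-1}$ instead of unrolling the inclusion into a union of lifted interfaces $\Gamma_{h,j}\times Q_{s_k-s_j}$, which is cosmetic. One harmless slip: the $\varepsilon$-tube of $\partial[0,1]^{s_k}$ in $\mathbb R^{s_k}$ extends on both sides of each face, so its volume is about $4s_k\varepsilon$ rather than $2s_k\varepsilon$, but it is still $O(\varepsilon)$.
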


\begin{proof}
Fix $k\in\{1,\ldots,K\}$ and set $Q_q=[0,1]^q$ for $q\ge1$, with the convention that $Q_0$ consists of a single point. Notice first that
\begin{equation*}
    \operatorname{vol}_q\Big(\big\{z\in\R^q:\operatorname{dist}(z,\partial Q_q)<\varepsilon\big\}\Big)=O(\varepsilon),
    \quad
    \varepsilon\downarrow0.
\end{equation*}

Fix $j\in\{1,\ldots,k\}$. By Assumption~\ref{ass:boundary-rqmc-transversality}, $\Gamma_{h,j}$ is covered by finitely many coordinate permutations of graphs $G_{h,j,\nu}:=\big\{\big(x,g_{h,j,\nu}(x)\big):x\in K_{h,j,\nu}\big\}$. Coordinate permutations preserve Euclidean distance and Lebesgue measure, so it suffices to estimate the tubular neighborhood of $G_{h,j,\nu}$. Since $K_{h,j,\nu}$ is compactly contained in $V_{h,j,\nu}$, there exists $\delta_{h,j,\nu}>0$ such that the closed $\delta_{h,j,\nu}$-neighborhood of $K_{h,j,\nu}$ is contained in $V_{h,j,\nu}$. Because $g_{h,j,\nu}\in C^1(V_{h,j,\nu})$, there is a constant $L_{h,j,\nu}<\infty$ such that $\big\|\nabla g_{h,j,\nu}(x)\big\|\le L_{h,j,\nu}$ throughout this closed neighborhood.

For $0<\varepsilon<\delta_{h,j,\nu}$, let $(K_{h,j,\nu})_\varepsilon:=\big\{y\in\mathbb R^{s_j-1}:\operatorname{dist}(y,K_{h,j,\nu})<\varepsilon\big\}$. If $z=(z',z_{s_j})\in(G_{h,j,\nu})_\varepsilon$, then there exists $x\in K_{h,j,\nu}$ such that $|z'-x|<\varepsilon$, and $\big|z_{s_j}-g_{h,j,\nu}(x)\big|<\varepsilon$. Hence $z'\in(K_{h,j,\nu})_\varepsilon$. Moreover, the mean value theorem gives
\begin{equation*}
\begin{split}
    \big|z_{s_j}-g_{h,j,\nu}(z')\big|&\le\big|z_{s_j}-g_{h,j,\nu}(x)\big|+\big|g_{h,j,\nu}(x)-g_{h,j,\nu}(z')\big|\\
    &<(1+L_{h,j,\nu})\varepsilon.
\end{split}
\end{equation*}
Therefore,
\begin{equation*}
    (G_{h,j,\nu})_\varepsilon\subset\Big\{(z',z_{s_j}):z'\in(K_{h,j,\nu})_\varepsilon,\ \big|z_{s_j}-g_{h,j,\nu}(z')\big|<(1+L_{h,j,\nu})\varepsilon\Big\}.
\end{equation*}
Consequently, by Fubini's theorem,
\begin{equation*}
    \operatorname{vol}_{s_j}\big((G_{h,j,\nu})_\varepsilon\big)\le2(1+L_{h,j,\nu})\varepsilon\,\operatorname{vol}_{s_j-1}\big((K_{h,j,\nu})_\varepsilon\big)\le C_{h,j,\nu}\varepsilon,
\end{equation*}
where the last inequality follows because the $\varepsilon$-neighborhoods of the bounded set $K_{h,j,\nu}$ have uniformly bounded volume for sufficiently small $\varepsilon$. Summing over the finite graph cover gives
\begin{equation}
    \operatorname{vol}_{s_j}\Big(\big\{z\in\R^{s_j}:\operatorname{dist}(z,\Gamma_{h,j})<\varepsilon\big\}\Big)\le C_{h,j}^{\Gamma}\varepsilon.
    \label{eq:boundary-rqmc-crossing-tube}
\end{equation}

For $1\le j\le k$, lift the $j$-th interface to the $s_k$-dimensional input space as $\Gamma_{h,j}\times Q_{s_k-s_j}$. For a set $E\subset\R^q$ and $\varepsilon>0$, let $E_\varepsilon=\{z:\operatorname{dist}(z,E)<\varepsilon\}$. Then,
\begin{equation*}
    \big(\Gamma_{h,j}\times Q_{s_k-s_j}\big)_\varepsilon\subset\big(\Gamma_{h,j}\big)_\varepsilon\times\big(Q_{s_k-s_j}\big)_\varepsilon.
\end{equation*}
Since $\operatorname{vol}_{s_k-s_j}\big((Q_{s_k-s_j})_\varepsilon\big)\le(1+2\varepsilon)^{s_k-s_j}$, \eqref{eq:boundary-rqmc-crossing-tube} gives
\begin{equation*}
    \operatorname{vol}_{s_k}\Big(\big(\Gamma_{h,j}\times Q_{s_k-s_j}\big)_\varepsilon\Big)\le C_{h,j}^{\Gamma}\varepsilon\big(1+2\varepsilon\big)^{s_k-s_j}\le C_{h,j,k}\varepsilon
\end{equation*}
for sufficiently small $\varepsilon$.

It remains to identify the possible boundaries of the first-exit and survival sets. Set $B_{h,k}:=\mathcal A_{h,k-1}\times[0,1)^{r-1}$. By \eqref{eq:boundary-rqmc-first-exit-set} and \eqref{eq:boundary-rqmc-survival-set}, both $\mathcal A_{h,k}$ and $\Theta_{h,k}$ are subsets of $B_{h,k}$. Using the elementary inclusion $\partial(E\times F)\subset\big(\partial E\times\overline F\big)\cup\big(\overline E\times\partial F\big)$, together with $\overline{[0,1)^{r-1}}=Q_{r-1}$ and $\overline{\mathcal A}{h,k-1}\times\partial Q{r-1}\subset\partial Q_{s_k}$, we obtain
\begin{equation*}
    \partial B_{h,k}
    \subset
    \big(\partial\mathcal A_{h,k-1}\times Q_{r-1}\big)
    \cup
    \partial Q_{s_k}.
\end{equation*}

Let $z\in\big(\partial\mathcal A_{h,k}\cup\partial\Theta_{h,k}\big)\setminus\Big[\big(\partial\mathcal A_{h,k-1}\times Q_{r-1}\big)\cup\partial Q_{s_k}\Big]$. Since $z$ is a boundary point of a subset of $B_{h,k}$ and $z\notin\partial B_{h,k}$, it follows that $z$ is an interior point of $B_{h,k}$. Since also $z\notin\partial Q_{s_k}$, we have $z\in D_{h,k}$. If $\rho_{h,k}(z)>0$, continuity of $\rho_{h,k}$ on $D_{h,k}$ would make $z$ an interior point of $\mathcal A_{h,k}$; if $\rho_{h,k}(z)<0$, it would make $z$ an interior point of $\Theta_{h,k}$. Either conclusion contradicts the choice of $z$. Therefore, $\rho_{h,k}(z)=0$, and hence $z\in\Gamma_{h,k}^{\circ}\subset\Gamma_{h,k}$. We have thus proved
\begin{equation*}
    \partial\mathcal A_{h,k}\cup\partial\Theta_{h,k}\subset\big(\partial\mathcal A_{h,k-1}\times Q_{r-1}\big)\cup\Gamma_{h,k}\cup\partial Q_{s_k}.
\end{equation*}

Repeatedly applying this inclusion to $\partial\mathcal A_{h,k-1},\ldots,\partial\mathcal A_{h,1}$, with $\partial\mathcal A_{h,0}=\varnothing$, and lifting each switching interface to the $s_k$-dimensional input space yields

\begin{equation}
    \partial\mathcal A_{h,k}\cup\partial\Theta_{h,k}\subset\partial Q_{s_k}\cup\bigcup_{j=1}^k\Big(\Gamma_{h,j}\times Q_{s_k-s_j}\Big).
    \label{eq:boundary-rqmc-interface-cover}
\end{equation}
Taking $\varepsilon$-neighborhoods in \eqref{eq:boundary-rqmc-interface-cover}, using finite subadditivity of Lebesgue measure, and applying the preceding tube estimates yields
\begin{equation*}
    \operatorname{vol}_{s_k}\Big(\big(\partial E\big)_\varepsilon\Big)\le C_{h,k}\varepsilon,
    \quad
    E\in\big\{\Theta_{h,k},\mathcal A_{h,k}\big\}.
\end{equation*}

This proves \eqref{eq:boundary-rqmc-exit-tube}, and hence, by the definition of the upper Minkowski content, $\overline{\mathbb M}_{s_k-1}(\partial E)<\infty$ for $E\in\{\Theta_{h,k},\mathcal A_{h,k}\}$.
\end{proof}

The preceding lemma controls the geometry of the discontinuity sets. To control the singular behavior of the layer weights near the boundary of the unit cube, we use the Gaussian coordinates introduced in \eqref{eq:closed-rqmc-gaussian-integrand} and impose the following polynomial-growth condition.

\begin{assumption}
\label{ass:boundary-rqmc-weight-regularity}
Fix $h$ and let $s=(r-1)K$. Assume that the conditions of Assumption~\ref{ass:closed-rqmc-smoothness} hold for the Dirichlet problem and that the coordinate boundary data $\overline\psi$ admits a $C^s$ extension to the parameter covering space times $\mathbb R$, whose derivatives up to order $s$ have polynomial growth.

For every $k=1,\ldots,K$, the Gaussian-transformed boundary fraction admits an extension $\widetilde\lambda_{h,k-1}\in G_p(M_{h,k},B_{h,k},\kappa_{h,k})$ that agrees with $\lambda_{k-1}\circ\Phi_{\rm N}$ on $\Phi_{\rm N}^{-1}\big(\Theta_{h,k}\cap(0,1)^{s_k}\big)$.
\end{assumption}

For $t\in(0,1)$, set $\phi(t)=\min\{t,1-t\}$. A function $F\in L^2([0,1]^q)$ is said to satisfy Owen's boundary growth condition (cf.~\cite{Owen2006Halton}) if its mixed derivatives $\partial_z^vF$ exist on $(0,1)^q$ and there exist exponents $A_1,\ldots,A_q<1/2$ such that, for every $v\subseteq 1:q$, we have
\begin{equation*}
    \big|\partial_z^vF(z)\big|
    \le
    C_v\prod_{j=1}^q\phi(z_j)^{-A_j-\1_{\{j\in v\}}},
    \quad z\in(0,1)^q,
\end{equation*}
where $C_v$ is independent of $z$.

In the present setting, we take $A_1=\cdots=A_q=A$ for an arbitrary $A\in(0,1/2)$. The following lemma shows that the smooth extension of every layer weight belongs to $L^2([0,1]^q)$ and satisfies Owen's boundary growth condition.

\begin{lemma}
\label{lem:boundary-rqmc-boundary-growth}
Suppose that Assumption~\ref{ass:boundary-rqmc-weight-regularity} holds. For every $k=1,\ldots,K$, there exists $H_{h,k}^{\bd,\mathrm{ext}}\in L^2\big((0,1)^{s_k}\big)$ which agrees with $H_{h,k}^\bd$ on $\Theta_{h,k}\cap(0,1)^{s_k}$ and, for each $A\in(0,1/2)$ and every $v\subseteq1:s_k$, satisfies
\begin{equation}
\big|\partial_z^v H_{h,k}^{\bd,\mathrm{ext}}(z)\big|\le C_{h,k,A}\prod_{j=1}^{s_k}\phi(z_j)^{-A-\1_{\{j\in v\}}},
\quad
z\in(0,1)^{s_k}.
\label{eq:boundary-rqmc-boundary-growth}
\end{equation}
Moreover, there exists $H_{h,K+1}^{\bd,\mathrm{ext}}\in L^2\big((0,1)^s\big)$ which agrees with $H_{h,K+1}^\bd$ on $\mathcal A_{h,K}\cap(0,1)^s$ and satisfies \eqref{eq:boundary-rqmc-boundary-growth} with $s_k$ replaced by $s$.
\end{lemma}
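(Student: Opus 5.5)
The plan is to build each extension in Gaussian coordinates, where Lemma~\ref{lem:closed-rqmc-polynomial-growth} already provides polynomial-growth control, and then transfer the bounds back to the unit cube through $z=\Phi_{\rm N}(y)$. The key estimates are those for $\Phi_{\rm N}^{-1}$ near $0$ and $1$.

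\textbf{Step 1: the extension in Gaussian coordinates.} Fix $k$ and work in $\mathbb R^{s_k}$. On the unwrapped covering space, the surviving path maps $\widetilde\xi^{\bd}_{h,j}\circ\Phi_{\rm N}$, $j\le k-1$, are given by the same recursion as in the closed case. Their obvious extensions, obtained by dropping the survival indicator, therefore have mixed derivatives in $C_p$, by exactly the Fa\`a di Bruno induction of Lemma~\ref{lem:closed-rqmc-polynomial-growth}.

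In \eqref{eq:boundary-rqmc-exit-weight}, replace $\lambda_{k-1}\circ\Phi_{\rm N}$ by the extension $\widetilde\lambda_{h,k-1}$ of Assumption~\ref{ass:boundary-rqmc-weight-regularity}. Then:
\begin{itemize}
\item the interpolated point $\gamma_{k-1}(\widetilde\lambda_{h,k-1})$ is a polynomial in $\widetilde\lambda_{h,k-1}$, $V_h$ and $\Gamma$ evaluated along the path;
\item $\overline\psi$ is evaluated at this point and at $T-t_{k-1}-\widetilde\lambda_{h,k-1}h$;
\item $\overline f$ is evaluated along the path.
\end{itemize}
Call the resulting smooth function $\widetilde H_{h,k}$ on $\mathbb R^{s_k}$. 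The $C^s$ and polynomial-growth hypotheses on $\overline\psi$, $\overline f$, $\Gamma$ and $b,\Sigma$, the $G_p$ property of $\widetilde\lambda_{h,k-1}$, and Lemma~\ref{lem:gaussian-sphere-transform} combine through Leibniz and Fa\`a di Bruno expansions. This gives $\widetilde H_{h,k}\in G_p(M,B,\kappa)$ for some constants. All mixed derivatives have order at most $s_k\le s$, so the $C^s$ regularity suffices.

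Set $H^{\bd,\mathrm{ext}}_{h,k}=\widetilde H_{h,k}\circ\Phi_{\rm N}^{-1}$. It agrees with $H^{\bd}_{h,k}$ on $\Theta_{h,k}\cap(0,1)^{s_k}$ by construction. The case $k=K+1$ is handled identically, without any $\lambda$ factor, and is essentially $\widetilde G^{\cl}_h$ built from the data $u_0,f$.

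\textbf{Step 2: transfer to the cube.} Since $\Phi_{\rm N}^{-1}$ acts componentwise, the chain rule gives
\[
\partial_z^v H^{\bd,\mathrm{ext}}_{h,k}(z)=(\partial_y^v\widetilde H_{h,k})(y)\prod_{j\in v}\frac{1}{\Phi_{\rm N}'(y_j)},\qquad y=\Phi_{\rm N}^{-1}(z),
\]
with no higher derivatives of $\Phi_{\rm N}^{-1}$ appearing, because each variable is differentiated at most once. Two standard Mills-ratio estimates then control every factor:
\begin{itemize}
\item $|\Phi_{\rm N}^{-1}(t)|\le C\sqrt{\log(1/\phi(t))}$;
\item $1/\Phi_{\rm N}'(\Phi_{\rm N}^{-1}(t))\le C\,\phi(t)^{-1}\big(1+\sqrt{\log(1/\phi(t))}\big)^{-1}\le C\phi(t)^{-1}$.
\end{itemize}
For the growth factor, use $\|y\|_2^\kappa\le C\sum_j|y_j|^\kappa$, hence
\[
M\|y\|_2^\kappa+B\le C\prod_{j}\big(1+\log(1/\phi(z_j))\big)^{\kappa/2}.
\]
For any $A>0$ each factor satisfies $(1+\log(1/\phi))^{\kappa/2}\le C_A\phi^{-A}$. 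Multiplying these bounds yields \eqref{eq:boundary-rqmc-boundary-growth}.

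\textbf{Step 3: square integrability.} Taking $v=\varnothing$ gives $|H^{\bd,\mathrm{ext}}_{h,k}|\le C\prod_j\phi(z_j)^{-A}$ with $A<1/2$. This bound is square integrable on the cube, so $H^{\bd,\mathrm{ext}}_{h,k}\in L^2((0,1)^{s_k})$.

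\textbf{Main obstacle.} The delicate part is Step 1: checking that composing $\overline\psi$ with the exit point and exit time keeps polynomial growth of all mixed derivatives, uniformly with respect to the time argument. One must also make sure that the extensions of the surviving path and of $\widetilde\lambda_{h,k-1}$ are used consistently on the whole of $\mathbb R^{s_k}$ rather than only on $\Theta_{h,k}$. I would handle this by proving a small closure lemma: $C_p$ functions with $C_p$ mixed derivatives are stable under composition with $C^s$ maps whose derivatives have polynomial growth. This is exactly the argument already carried out in Lemma~\ref{lem:closed-rqmc-polynomial-growth}, and I would invoke it for both ingredients.
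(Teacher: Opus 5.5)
Your proposal is correct and follows the paper's route. You build $\widetilde H_{h,k}^{\bd}$ along the unrestricted Gaussian-coordinate path with $\widetilde\lambda_{h,k-1}$ substituted, obtain $G_p$ bounds from Lemma~\ref{lem:closed-rqmc-polynomial-growth} together with Leibniz/Fa\`a di Bruno, pull back through the componentwise $\Phi_{\rm N}^{-1}$ using the quantile and Mills-ratio estimates, and take $v=\varnothing$ for square integrability; your explicit logarithmic bound on $\|\Phi_{\rm N}^{-1}(z)\|_2^\kappa$ and your remark that the no-exit layer is the closed-case integrand $\widetilde G_h^{\cl}$ just spell out steps the paper states briefly.
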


\begin{proof}
Fix $k\in\{1,\ldots,K\}$. Define $\widetilde H_{h,k}^\bd$ on $\mathbb R^{s_k}$ by evaluating the right-hand side of \eqref{eq:boundary-rqmc-exit-weight} along the unrestricted Gaussian-coordinate path and replacing the boundary fraction by $\widetilde\lambda_{h,k-1}$. By construction,
\begin{equation*}
\widetilde H_{h,k}^\bd(y)=H_{h,k}^\bd\big(\Phi_{\rm N}(y)\big),
\quad
y\in\Phi_{\rm N}^{-1}\big(\Theta_{h,k}\cap(0,1)^{s_k}\big).
\end{equation*}
By Lemma~\ref{lem:closed-rqmc-polynomial-growth}, Assumption~\ref{ass:boundary-rqmc-weight-regularity}, and repeated applications of the product rule and the multivariate Fa\`a di Bruno formula, there exist $M,B,\kappa>0$ such that
\begin{equation}
\sup_{v\subseteq1:s_k}
\big|\partial_y^v\widetilde H_{h,k}^\bd(y)\big|
\le
M\|y\|_2^\kappa+B,
\quad
y\in\mathbb R^{s_k}.
\label{eq:boundary-rqmc-gaussian-weight-growth}
\end{equation}
Indeed, the unrestricted path and all of its mixed derivatives have polynomial growth by \eqref{eq:path-mixed-derivatives-Cp}; the transformed direction map and its mixed derivatives are bounded by Lemma~\ref{lem:gaussian-sphere-transform}; and the remaining factors are the polynomial-growth extensions of $\overline f$, $\overline\psi$, and $\widetilde\lambda_{h,k-1}$. Thus every term produced by differentiating the finite expression \eqref{eq:boundary-rqmc-exit-weight} has polynomial growth.

Define
\begin{equation*}
H_{h,k}^{\bd,\mathrm{ext}}(z)
=
\widetilde H_{h,k}^\bd\big(\Phi_{\rm N}^{-1}(z)\big),
\quad
z\in(0,1)^{s_k},
\end{equation*}
where $\Phi_{\rm N}^{-1}$ acts componentwise. Let $y=\Phi_{\rm N}^{-1}(z)$ and write $\varphi_{\rm N}=\Phi_{\rm N}'$ for the standard normal density. Since each coordinate is differentiated at most once,
\begin{equation*}
\partial_z^v H_{h,k}^{\bd,\mathrm{ext}}(z)
=
\partial_y^v\widetilde H_{h,k}^\bd(y)
\prod_{j\in v}\frac{1}{\varphi_{\rm N}(y_j)}.
\end{equation*}
Standard Gaussian tail estimates imply, for every $A>0$,
\begin{equation*}
1+\|\Phi_{\rm N}^{-1}(z)\|_2^\kappa
\le
C_{A,\kappa,s_k}
\prod_{j=1}^{s_k}\phi(z_j)^{-A},
\quad
\frac{1}{\varphi_{\rm N}\big(\Phi_{\rm N}^{-1}(t)\big)}
\le
\frac{C}{\phi(t)}.
\end{equation*}
These estimates follow from the Gaussian quantile asymptotics and the two-sided Mills ratio. Combining these estimates with \eqref{eq:boundary-rqmc-gaussian-weight-growth} gives
\begin{equation*}
\big|\partial_z^v H_{h,k}^{\bd,\mathrm{ext}}(z)\big|
\le
C_{h,k,A}
\prod_{j=1}^{s_k}\phi(z_j)^{-A}
\prod_{j\in v}\phi(z_j)^{-1}
=
C_{h,k,A}
\prod_{j=1}^{s_k}\phi(z_j)^{-A-\1_{\{j\in v\}}},
\end{equation*}
which proves \eqref{eq:boundary-rqmc-boundary-growth}.

For the no-exit layer, use the unrestricted Gaussian path in \eqref{eq:boundary-rqmc-no-exit-weight}. The same construction and estimates, now without a boundary fraction and with $s$ in place of $s_k$, give the asserted extension $H_{h,K+1}^{\bd,\mathrm{ext}}$. Finally, taking $v=\varnothing$ in the corresponding boundary growth estimates gives
\begin{equation*}
\int_{(0,1)^{s_k}}
\big|H_{h,k}^{\bd,\mathrm{ext}}(z)\big|^2\dd z
\le
C_{h,k,A}
\prod_{j=1}^{s_k}
\int_0^1\phi(t)^{-2A}\dd t
<\infty,
\end{equation*}
because $2A<1$. The same argument applies to the no-exit extension.
\end{proof}

We can now combine the analytic boundary growth in Lemma~\ref{lem:boundary-rqmc-boundary-growth} with the geometric tube estimate in Lemma~\ref{lem:boundary-rqmc-minkowski}.

\begin{lemma}
\label{lem:boundary-direct-rqmc-variance}
Suppose that Assumptions~\ref{ass:bounded-data}, \ref{ass:boundary-rqmc-transversality}, and \ref{ass:boundary-rqmc-weight-regularity} hold. Let $Q_n$ consist of the first $n=2^\ell$ points of a nested-uniformly scrambled Sobol' sequence in dimension $s=(r-1)K$. Then, for fixed $h$ and every $\varepsilon>0$, 
it holds that
\begin{equation}
\operatorname{Var}\big(\widehat u_{n,h}^{\mathrm{RQMC},\partial}(x,T)\big)\le C_{\mathrm{RQMC},\partial,\varepsilon}n^{-1-1/s+\varepsilon},
\label{eq:boundary-direct-rqmc-variance}
\end{equation}
where $C_{\mathrm{RQMC},\partial,\varepsilon}$ is a positive constant independent of $n$.
\end{lemma}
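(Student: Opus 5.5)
The plan is to use the exact layer decomposition \eqref{eq:boundary-rqmc-layer-estimator} and bound each layer separately. Unbiasedness already holds, so the variance equals the mean-squared error. By Cauchy--Schwarz over the $K+1$ layers,
\begin{equation*}
\operatorname{Var}\big(\widehat u_{n,h}^{\RQMC,\bd}(x,T)\big)\le (K+1)\sum_{k=1}^{K+1}\operatorname{Var}\big(\widehat I_{n,h,k}^\bd\big).
\end{equation*}
Since $K$ is fixed, it therefore suffices to show that $\operatorname{Var}(\widehat I_{n,h,k}^\bd)=O(n^{-1-1/s+\varepsilon})$ for each $k$.

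For $k\le K$, the layer estimator evaluates $F_{h,k}^\bd$ only on the first $s_k$ coordinates of the scrambled Sobol' points. The projection of a nested-uniformly scrambled digital $(\mathfrak t,\ell,s)$-net onto the first $s_k$ coordinates is again a scrambled digital $(\mathfrak t_k,\ell,s_k)$-net, with $\mathfrak t_k\le\mathfrak t$ and the same randomization structure. Each layer is therefore an RQMC integration problem in dimension $s_k\le s$.

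Within layer $k$, I would write
\begin{equation*}
F_{h,k}^\bd=H_{h,k}^{\bd,\mathrm{ext}}\,\1_{\Theta_{h,k}}
\end{equation*}
on $(0,1)^{s_k}$, where the extension comes from Lemma~\ref{lem:boundary-rqmc-boundary-growth}. The last layer is written analogously, as $H_{h,K+1}^{\bd,\mathrm{ext}}\1_{\mathcal A_{h,K}}$. This puts us in the setting of the variance results for scrambled nets applied to a function that satisfies Owen's boundary growth condition, multiplied by the indicator of a set whose boundary has finite upper Minkowski content. Such results appear in \cite{He2018,HeWang2015,HoOwen2026RQMC}. They give a rate of $O(n^{-1-1/q+\varepsilon'})$ in dimension $q$, where the loss $\varepsilon'$ comes from the exponent $A$ in the boundary growth condition and from logarithmic factors.

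The inputs required by that theorem are the following.
\begin{itemize}
\item[(i)] $L^2$ membership and the growth bound \eqref{eq:boundary-rqmc-boundary-growth} for every $A\in(0,1/2)$. Both are given by Lemma~\ref{lem:boundary-rqmc-boundary-growth}.
\item[(ii)] The tube estimate $\operatorname{vol}((\partial\Theta_{h,k})_\delta)\le C\delta$. This is given by Lemma~\ref{lem:boundary-rqmc-minkowski}, and the same lemma covers $\partial\mathcal A_{h,K}$.
\end{itemize}
Applying the theorem in dimension $s_k$ yields $O(n^{-1-1/s_k+\varepsilon})$. Because $s_k\le s$, we have $n^{-1/s_k}\le n^{-1/s}$, so this is $O(n^{-1-1/s+\varepsilon})$. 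The worst layer is the last one, in full dimension $s$.

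The mechanism behind the cited theorem, which I would sketch if it must be reproved, is the following. Split the ANOVA/Walsh variance contributions at resolution level $L\approx(\log_2 n)/s$. Cubes of side $2^{-L}$ that meet $\partial\Theta$ number $O(2^{L(s-1)})$ by the Minkowski bound. These cubes contribute at most the local $L^2$ mass of the integrand, times $n^{-1}$, times the fraction of such cubes, which gives $n^{-1}\cdot n^{-1/s}$ up to the boundary-growth weight. On the cubes lying entirely inside or outside $\Theta$, the function is smooth with growth controlled by \eqref{eq:boundary-rqmc-boundary-growth}, and the smooth scrambled-net estimate gives a higher-order contribution. The singular weights near $\partial[0,1]^{s}$ are handled by Hölder's inequality with the exponent $A$ chosen small enough relative to $\varepsilon$.

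The main obstacle is verifying precisely that the hypotheses of the cited discontinuous-integrand theorem match the ones available here. That theorem is typically stated for a bounded or boundary-growth $f$ times the indicator of a set with a Minkowski-content boundary, and we have to check that the interaction between the singular growth near the faces of the cube and the interface near those faces is allowed. This is exactly why Assumption~\ref{ass:boundary-rqmc-transversality} includes $\partial Q_{s_k}$ and a finite graph cover up to the closure. I would first try to invoke \cite{HoOwen2026RQMC} directly, taking $A$ as a function of $\varepsilon$. If that fails, I would fall back on the level-splitting argument above with a Hölder bound on the boundary-strip contributions.
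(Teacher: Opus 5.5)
Your structure is the same as the paper's proof. You use the layer decomposition \eqref{eq:boundary-rqmc-layer-estimator} with Cauchy--Schwarz over the $K+1$ layers, and the fact that the first $s_k$ coordinates of the scrambled Sobol' points again form a nested-uniformly scrambled net. You write $F_{h,k}^\bd=H_{h,k}^{\bd,\mathrm{ext}}\1_{\Theta_{h,k}}$, with inputs from Lemmas~\ref{lem:boundary-rqmc-boundary-growth} and \ref{lem:boundary-rqmc-minkowski}. You then apply a per-layer black-box bound for scrambled nets and conclude that the dimension-$s$ layer dominates.

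The step that does not deliver the stated exponent is the black box you cite. For a nonconstant (smooth or boundary-growth) weight times an indicator, \cite{He2018,HeWang2015} give RMSE $O(n^{-1/2-1/(4q-2)+\epsilon})$, i.e.\ variance $O(n^{-1-1/(2q-1)+\epsilon})$. The $n^{-1-1/q}$ rate in \cite{HeWang2015} is proved only for certain pure indicator functions, and the layer integrands are not of that form. Relying on those results would give only $n^{-1-1/(2s-1)+\varepsilon}$.

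The paper instead invokes \cite[Lemma~4.5]{Liu2026BoundaryGrowth}. That result is designed for an Owen boundary-growth weight times the indicator of a set with Minkowski-content boundary. For every $A\in(0,1/2)$ and $\delta\in(0,1-2A)$ it yields
\[
\operatorname{Var}\big(\widehat I_{n,h,k}^\bd\big)\le C_{h,k,A,\delta}\,n^{-1-\frac{(1-2A)(1-2A-\delta)}{s_k-2A-\delta}}(\log n)^{\frac{(s_k-1)(1-2A-\delta)}{s_k-2A-\delta}}.
\]
Letting $A,\delta\downarrow0$ and absorbing the logarithm into $n^{\varepsilon}$ gives $n^{-1-1/s_k+\varepsilon}$. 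The paper also notes that this lemma assumes the Minkowski content exists, whereas Lemma~\ref{lem:boundary-rqmc-minkowski} supplies only the upper tube bound; this suffices because the lemma's proof uses only that bound.

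Your level-splitting fallback is the right mechanism behind such a result, and it too needs only the upper tube estimate. As written, however, it is a sketch. It does not control how the singular weights near $\partial[0,1]^{s_k}$ interact with the within-cube smooth contribution and with the interface cubes near the faces, which is exactly where the $(1-2A)(1-2A-\delta)$ loss arises. So either cite the correct lemma or carry that argument out in full.
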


\begin{proof}
For each $k=1,\ldots,K$, the ingredients needed in the proof of \cite[Lemma~4.5]{Liu2026BoundaryGrowth} are available: \emph{(i)} $F_{h,k}^\bd=H_{h,k}^{\bd,\mathrm{ext}}\1_{\Theta_{h,k}}$ almost everywhere, where $H_{h,k}^{\bd,\mathrm{ext}}$ belongs to $L^2$ and satisfies Owen's boundary growth condition with common exponent $A$ by Lemma~\ref{lem:boundary-rqmc-boundary-growth}; \emph{(ii)} $\partial\Theta_{h,k}$ has finite upper $(s_k-1)$-dimensional Minkowski content by Lemma~\ref{lem:boundary-rqmc-minkowski}; and \emph{(iii)} the projection of $Q_n$ onto its first $s_k$ coordinates remains a nested-uniformly scrambled Sobol' point set. Although the cited result assumes the existence of the Minkowski content, its proof only uses the corresponding upper tube estimate, which follows from \emph{(ii)}. Hence the same argument gives, for every $A\in(0,1/2)$ and $\delta\in(0,1-2A)$,
\begin{equation*}
\operatorname{Var}\big(\widehat I_{n,h,k}^\bd\big)\le C_{h,k,A,\delta}
n^{-1-\frac{(1-2A)(1-2A-\delta)}{s_k-2A-\delta}}(\log n)^{\frac{(s_k-1)(1-2A-\delta)}{s_k-2A-\delta}},
\end{equation*}
for every exit layer. The same argument applies in dimension $s$ to $H_{h,K+1}^{\bd,\mathrm{ext}}\1_{\mathcal A_{h,K}}$.

Since $s_k\le s$, the bound in dimension $s$ gives the slowest rate among all layers. Therefore, by the layer decomposition and the Cauchy--Schwarz inequality,
\begin{equation*}
\operatorname{Var}\big(\widehat u_{n,h}^{\RQMC,\bd}\big)\le C_{h,A,\delta}n^{-1-\frac{(1-2A)(1-2A-\delta)}{s-2A-\delta}}(\log n)^{\frac{(s-1)(1-2A-\delta)}{s-2A-\delta}}.
\end{equation*}
Since $A$ and $\delta$ can be chosen arbitrarily small and any fixed power of $\log n$ can be absorbed into $n^\varepsilon$, it follows that, for any $\varepsilon>0$,
\begin{equation*}
\operatorname{Var}\big(\widehat u_{n,h}^{\RQMC,\bd}\big)=O\big(n^{-1-1/s+\varepsilon}\big),
\end{equation*}
which proves \eqref{eq:boundary-direct-rqmc-variance}.
\end{proof}

Combining the temporal discretization error with the preceding RQMC sampling error estimates yields the following theorem.

\begin{theorem}
\label{thm:direct-rqmc-total-error}
Let $x\in\M$ be fixed and let $h=T/K$ with $s=(r-1)K$.

\emph{(i) \underline{Closed manifold.}} Suppose that the assumptions of Lemma~\ref{lem:weak-temporal-convergence} and Lemma~\ref{prop:closed-direct-rqmc-variance} hold. Let $\kappa_h>0$ be an exponent for which $\widetilde G_h^\cl\in G_p(M_h,B_h,\kappa_h)$, as established in Lemma~\ref{lem:closed-rqmc-polynomial-growth}. 
Then, for any $\varepsilon>0$, it holds that
\begin{equation*}
\mathbb E\Big[\big|\widehat u_{n,h}^{\RQMC,\cl}(x,T)-u^\cl(x,T)\big|^2\Big]
\le Ch^2+C_{\RQMC,\cl,\varepsilon}n^{-2+\varepsilon},
\end{equation*}
where $C$ is a positive constant independent of $h$, and
$C_{\RQMC,\cl,\varepsilon}$
is a positive constant independent of $n$.


\emph{(ii) \underline{Manifold with Dirichlet boundary.}} Suppose that Assumptions~\ref{ass:bounded-data}, \ref{ass:boundary-rqmc-transversality}, and \ref{ass:boundary-rqmc-weight-regularity} hold, suppose that the temporal weak error satisfies \eqref{eq:dirichlet-temporal-error}, and let $Q_n$ be the scrambled Sobol' point set in Lemma~\ref{lem:boundary-direct-rqmc-variance}. Then, for any $\varepsilon>0$, 
it holds that
\begin{equation*}
\mathbb E\Big[\big|\widehat u_{n,h}^{\RQMC,\bd}(x,T)-u^\bd(x,T)\big|^2\Big]\le C\big(h+\mathcal E_{\partial}\big)^2+C_{\RQMC,\bd,\varepsilon}n^{-1-1/s+\varepsilon},
\end{equation*}
where $C$ is a positive constant independent of $h$, and $C_{\RQMC,\bd,\varepsilon}$ is a positive constant independent of $n$.
In particular, if $\mathcal E_{\partial}\le Ch$, then
\begin{equation*}
\mathbb E\Big[\big|\widehat u_{n,h}^{\RQMC,\bd}(x,T)-u^\bd(x,T)\big|^2\Big]\le Ch^2+C_{\RQMC,\bd,\varepsilon}n^{-1-1/s+\varepsilon}.
\end{equation*}
\end{theorem}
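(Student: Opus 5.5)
The plan is to use the error decomposition \eqref{eq:total-error-decomposition} and exploit the fact that the sampling error is mean zero. Write
\[
\widehat u_{n,h}^{\RQMC,\star}(x,T)-u^\star(x,T)=\big(\widehat u_{n,h}^{\RQMC,\star}(x,T)-u_h^\star(x,T)\big)+\big(u_h^\star(x,T)-u^\star(x,T)\big),
\]
where the second term is deterministic for fixed $h$. Nested uniform scrambling makes each point marginally uniform, so the first term has expectation zero. For $\star=\cl$ this is Lemma~\ref{lem:closed-direct-rqmc-general}; for $\star=\bd$ it is the observation preceding \eqref{eq:boundary-direct-rqmc-general-rate}. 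Expanding the square, the cross term $2\,(u_h^\star-u^\star)\,\E[\widehat u_{n,h}^{\RQMC,\star}-u_h^\star]$ therefore vanishes. This gives the exact bias--variance identity
\[
\E\big[|\widehat u_{n,h}^{\RQMC,\star}-u^\star|^2\big]=\operatorname{Var}\big(\widehat u_{n,h}^{\RQMC,\star}\big)+|u_h^\star(x,T)-u^\star(x,T)|^2 .
\]
Even without unbiasedness, $(a+b)^2\le 2a^2+2b^2$ would suffice, at the cost of a factor $2$ absorbed into the constants.

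For part (i), I will bound the bias by Lemma~\ref{lem:weak-temporal-convergence}. Evaluating its $L^\infty(\M)$ bound at the fixed point $x$ gives $|u_h^\cl(x,T)-u^\cl(x,T)|\le Ch$, with $C$ independent of $h$. The variance is bounded by Lemma~\ref{prop:closed-direct-rqmc-variance}, which gives $C_{\RQMC,\cl,\varepsilon}n^{-2+\varepsilon}$ for all sufficiently large $n$. For the finitely many remaining $n$, I will use the uniform bound $|G_h^\cl|\le M_0+TM_f$, so the variance is at most $(M_0+TM_f)^2$. Enlarging $C_{\RQMC,\cl,\varepsilon}$ then makes the inequality hold for every $n$. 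Summing the squared bias and the variance gives (i).

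For part (ii), I will use \eqref{eq:dirichlet-temporal-error} at $x$ to get $|u_h^\bd-u^\bd|^2\le (Ch+\mathcal E_\partial)^2\le C(h+\mathcal E_\partial)^2$. The variance is bounded by Lemma~\ref{lem:boundary-direct-rqmc-variance}. Adding the two yields the main estimate. The special case $\mathcal E_\partial\le Ch$ follows since then $(h+\mathcal E_\partial)^2\le (1+C)^2h^2$.

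No step is genuinely hard. The main point of care is the dependence of the constants: the bias constant must be independent of $h$, which is what Lemma~\ref{lem:weak-temporal-convergence} and \eqref{eq:dirichlet-temporal-error} provide. The sampling constants are only claimed to be independent of $n$; they may depend on $h$ through $s$ and $\kappa_h$, consistent with the standing convention of Subsection~\ref{subsec:rqmc-error-estimates}. I will also note that the "sufficiently large $n$" restriction in the variance lemmas is removed using the boundedness of the integrands from Assumption~\ref{ass:bounded-data}.
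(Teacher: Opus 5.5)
Your proposal is correct and follows the paper's own proof: unbiasedness of the scrambled-net estimator gives the exact bias--variance identity, and you then invoke Lemma~\ref{lem:weak-temporal-convergence} or \eqref{eq:dirichlet-temporal-error} for the bias and Lemmas~\ref{prop:closed-direct-rqmc-variance} and~\ref{lem:boundary-direct-rqmc-variance} for the variance. Your extra step, extending the ``sufficiently large $n$'' bound to all $n$ by using the boundedness of the integrand from Assumption~\ref{ass:bounded-data} and enlarging the constant, is a harmless refinement that the paper leaves implicit.
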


\begin{proof}
The result follows immediately from the unbiasedness $\mathbb E[\widehat u_{n,h}^{\RQMC,\star}]=u_h^\star$, the identity $\mathbb E[|\widehat u_{n,h}^{\RQMC,\star}-u^\star|^2]=\operatorname{Var}(\widehat u_{n,h}^{\RQMC,\star})+|u_h^\star-u^\star|^2$, Lemma~\ref{lem:weak-temporal-convergence}, \eqref{eq:dirichlet-temporal-error}, and Lemmas~\ref{prop:closed-direct-rqmc-variance} and \ref{lem:boundary-direct-rqmc-variance}.
\end{proof}

\section{Array-RQMC sampling and error estimates}
\label{sec:array-rqmc}

As an alternative to direct path-space RQMC, we consider Array-RQMC for the p-Ret discretization. Since the direct formulation has nominal dimension $s=(r-1)K$, its effectiveness may deteriorate as $K$ or $r$ increases. Array-RQMC instead applies independently randomized $r$-dimensional point sets sequentially over the time layers, with particle reordering before each update, thereby avoiding a single full path-space randomization and motivating a layerwise error analysis.

\subsection{Stepwise Array-RQMC construction and Hilbert rank matching}
\label{subsec:array-construction}
We now describe the stepwise Array-RQMC construction and the associated Hilbert rank matching procedure. The construction follows the Array-RQMC framework for Markov chains in \cite{LEcuyerLecotTuffin2008} and is closely related to sequential QMC \cite{GerberChopin2015}.
Let $H_m:[0,1]\to[0,1]^m$ be an $m$-dimensional Hilbert space-filling curve and let $H_m^{-1}:[0,1]^m\to[0,1]$ be a fixed measurable pseudo-inverse satisfying $H_m\big(H_m^{-1}(x)\big)=x$ for every $x\in[0,1]^m$. The curve is H\"older continuous with exponent $1/m$ \cite[Theorem~2.1]{HeOwen2016}; more precisely,
\begin{equation}
    \big\|H_m(a)-H_m(b)\big\|_2\le 2\sqrt{m+3}\,|a-b|^{1/m},\quad a,b\in[0,1].
    \label{eq:hilbert-holder}
\end{equation}

At every time layer $k$, generate
\begin{equation*}
    P_k=\Big\{p_k^{(1)},\ldots,p_k^{(n)}\Big\}\subset[0,1)^r,
    \quad
    p_k^{(j)}=\big(\eta_k^{(j)},z_k^{(j)}\big), \quad j=1,\ldots,n.
\end{equation*}
For the analysis below, let $n=2^\ell$, and let each $P_k$ be an independently scrambled digital $(\mathfrak t,\ell,r)$-net with fixed $\mathfrak t$. The fresh scrambling is independent across layers, and the coordinate scramblings within each $P_k$ are independent. We require the projection onto the first coordinate to form a $(0,\ell,1)$-net, so that each interval $[(i-1)/n,i/n)$, $i=1,\ldots,n$, contains exactly one value of $\eta_k^{(j)}$. This one-dimensional stratification provides the rank variable used to match the randomized points with the ordered particle states. The remaining $r-1$ coordinates determine the p-Ret directions through
\begin{equation*}
    Z_k^{(j)}=\mathcal D\big(z_k^{(j)}\big),
    \quad j=1,\ldots,n.
\end{equation*}

After applying the prescribed parameter identifications, represent each particle by a coordinate $\xi$ in the rectangular parameter domain $\mathcal R:=\prod_{j=1}^m[a_j,b_j]\subset\mathbb R^m$, where $a_j<b_j$ for $j=1,\ldots,m$. Define the normalization $\mathcal N:\mathcal R\to[0,1]^m$ by
\begin{equation*}
    \mathcal N(\xi)_j=\frac{\xi_j-a_j}{b_j-a_j},
    \quad j=1,\ldots,m.
\end{equation*}
For a particle with coordinate $\xi\in\mathcal R$, define its scalar Hilbert key by $q(\xi)=H_m^{-1}\big(\mathcal N(\xi)\big)$. Since $H_m\big(q(\xi)\big)=\mathcal N(\xi)$, the H\"older estimate \eqref{eq:hilbert-holder} shows that particles with nearby Hilbert keys have nearby normalized coordinates.

At layer $k$, let $\mathcal I_k$ be the active index set and let $n_k=|\mathcal I_k|$. Active records are assigned their Hilbert keys, while inactive records in the Dirichlet problem retain their frozen states and accumulated contributions and are assigned the terminal key $+\infty$. Sorting the records by these keys therefore places all active records before the inactive ones. After sorting $P_k$ by the first coordinates $\eta_k^{(j)}$ and matching by rank, the first $n_k$ randomized points are paired with the active records. Only these records are propagated, while the remaining inputs are unused. Hence both the record array and the randomized point set retain size $n$ at every layer.

Let $\pi_{n,k}$ denote the permutation induced by the Hilbert ordering at layer $k$, with inactive records placed after all active records and ties resolved by their original labels. Thus, $\pi_{n,k}(i)$ is the index of the particle with Hilbert rank $i$. The subsequent analysis is formulated in terms of $\pi_{n,k}$ and therefore does not depend on a particular realization of the Hilbert key, beyond the stability assumptions stated below. The resulting Hilbert rank-matching procedure is illustrated in Figure~\ref{fig:hilbert-rank-matching}.

\begin{figure}[t]
    \centering
    \resizebox{\textwidth}{!}{%
    \begin{tikzpicture}[x=1cm,y=1cm,>=Latex,
        titlebox/.style={fill=hilbertheader,rounded corners=1.2mm,minimum height=0.62cm,font=\small},
        activebox/.style={draw=hilbertactiveline,fill=hilbertactive,line width=0.35pt},
        inactivebox/.style={draw=hilbertinactiveline,fill=hilbertinactive,line width=0.35pt},
        gridline/.style={draw=hilbertactiveline,line width=0.3pt},
        inactivegrid/.style={draw=hilbertinactiveline,line width=0.3pt},
        flow/.style={-{Latex[length=2.8mm,width=2mm]},line width=0.75pt}]

        \node[titlebox,minimum width=4.05cm] at (2.15,8.05) {Particle records at layer $k$};
\node[titlebox,minimum width=4.00cm] at (6.52,8.05) {Sorting by scalar Hilbert keys};
\node[titlebox,minimum width=4.45cm] at (11.30,8.05) {Matching and propagation};
        \node[titlebox,minimum width=4.70cm] at (16.75,8.05) {Particle states at layer $k+1$};

        \draw[activebox] (1.45,2.95) rectangle (3.35,6.55);
        \foreach \y in {3.85,4.75,5.65} {\draw[gridline] (1.45,\y)--(3.35,\y);}
        \node at (2.40,6.10) {$\xi_k^{(1)}$};
        \node at (2.40,5.20) {$\xi_k^{(2)}$};
        \node at (2.40,4.30) {$\vdots$};
        \node at (2.40,3.40) {$\xi_k^{(n_k)}$};

        \draw[inactivebox] (1.45,0.05) rectangle (3.35,2.72);
        \foreach \y in {0.94,1.83} {\draw[inactivegrid] (1.45,\y)--(3.35,\y);}
        \node at (2.40,2.275) {$\xi_k^{(n_k+1)}$};
        \node at (2.40,1.385) {$\vdots$};
        \node at (2.40,0.495) {$\xi_k^{(n)}$};

        \draw[decorate,decoration={brace,amplitude=4pt,mirror}] (1.34,6.45)--(1.34,3.05);
        \node[align=center,anchor=east] at (1.02,4.75) {Active\\records\\$(n_k)$};
        \draw[decorate,decoration={brace,amplitude=4pt,mirror}] (1.34,2.62)--(1.34,0.15);
        \node[align=center,anchor=east] at (1.02,1.39) {Inactive\\records\\$(n-n_k)$};

\draw[flow] (3.55,3.40)--(4.50,3.40);
\node[align=center,font=\scriptsize,inner sep=0pt] at (4.025,4.02) {Assign\\Hilbert keys};

\draw[activebox] (4.70,2.95) rectangle (7.60,7.30);
\draw[gridline] (4.70,6.55)--(7.60,6.55);
\foreach \y in {3.85,4.75,5.65} {\draw[gridline] (4.70,\y)--(7.60,\y);}
\draw[gridline] (6.17,2.95)--(6.17,7.30);
\node at (5.435,6.925) {Record};
\node at (6.885,6.925) {Key};
\node at (5.435,6.10) {$\xi_k^{(1)}$};
\node at (6.885,6.10) {$q_k^{(1)}$};
\node at (5.435,5.20) {$\xi_k^{(2)}$};
\node at (6.885,5.20) {$q_k^{(2)}$};
\node at (5.435,4.30) {$\vdots$};
\node at (6.885,4.30) {$\vdots$};
\node at (5.435,3.40) {$\xi_k^{(n_k)}$};
\node at (6.885,3.40) {$q_k^{(n_k)}$};

\draw[inactivebox] (4.70,0.05) rectangle (7.60,2.72);
\foreach \y in {0.94,1.83} {\draw[inactivegrid] (4.70,\y)--(7.60,\y);}
\draw[inactivegrid] (6.17,0.05)--(6.17,2.72);
\node at (5.435,2.275) {$\xi_k^{(n_k+1)}$};
\node at (6.885,2.275) {$+\infty$};
\node at (5.435,1.385) {$\vdots$};
\node at (6.885,1.385) {$\vdots$};
\node at (5.435,0.495) {$\xi_k^{(n)}$};
\node at (6.885,0.495) {$+\infty$};

\draw[flow] (7.72,4.30)--(8.88,4.30);
\node[align=center,font=\tiny,inner sep=0pt] at (8.30,4.78) {Sort by\\ key};
\node[align=center,font=\tiny,inner sep=0pt] at (8.30,3.72) {Permutation\\$\pi_{n,k}$};

\draw[activebox] (9.00,2.95) rectangle (13.60,7.30);
\draw[gridline] (9.00,6.55)--(13.60,6.55);
\foreach \y in {3.85,4.75,5.65} {\draw[gridline] (9.00,\y)--(13.60,\y);}
\node at (11.30,6.925) {Active records (propagate)};
\node[font=\footnotesize] at (11.30,6.10) {\scalebox{1.06}{$\xi_k^{(\pi_{n,k}(1))}\longleftrightarrow(\eta_k^{(1)},Z_k^{(1)})$}};
\node[font=\footnotesize] at (11.30,5.20) {\scalebox{1.06}{$\xi_k^{(\pi_{n,k}(2))}\longleftrightarrow(\eta_k^{(2)},Z_k^{(2)})$}};
\node at (10.80,4.30) {$\vdots$};
\node at (12.40,4.30) {$\vdots$};
\node[font=\footnotesize] at (11.30,3.40) {\scalebox{1.06}{$\xi_k^{(\pi_{n,k}(n_k))}\longleftrightarrow(\eta_k^{(n_k)},Z_k^{(n_k)})$}};

\draw[inactivebox] (9.00,0.05) rectangle (13.60,2.72);
\draw[inactivegrid] (9.00,1.92)--(13.60,1.92);
\foreach \y in {0.67,1.30} {\draw[inactivegrid] (9.00,\y)--(13.60,\y);}
\draw[inactivegrid] (11.07,0.05)--(11.07,1.92);
\draw[inactivegrid] (11.80,0.05)--(11.80,1.92);
\node[align=center,inner sep=0pt] at (11.30,2.32) {Inactive records\\(no propagation)};
\node[font=\footnotesize] at (10.035,1.61) {$\xi_k^{(\pi_{n,k}(n_k+1))}$};
\node at (11.435,1.61) {$\times$};
\node at (12.70,1.61) {(unused)};
\node at (10.035,0.985) {$\vdots$};
\node at (11.435,0.985) {$\times$};
\node at (12.70,0.985) {$\vdots$};
\node[font=\footnotesize] at (10.035,0.36) {$\xi_k^{(\pi_{n,k}(n))}$};
\node at (11.435,0.36) {$\times$};
\node at (12.70,0.36) {(unused)};

\draw[flow] (13.70,3.40)--(14.25,3.40);
\node[align=center,font=\tiny,inner sep=0pt] at (13.975,4.03) {p-Ret\\update};

        \draw[activebox] (14.35,2.95) rectangle (19.05,7.30);
        \draw[gridline] (14.35,6.55)--(19.05,6.55);
        \foreach \y in {3.85,4.75,5.65} {\draw[gridline] (14.35,\y)--(19.05,\y);}
        \node at (16.70,6.925) {Active records (p-Ret update)};
        \node[font=\scriptsize] at (16.70,6.10) {\scalebox{1.04}{$\xi_{k+1}^{(\pi_{n,k}(1))}=\Psi_h\!\big(\xi_k^{(\pi_{n,k}(1))},Z_k^{(1)}\big)$}};
        \node[font=\scriptsize] at (16.70,5.20) {\scalebox{1.04}{$\xi_{k+1}^{(\pi_{n,k}(2))}=\Psi_h\!\big(\xi_k^{(\pi_{n,k}(2))},Z_k^{(2)}\big)$}};
        \node at (16.70,4.30) {$\vdots$};
        \node[font=\scriptsize] at (16.70,3.40) {\scalebox{1.04}{$\xi_{k+1}^{(\pi_{n,k}(n_k))}=\Psi_h\!\big(\xi_k^{(\pi_{n,k}(n_k))},Z_k^{(n_k)}\big)$}};

        \draw[inactivebox] (14.35,0.05) rectangle (19.05,2.72);
        \draw[inactivegrid] (14.35,1.92)--(19.05,1.92);
        \foreach \y in {0.67,1.30} {\draw[inactivegrid] (14.35,\y)--(19.05,\y);}
        \node[align=center,inner sep=0pt] at (16.70,2.32) {Inactive records\\(remain frozen)};
        \node[font=\scriptsize] at (16.70,1.61) {$\xi_{k+1}^{(\pi_{n,k}(n_k+1))}=\xi_k^{(\pi_{n,k}(n_k+1))}$};
        \node at (16.70,0.985) {$\vdots$};
        \node[font=\scriptsize] at (16.70,0.36) {$\xi_{k+1}^{(\pi_{n,k}(n))}=\xi_k^{(\pi_{n,k}(n))}$};

\foreach \xa/\xb in {1.45/3.35,4.70/7.60,9.00/13.60,14.35/19.05}
            {\draw[hilbertdash,dashed,line width=0.65pt] (\xa,2.835)--(\xb,2.835);}
    \end{tikzpicture}%
    }
    \caption{Rank matching through scalar Hilbert keys and the resulting particle-state evolution from layer $k$ to layer $k+1$.}
    \label{fig:hilbert-rank-matching}
\end{figure}

For $\star\in\{\cl,\bd\}$, the Array-RQMC estimator is defined by
\begin{equation*}
\widehat u_{n,h}^{\ARQMC,\star}(x,T)=\frac{1}{n}\sum_{i=1}^nS_h^{\ARQMC,\star,(i)},
\end{equation*}
where $S_h^{\ARQMC,\star,(i)}$ is the same discrete Feynman--Kac functional as in Subsection~\ref{subsec:discrete-path-functionals}, evaluated along the Array-RQMC particle trajectory.

\subsection{Error estimates}
We first consider the closed manifold case with fixed $h=T/K$, where all $n$ particles remain active. Let $\xi_k^{(i)}$ be the identified parameter coordinate of particle $i$, obtained by repeatedly applying the common one-step p-Ret update $\Psi_h$ defined in \eqref{eq:one-step-pret-transition}, with the fresh Array-RQMC input assigned after rank matching, and set $X_k^{h,(i)}=\Phi(\xi_k^{(i)})$. Let $\mu_{h,k}=\mathcal L(X_k^h)$ be the probability law of $X_k^h$. We also let $\xi_k$ denote the chosen representative coordinate of $X_k^h$ and write $\nu_{h,k}=\mathcal L(\xi_k)$. Define
\begin{equation*}
    \widehat\mu_{n,h,k}(\varphi)=\frac1n\sum_{i=1}^n\varphi\big(X_k^{h,(i)}\big),
    \quad \varphi\in C(\M).
\end{equation*}
The Hilbert sort only relabels the particles and therefore leaves this empirical integral unchanged.

For $\varphi\in C(\M)$, the one-step transition operator can be written using the same coordinate update as in the direct RQMC analysis:
\begin{equation*}
    (\mathcal P_h\varphi)\big(\Phi(\xi)\big)=\int_{[0,1)^{r-1}}\varphi\big(\Phi\big(\Psi_h(\xi,\mathcal D(z))\big)\big)\,\dd z.
\end{equation*}
Set $I_{n,i}=[(i-1)/n,i/n)$, $i=1,\ldots,n$. By the rank-matching construction of Subsection~\ref{subsec:array-construction}, the particle $\xi_k^{(\pi_{n,k}(i))}$ is paired with the unique point of $P_k$ whose first coordinate lies in $I_{n,i}$. Accordingly, for $k<K$ and $\varphi\in C(\M)$, define the layerwise rank-coupled integrand
\begin{equation}
    G_{n,h,k}^{\varphi}(\eta,z)=\sum_{i=1}^n\mathbf 1_{I_{n,i}}(\eta)
    \varphi\big(\Phi\big(\Psi_h(\xi_k^{(\pi_{n,k}(i))},\mathcal D(z))\big)\big),\quad(\eta,z)\in[0,1)^r,
    \label{eq:array-layer-integrand}
\end{equation}

For $n=\mathfrak b^\ell$, let $P_{n,k}=\{(\eta_k^{(q)},z_k^{(q)}):q=1,\ldots,n\}\subset[0,1)^r$ be the independently scrambled digital net specified in Subsection~\ref{subsec:array-construction}. The associated randomized quadrature operator is defined, for $v\in L^2([0,1)^r)$, by
\begin{equation*}
    Q_{n,k}v=\frac1n\sum_{q=1}^nv(\eta_k^{(q)},z_k^{(q)}).
\end{equation*}
The standard scrambled-net bounds apply directly to each fixed $L^2$ integrand. Since the rank-coupled functions $G_{n,h,k}^{\varphi}$ are instead random and depend on $n$, the layerwise argument requires the following regularity and stability conditions. Let $\mathsf U$ denote the parameter domain with the prescribed identifications, equipped with a metric $d_{\mathsf U}$ compatible with its topology.
The $L^2$ modulus of the one-step update in the parameter variable is
\begin{equation*}
    \omega_{h,\varphi}(\delta)=\sup_{d_{\mathsf U}(\xi,\xi')\le \delta}
    \bigg(\int_{[0,1)^{r-1}}\Big|\varphi\big(\Phi\big(\Psi_h(\xi,\mathcal D(z))\big)\big)-\varphi\big(\Phi\big(\Psi_h(\xi',\mathcal D(z))\big)\big)\Big|^2\,\dd z\bigg)^{1/2}.
\end{equation*}
Since $\mathsf U$ is compact modulo the prescribed identifications and the map $(\xi,Z)\longmapsto \varphi\big(\Phi\big(\Psi_h(\xi,Z)\big)\big)$ is continuous on $\mathsf U\times S^{r-1}$, uniform continuity implies
\begin{equation*}
    \omega_{h,\varphi}(\delta)\longrightarrow0,
    \quad \delta\downarrow0.
\end{equation*}

\begin{assumption}
\label{ass:array-rqmc}
The transition operator satisfies $\mathcal P_h(C(\M))\subset C(\M)$, and $u_0,f(\cdot,T-t_k)\in C(\M)$ for $k=0,\ldots,K-1$.
\end{assumption}

For the order-stability analysis, take $d_{\mathsf U}$ to be the Euclidean distance between representatives in $\mathcal R$. For $k=1,\ldots,K-1$, define the empirical and limiting Hilbert keys and their cumulative distribution functions (CDFs) by
\begin{equation*}
    \begin{aligned}
    q_k^{(i)}&=H_m^{-1}\big(\mathcal N(\xi_k^{(i)})\big),\quad\widehat F_{n,h,k}(x)=\frac1n\sum_{i=1}^n\mathbf 1_{\{q_k^{(i)}\le x\}},\\
    q_k&=H_m^{-1}\big(\mathcal N(\xi_k)\big),\quad F_{h,k}(x)=\mathbb P\big(q_k\le x\big),\quad \xi_k\sim\nu_{h,k}.
    \end{aligned}
\end{equation*}
Let $\widehat F_{n,h,k}^{-1}$ and $F_{h,k}^{-1}$ denote the corresponding generalized inverse, or quantile, functions on $(0,1)$. At $k=0$, set $\zeta_{n,h,0}(\eta)=\zeta_{h,0}(\eta)=\xi_0$ for $\eta\in[0,1)$. For $k=1,\ldots,K-1$ and $\eta\in(0,1)$, define the empirical and deterministic Hilbert order maps by
\begin{equation*}
    \zeta_{n,h,k}(\eta)=\mathcal N^{-1}\circ H_m\circ\widehat F_{n,h,k}^{-1}(\eta),\quad\zeta_{h,k}(\eta)=\mathcal N^{-1}\circ H_m\circ F_{h,k}^{-1}(\eta).
\end{equation*}

\begin{lemma}
\label{lem:hilbert-order-stability}
Suppose that $F_{h,k}$ is continuous and strictly increasing on $[0,1]$ for every $k=1,\ldots,K-1$. Then, for every $k=0,\ldots,K-1$,
\begin{equation}
    \E\Big[\int_{[0,1)}d_{\mathsf U}\big(\zeta_{n,h,k}(\eta),\zeta_{h,k}(\eta)\big)^2\,\dd\eta\Big]\longrightarrow0,\quad n\to\infty.
    \label{eq:array-order-map-stability}
\end{equation}
\end{lemma}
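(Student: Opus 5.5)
Since $\mathcal N^{-1}$ is affine with Lipschitz constant $\max_j(b_j-a_j)$ and $H_m$ is H\"older with exponent $1/m$ by \eqref{eq:hilbert-holder}, the plan is to reduce \eqref{eq:array-order-map-stability} to convergence of the one-dimensional quantile functions of the Hilbert keys. The case $k=0$ is trivial, because $\zeta_{n,h,0}=\zeta_{h,0}=\xi_0$. For $k\ge1$ we have the pointwise bound
\begin{equation*}
d_{\mathsf U}\big(\zeta_{n,h,k}(\eta),\zeta_{h,k}(\eta)\big)\le C_m\big|\widehat F_{n,h,k}^{-1}(\eta)-F_{h,k}^{-1}(\eta)\big|^{1/m},
\end{equation*}
with $C_m=2\sqrt{m+3}\max_j(b_j-a_j)$. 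The keys lie in $[0,1]$, so the integrand is bounded by $C_m^2$. Jensen's inequality then gives $\int_0^1|\cdot|^{2/m}\dd\eta\le(\int_0^1|\cdot|\dd\eta)^{2/m}$ when $m\ge2$; for $m=1$ we use boundedness instead. Hence it suffices to show that $\E\int_0^1|\widehat F^{-1}_{n,h,k}-F^{-1}_{h,k}|\dd\eta\to0$. Bounded convergence reduces this further to showing that this $L^1$ distance tends to $0$ in probability.

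For distributions on $[0,1]$, the $L^1$ distance between quantile functions equals the $L^1$ distance between the CDFs, namely the Wasserstein-1 identity $\int_0^1|\widehat F^{-1}-F^{-1}|\dd\eta=\int_0^1|\widehat F(x)-F(x)|\dd x$. It is therefore enough to prove $\E\int_0^1|\widehat F_{n,h,k}(x)-F_{h,k}(x)|\dd x\to0$. A sufficient condition is that $\E|\widehat F_{n,h,k}(x)-F_{h,k}(x)|\to0$ for each fixed $x$, combined with dominated convergence in $x$. Continuity and strict monotonicity of $F_{h,k}$ could instead be used to upgrade pointwise convergence to uniform convergence via a P\'olya/Glivenko--Cantelli argument on a finite grid.

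The main obstacle is this pointwise CDF convergence. The particles are not i.i.d., since they are produced by the Array-RQMC recursion, so we need a consistency statement for the empirical measures $\widehat\mu_{n,h,k}$. I would prove by induction on $k$ that $\E|\widehat\mu_{n,h,k}(\varphi)-\mu_{h,k}(\varphi)|^2\to0$ for every $\varphi\in C(\M)$, together with the order-map stability at layer $k$.

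For the inductive step, write
\[
\widehat\mu_{n,h,k+1}(\varphi)-\mu_{h,k+1}(\varphi)=\big(Q_{n,k}G^\varphi_{n,h,k}-\textstyle\int G^\varphi_{n,h,k}\big)+\big(\widehat\mu_{n,h,k}(\mathcal P_h\varphi)-\mu_{h,k}(\mathcal P_h\varphi)\big).
\]
The second term is controlled by the induction hypothesis together with Assumption~\ref{ass:array-rqmc}. For the first term, the integrand is random. We compare it with the deterministic integrand $\varphi(\Phi(\Psi_h(\zeta_{h,k}(\eta),\mathcal D(z))))$, and the $L^2$ discrepancy is bounded by $\omega_{h,\varphi}$ evaluated along the order-map distance, since the empirical quantile function is constant on each $I_{n,i}$ and matches $\xi_k^{(\pi_{n,k}(i))}$. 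Conditioning on layer $k$, the fresh scrambling, and the $L^2$ variance bound for scrambled nets, $\operatorname{Var}\le\|v\|_2^2/n\cdot C$, handle the difference. The deterministic integrand's error is $o(1)$ by the Owen scrambled-net result for a fixed $L^2$ integrand.

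Finally, to pass from weak convergence to the key CDF at a point $x$, approximate $\mathbf 1_{\{H_m^{-1}(\mathcal N(\xi))\le x\}}$ from above and below by continuous functions. I expect this to be the delicate step, because $H_m^{-1}$ is only measurable. Continuity of $F_{h,k}$ ensures that the level set $\{q=x\}$ carries zero mass. I would therefore use a portmanteau-type argument applied to the pushforward of $\nu_{h,k}$ under the measurable key, approximating the set $\{q\le x\}$ by compact and open sets up to small $\nu_{h,k}$-measure via regularity of the measure.
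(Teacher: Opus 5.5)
Your reduction is sound, and it asks for less than the paper's proof does. The H\"older bound, Jensen's inequality and the identity $\int_0^1|\widehat F^{-1}-F^{-1}|\dd\eta=\int_0^1|\widehat F-F|\dd x$ reduce the claim to $\E|\widehat F_{n,h,k}(x)-F_{h,k}(x)|\to0$ for a.e.\ $x$. The paper instead needs almost sure pointwise quantile convergence, which it imports from the Gerber--Chopin argument rather than deriving it. Your induction for $\E|\widehat\mu_{n,h,k}(\varphi)-\mu_{h,k}(\varphi)|^2\to0$, $\varphi\in C(\M)$, is also fine. It does not even need order stability, since the conditional variance at each layer is at most $C_{\mathrm{scr}}\|\varphi\|_\infty^2/n$.

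The gap is the last step, from this weak consistency to the key CDF. Write $B_x=\{q\le x\}$, viewed in $\M$ through the chosen representatives. Weak consistency controls $\widehat\mu_{n,h,k}(B_x)$ only through continuous $g_1\le\mathbf 1_{B_x}\le g_2$ with $\nu_{h,k}(g_2-g_1)$ small. Necessarily $\{g_1>0\}\subset B_x^\circ$ and $\{g_2\ge1\}\supset\overline{B_x}$, so such a sandwich exists if and only if $\nu_{h,k}(\partial B_x)=0$. Continuity of $F_{h,k}$ gives only $\nu_{h,k}(\{q=x\})=0$. Since $\{q=x\}$ is at most the single point $\mathcal N^{-1}(H_m(x))$, this merely says that $\nu_{h,k}$ has no atoms. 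Because $q=H_m^{-1}\circ\mathcal N$ is discontinuous, $\partial B_x$ is much larger than $\{q=x\}$. For $m=2$ and $x=1/4$, $\mathcal N(B_x)$ lies between the open and the closed first Hilbert subsquare of side $1/2$, so $\partial B_x$ contains two segments of length $1/2$ in the interior of the square. In general $\partial B_x\subset\{q=x\}\cup\mathcal S$. Here $\mathcal S$ is the union of the dyadic hyperplanes in normalized coordinates, where $H_m$ fails to be injective, and the identification seams, where the representative jumps.

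The remedy you sketch does not close this. Inner and outer regularity give compact $C\subset B_x\subset O$, $O$ open, with $\nu_{h,k}(O\setminus C)$ small. But weak convergence bounds $\widehat\mu_{n,h,k}$ from above only on closed sets and from below only on open sets. You would therefore need a closed superset and an open subset of $B_x$ of nearly equal mass, which is again $\nu_{h,k}(\partial B_x)=0$. For the same reason, weak convergence does not pass through the pushforward by a merely measurable key.

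What is missing is a nondegeneracy hypothesis beyond the lemma's, for instance $\nu_{h,k}(\mathcal S)=0$, which holds automatically if $\nu_{h,k}$ is absolutely continuous. With it, $q$ is continuous $\nu_{h,k}$-a.e., so $\phi\circ q$ can be sandwiched for every $\phi\in C([0,1])$. The empirical key laws then converge weakly in $L^1$, and continuity of $F_{h,k}$ yields the pointwise CDF convergence your reduction requires.
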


\begin{proof}
The case $k=0$ is immediate from the definitions. Fix $k\in\{1,\ldots,K-1\}$. Up to the endpoints of the intervals $I_{n,i}$, $\widehat F_{n,h,k}^{-1}$ selects the $i$-th ordered key. Since $H_m\big(H_m^{-1}(x)\big)=x$, the empirical order map defined above agrees with the Hilbert rank ordering. The quantile convergence argument in \cite[Appendix~A.4.4, proof of Theorem~7]{GerberChopin2015} yields, for every $\eta\in(0,1)$,
\begin{equation*}\widehat F_{n,h,k}^{-1}(\eta)
    \longrightarrow F_{h,k}^{-1}(\eta)
    \quad\text{almost surely}.
\end{equation*}
Because $\mathcal N^{-1}(u)_j=a_j+(b_j-a_j)u_j$ for $j=1,\ldots,m$, the H\"older estimate \eqref{eq:hilbert-holder} gives
\begin{equation*}
    \begin{aligned}
    d_{\mathsf U}\big(\zeta_{n,h,k}(\eta),\zeta_{h,k}(\eta)\big)^2&=\Big\|\mathcal N^{-1}\circ H_m\circ\widehat F_{n,h,k}^{-1}(\eta)-\mathcal N^{-1}\circ H_m\circ F_{h,k}^{-1}(\eta)\Big\|_2^2\\
    &\le \max_{1\le j\le m}(b_j-a_j)^2\Big\|H_m\circ\widehat F_{n,h,k}^{-1}(\eta)-H_m\circ F_{h,k}^{-1}(\eta)\Big\|_2^2\\
    &\le 4(m+3)\max_{1\le j\le m}(b_j-a_j)^2\big|\widehat F_{n,h,k}^{-1}(\eta)-F_{h,k}^{-1}(\eta)\big|^{2/m}.
    \end{aligned}
\end{equation*}
The preceding estimate shows that $d_{\mathsf U}\big(\zeta_{n,h,k}(\eta),\zeta_{h,k}(\eta)\big)^2\to0$ almost surely for every $\eta\in(0,1)$. Dominated convergence in the randomization variable therefore gives
\begin{equation*}
    \E\Big[d_{\mathsf U}\big(\zeta_{n,h,k}(\eta),\zeta_{h,k}(\eta)\big)^2\Big]\longrightarrow0.
\end{equation*}
Hence Fubini's theorem and dominated convergence in $\eta$ yield
\begin{equation*}
    \begin{aligned}
    &\lim_{n\to\infty}\E\Big[\int_{[0,1)}d_{\mathsf U}\big(\zeta_{n,h,k}(\eta),\zeta_{h,k}(\eta)\big)^2\,\dd\eta\Big]\\
    &\quad=\int_{[0,1)}\lim_{n\to\infty}\E\Big[d_{\mathsf U}\big(\zeta_{n,h,k}(\eta),\zeta_{h,k}(\eta)\big)^2\Big]\,\dd\eta=0.
    \end{aligned}
\end{equation*}
This proves \eqref{eq:array-order-map-stability}.
\end{proof}

\begin{lemma}
\label{lem:array-integrand-stability}
Under the assumptions of Lemma~\ref{lem:hilbert-order-stability}, for $k<K$ and $\varphi\in C(\M)$, define $G_{h,k}^{\varphi}(\eta,z)=\varphi\big(\Phi\big(\Psi_h\big(\zeta_{h,k}(\eta),\mathcal D(z)\big)\big)\big)$. Then $G_{h,k}^{\varphi}\in L^2([0,1)^r)$ and
\begin{equation}
    \E\Big[\big\|G_{n,h,k}^{\varphi}-G_{h,k}^{\varphi}\big\|_2^2\Big]\longrightarrow0,
    \quad n\to\infty.
    \label{eq:array-ordered-integrand-stability}
\end{equation}
\end{lemma}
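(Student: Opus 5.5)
The plan has three steps: rewrite $G_{n,h,k}^{\varphi}$ through the empirical order map $\zeta_{n,h,k}$, use boundedness of $\varphi$ for square-integrability of the limit, and combine the $L^2$ modulus $\omega_{h,\varphi}$ with Lemma~\ref{lem:hilbert-order-stability} for convergence.

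\emph{Step 1: representation.} By construction of the rank matching, for $\eta\in I_{n,i}$ (up to endpoints, a null set) the $i$-th Hilbert-ordered particle is $\xi_k^{(\pi_{n,k}(i))}=\zeta_{n,h,k}(\eta)$. This follows from the identification in the proof of Lemma~\ref{lem:hilbert-order-stability}: on $I_{n,i}$ the quantile $\widehat F_{n,h,k}^{-1}$ selects the $i$-th ordered key, and $\mathcal N^{-1}\circ H_m$ maps that key back to the particle's coordinate. Hence, for almost every $(\eta,z)\in[0,1)^r$,
\begin{equation*}
G_{n,h,k}^{\varphi}(\eta,z)=\varphi\big(\Phi\big(\Psi_h(\zeta_{n,h,k}(\eta),\mathcal D(z))\big)\big).
\end{equation*}
For $k=0$ both $\zeta$ maps equal $\xi_0$, so $G_{n,h,0}^\varphi=G_{h,0}^\varphi$ and there is nothing to prove.

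\emph{Step 2: $L^2$ membership.} Since $\M$ is compact, $\varphi\in C(\M)$ is bounded, so $|G_{h,k}^\varphi|\le\|\varphi\|_\infty$. Measurability follows because $\zeta_{h,k}$ is a composition of a monotone quantile function, the continuous map $H_m$, and the affine map $\mathcal N^{-1}$, while $(\xi,z)\mapsto\varphi(\Phi(\Psi_h(\xi,\mathcal D(z))))$ is Borel. Therefore $G_{h,k}^\varphi\in L^2([0,1)^r)$.

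\emph{Step 3: convergence.} By Fubini, integrating first in $z$ for fixed $\eta$ and using the definition of $\omega_{h,\varphi}$,
\begin{equation*}
\big\|G_{n,h,k}^{\varphi}-G_{h,k}^{\varphi}\big\|_2^2\le\int_{[0,1)}\omega_{h,\varphi}\big(d_{\mathsf U}(\zeta_{n,h,k}(\eta),\zeta_{h,k}(\eta))\big)^2\dd\eta.
\end{equation*}
To turn this into a bound in terms of $d_{\mathsf U}^2$, fix $\epsilon>0$ and pick $\delta>0$ with $\omega_{h,\varphi}(\delta)^2\le\epsilon$. Use the trivial bound $\omega_{h,\varphi}\le2\|\varphi\|_\infty$ when $d_{\mathsf U}>\delta$, which occurs with indicator at most $d_{\mathsf U}^2/\delta^2$. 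This gives
\begin{equation*}
\omega_{h,\varphi}(d)^2\le\epsilon+4\|\varphi\|_\infty^2\,\delta^{-2}d^2 .
\end{equation*}
Taking expectations and applying Lemma~\ref{lem:hilbert-order-stability} yields $\limsup_n\E\|G_{n,h,k}^{\varphi}-G_{h,k}^{\varphi}\|_2^2\le\epsilon$. Since $\epsilon$ is arbitrary, \eqref{eq:array-ordered-integrand-stability} follows.

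The main obstacle is Step 1: making rigorous that the rank-matched particle on $I_{n,i}$ coincides with $\zeta_{n,h,k}(\eta)$. Two points need care. First, ties among keys must be resolved by original labels, consistently with $\pi_{n,k}$. Second, $\zeta_{n,h,k}(\eta)$ recovers the particle's representative in $\mathcal R$ exactly, because $H_m(H_m^{-1}(x))=x$. The uniform continuity behind $\omega_{h,\varphi}\to0$ was already noted before Assumption~\ref{ass:array-rqmc}, so it can be used directly.
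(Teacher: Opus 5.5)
Your proposal is correct and follows essentially the same route as the paper. Both split $[0,1)$ according to whether $d_{\mathsf U}(\zeta_{n,h,k}(\eta),\zeta_{h,k}(\eta))\le\delta$, use $\omega_{h,\varphi}$ on the first set, the bound $4\|\varphi\|_\infty^2$ together with the indicator bound $\mathbf 1_{\{d>\delta\}}\le d^2/\delta^2$ on the second, invoke Lemma~\ref{lem:hilbert-order-stability}, and then send $\delta\downarrow0$. Your Step~1 only makes explicit the a.e.\ identification $G_{n,h,k}^{\varphi}(\eta,z)=\varphi(\Phi(\Psi_h(\zeta_{n,h,k}(\eta),\mathcal D(z))))$, which the paper uses implicitly; your tie-breaking concern is moot, because equal Hilbert keys force equal representatives via $H_m(H_m^{-1}(x))=x$.
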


\begin{proof}
Set $M_{\varphi}=\|\varphi\|_{\infty}$ and $D_{n,k}(\eta)=d_{\mathsf U}\big(\zeta_{n,h,k}(\eta),\zeta_{h,k}(\eta)\big)$. Since $\M$ is compact and $\varphi\in C(\M)$, both $G_{n,h,k}^{\varphi}$ and $G_{h,k}^{\varphi}$ are bounded in absolute value by $M_{\varphi}$.

Fix $\delta>0$ and decompose $[0,1)$ into the sets
$\{D_{n,k}\le\delta\}$ and $\{D_{n,k}>\delta\}$. On the first set, the
definition of $\omega_{h,\varphi}$ gives
\begin{equation*}
    \int_{[0,1)^{r-1}}\big|G_{n,h,k}^{\varphi}(\eta,z)-G_{h,k}^{\varphi}(\eta,z)\big|^2\,\dd z\le\omega_{h,\varphi}(\delta)^2.
\end{equation*}
On the second set, the uniform bound gives
\begin{equation*}
    \int_{[0,1)^{r-1}}\big|G_{n,h,k}^{\varphi}(\eta,z)-G_{h,k}^{\varphi}(\eta,z)\big|^2\,\dd z\le 4M_{\varphi}^2.
\end{equation*}
Using $\mathbf 1_{\{D_{n,k}>\delta\}}\le\frac{D_{n,k}^2}{\delta^2}$, integrating with respect to $\eta$, and then taking expectation yield
\begin{equation*}
    \E\Big[\big\|G_{n,h,k}^{\varphi}-G_{h,k}^{\varphi}\big\|_2^2\Big]\le\omega_{h,\varphi}(\delta)^2+\frac{4M_{\varphi}^2}{\delta^2}\E\Big[\int_{[0,1)}D_{n,k}(\eta)^2\,\dd\eta\Big].
\end{equation*}
For fixed $\delta$, Lemma~\ref{lem:hilbert-order-stability} makes the second term tend to zero as $n\to\infty$. Hence
\begin{equation*}
    \limsup_{n\to\infty}\E\Big[\big\|G_{n,h,k}^{\varphi}-G_{h,k}^{\varphi}\big\|_2^2\Big]\le\omega_{h,\varphi}(\delta)^2.
\end{equation*}
Letting first $n\to\infty$ and then $\delta\downarrow0$ proves \eqref{eq:array-ordered-integrand-stability}. Finally, $\|G_{h,k}^{\varphi}\|_2\le M_{\varphi}$, and therefore $G_{h,k}^{\varphi}\in L^2([0,1)^r)$.
\end{proof}


We now combine Lemma~\ref{lem:array-integrand-stability}
with the scrambled-net variance bound \cite{Owen1997MonteCarloVariance}.
For a fixed $v\in L^2([0,1)^r)$, nested scrambling gives
\begin{equation*}
    \operatorname{Var}(Q_{n,k}v)\le\frac{C_{\mathrm{scr}}}{n}\Big\|v-\int_{[0,1)^r}v\Big\|_2^2,
    \quad\quad
    n\operatorname{Var}(Q_{n,k}v)\longrightarrow0.
\end{equation*}

\begin{lemma}
\label{lem:closed-array-rqmc-error}
Under Assumption~\ref{ass:array-rqmc}, the assumptions of Lemma~\ref{lem:hilbert-order-stability}, and the layer-point construction above, for every $\varphi\in C(\M)$ and every fixed $k=0,\ldots,K$, we have
\begin{equation}
    \E\Big[\big|\widehat\mu_{n,h,k}(\varphi)-\mu_{h,k}(\varphi)\big|^2\Big]
    =\operatorname{Var}\big(\widehat\mu_{n,h,k}(\varphi)\big)=o(n^{-1}).
    \label{eq:array-layer-sqmc-rate}
\end{equation}
Consequently, for every fixed $x\in\M$ and fixed $h=T/K$,
\begin{equation}
    \operatorname{Var}\big(\widehat u_{n,h}^{\ARQMC,\cl}(x,T)\big)=o(n^{-1}).
    \label{eq:array-rqmc-variance}
\end{equation}
\end{lemma}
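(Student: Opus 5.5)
The plan is an induction over the layers $k$, in the style of the Array-RQMC/SQMC consistency arguments of \cite{LEcuyerLecotTuffin2008,GerberChopin2015}. The bias claim comes first and is exact: since the first coordinate of each scrambled $P_k$ is a $(0,\ell,1)$-net, every point is marginally uniform, and rank matching pairs particle $\pi_{n,k}(i)$ with the unique point whose $\eta$ lies in $I_{n,i}$. Conditionally on $\mathcal F_k$, the $\sigma$-field generated by the first $k$ layers, we therefore have $\E[\widehat\mu_{n,h,k+1}(\varphi)\mid\mathcal F_k]=\int G_{n,h,k}^{\varphi}=\widehat\mu_{n,h,k}(\mathcal P_h\varphi)$. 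Here $\widehat\mu_{n,h,k+1}(\varphi)=Q_{n,k}G_{n,h,k}^{\varphi}$, because the sum over $q$ picks exactly one point per interval. Iterating the tower property together with $\mathcal P_h(C(\M))\subset C(\M)$ from Assumption~\ref{ass:array-rqmc} gives $\E[\widehat\mu_{n,h,k}(\varphi)]=\mu_{h,k}(\varphi)$, which is the first equality in \eqref{eq:array-layer-sqmc-rate}.

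For the variance, I will use the conditional decomposition
\begin{equation*}
\operatorname{Var}\big(\widehat\mu_{n,h,k+1}(\varphi)\big)=\E\big[\operatorname{Var}(Q_{n,k}G_{n,h,k}^{\varphi}\mid\mathcal F_k)\big]+\operatorname{Var}\big(\widehat\mu_{n,h,k}(\mathcal P_h\varphi)\big).
\end{equation*}
The second term is $o(n^{-1})$ by the induction hypothesis applied to the continuous function $\mathcal P_h\varphi$, so the statement must be proved for all $\varphi\in C(\M)$ simultaneously at each $k$. The base case $k=0$ is trivial since all particles sit at $\xi_0$.

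The first term is the main obstacle, because the integrand $G_{n,h,k}^{\varphi}$ is random and depends on $n$. The scrambling at layer $k$ is independent of $\mathcal F_k$, so the fixed-integrand bound applies conditionally. I split $G_{n,h,k}^{\varphi}=G_{h,k}^{\varphi}+R_{n,k}$ and use $\operatorname{Var}(a+b)\le2\operatorname{Var}(a)+2\operatorname{Var}(b)$. For the fixed function $G_{h,k}^{\varphi}\in L^2$ from Lemma~\ref{lem:array-integrand-stability}, $n\operatorname{Var}(Q_{n,k}G_{h,k}^{\varphi})\to0$. For the remainder,
\begin{equation*}
n\operatorname{Var}(Q_{n,k}R_{n,k}\mid\mathcal F_k)\le C_{\mathrm{scr}}\|R_{n,k}\|_2^2 .
\end{equation*}
Taking expectations and invoking \eqref{eq:array-ordered-integrand-stability} sends this to $0$. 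Hence $n\,\E[\operatorname{Var}(Q_{n,k}G^{\varphi}_{n,h,k}\mid\mathcal F_k)]\to0$, which closes the induction.

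For the estimator, write $\widehat u_{n,h}^{\ARQMC,\cl}=\widehat\mu_{n,h,K}(u_0)+h\sum_{k=0}^{K-1}\widehat\mu_{n,h,k}(f(\cdot,T-t_k))$. Each term has variance $o(n^{-1})$ by \eqref{eq:array-layer-sqmc-rate}, with continuity of the data from Assumption~\ref{ass:array-rqmc}. Because $K$ is fixed, the Cauchy--Schwarz inequality $\operatorname{Var}(\sum_{j=0}^{K}a_j)\le(K+1)\sum_{j=0}^{K}\operatorname{Var}(a_j)$ yields \eqref{eq:array-rqmc-variance}.
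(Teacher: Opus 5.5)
Your proposal is correct and follows essentially the same route as the paper: the rank-matching identity $\widehat\mu_{n,h,k+1}(\varphi)=Q_{n,k}G_{n,h,k}^{\varphi}$ gives conditional unbiasedness, the law of total variance drives an induction over $k$ run simultaneously for all $\varphi\in C(\M)$, the innovation term is controlled by splitting $G_{n,h,k}^{\varphi}$ around the deterministic limit $G_{h,k}^{\varphi}$ via the scrambled-net gain bound and Lemma~\ref{lem:array-integrand-stability}, and the estimator bound follows from the $(K+1)$-term Cauchy--Schwarz inequality. Your explicit remark that the induction must cover $\mathcal P_h\varphi$, and hence all of $C(\M)$ at each layer, makes precise what the paper calls ``the same induction.''
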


\begin{proof}

Fix $\varphi\in C(\M)$ and $k<K$, and let $\mathcal F_{n,k}$ be the sigma-field generated by the particle array through time $t_k$, before the fresh scrambling of $P_{n,k}$. Then $G_{n,h,k}^{\varphi}$ is $\mathcal F_{n,k}$-measurable. Since it evaluates $\varphi$ after one p-Ret update from layer $k$, the first-coordinate stratification and rank matching give
\begin{equation}
    \widehat\mu_{n,h,k+1}(\varphi)
    =Q_{n,k}G_{n,h,k}^{\varphi}.
    \label{eq:array-rank-quadrature-identity}
\end{equation}
Moreover, integrating first with respect to $z$ and then with respect to $\eta$ yields
\begin{equation*}
    \int_{[0,1)^r}G_{n,h,k}^{\varphi}(\eta,z)\,\dd(\eta,z)=\widehat\mu_{n,h,k}(\mathcal P_h\varphi).
\end{equation*}
The fresh randomization and the fixed-integrand unbiasedness established above therefore imply
\begin{equation}
    \E\big[\widehat\mu_{n,h,k+1}(\varphi)\mid\mathcal F_{n,k}\big]=\widehat\mu_{n,h,k}(\mathcal P_h\varphi).
    \label{eq:array-conditional-mean}
\end{equation}

For a deterministic $v$, set $V_n(v)=\operatorname{Var}(Q_{n,k}v)$. Applying the variance inequality and the gain bound to $G_{n,h,k}^{\varphi}-G_{h,k}^{\varphi}$, and then using Lemma~\ref{lem:array-integrand-stability}, gives
\begin{equation}
\begin{split}
    &\E\Big[\operatorname{Var}\big(\widehat\mu_{n,h,k+1}(\varphi)\mid\mathcal F_{n,k}\big)\Big]
    =\E\big[V_n\big(G_{n,h,k}^{\varphi}\big)\big]\\
    &\quad\le2V_n\big(G_{h,k}^{\varphi}\big)
    +\frac{2C_{\mathrm{scr}}}{n}\E\Big[\big\|G_{n,h,k}^{\varphi}-G_{h,k}^{\varphi}\big\|_2^2\Big]
    =o(n^{-1}).
\end{split}
\label{eq:array-innovation-variance}
\end{equation}

Since $\widehat\mu_{n,h,0}=\mu_{h,0}=\delta_x$ and Assumption~\ref{ass:array-rqmc} gives $\mathcal P_h(C(\M))\subset C(\M)$, \eqref{eq:array-conditional-mean} and $\mu_{h,k+1}(\varphi)=\mu_{h,k}(\mathcal P_h\varphi)$ yield $\E\big[\widehat\mu_{n,h,k+1}(\varphi)\big]=\mu_{h,k+1}(\varphi)$. The law of total variance, \eqref{eq:array-innovation-variance}, and the same induction give
\begin{equation}
    \operatorname{Var}\big(\widehat\mu_{n,h,k+1}(\varphi)\big)
    =\E\Big[\operatorname{Var}\big(\widehat\mu_{n,h,k+1}(\varphi)\mid\mathcal F_{n,k}\big)\Big]
    +\operatorname{Var}\big(\widehat\mu_{n,h,k}(\mathcal P_h\varphi)\big)
    =o(n^{-1}).
    \label{eq:array-total-variance-recursion}
\end{equation}
Together with the layerwise unbiasedness, the variance estimate implies
\begin{equation}
    \E\Big[\big|\widehat\mu_{n,h,k}(\varphi)-\mu_{h,k}(\varphi)\big|^2\Big]=\operatorname{Var}\big(\widehat\mu_{n,h,k}(\varphi)\big)=o(n^{-1}),
\end{equation}
for every fixed $k=0,\ldots,K$. This proves \eqref{eq:array-layer-sqmc-rate}.

Finally, particle sorting only permutes the array, so the discrete Feynman--Kac estimator has the layerwise representation
\begin{equation*}
\begin{aligned}
    \widehat u_{n,h}^{\ARQMC,\cl}(x,T)
    &=\widehat\mu_{n,h,K}(u_0)+h\sum_{k=0}^{K-1}\widehat\mu_{n,h,k}\big(f(\cdot,T-t_k)\big),\\
    u_h^\cl(x,T)
    &=\mu_{h,K}(u_0)+h\sum_{k=0}^{K-1}\mu_{h,k}\big(f(\cdot,T-t_k)\big).
\end{aligned}
\end{equation*}
Thus the Array-RQMC estimator is exactly unbiased for the time-discrete Feynman--Kac quantity $u_h^\cl$. Moreover, since $K$ is fixed, the layerwise variance estimate yields
\begin{equation*}
\begin{aligned}
    &\operatorname{Var}\big(\widehat u_{n,h}^{\ARQMC,\cl}(x,T)\big)
    \le(K+1)\Big[\operatorname{Var}\big(\widehat\mu_{n,h,K}(u_0)\big)\\
    &\quad+h^2\sum_{k=0}^{K-1}\operatorname{Var}\big(\widehat\mu_{n,h,k}(f(\cdot,T-t_k))\big)\Big]
    =o(n^{-1}).
\end{aligned}
\end{equation*}
This proves \eqref{eq:array-rqmc-variance}.
\end{proof}

We finally consider the Dirichlet problem. Since first-exit stopping introduces discontinuities into the layerwise integrands, the closed-case continuity argument does not apply directly. We therefore assume $L^2$ stability of the stopped rank-coupled integrands, incorporating both Hilbert-order stability and the effect of the exit interfaces.

\begin{assumption}
\label{ass:boundary-array-rqmc}
For every fixed $k<K$, let $G_{n,h,k}^{\bd}$ denote the rank-coupled layer integrand associated with the stopped Array-RQMC update at layer $k$. Suppose that there exists a deterministic function $G_{h,k}^{\bd}\in L^2([0,1)^r)$ such that
\begin{equation}
    \E\Big[\big\|G_{n,h,k}^{\bd}-G_{h,k}^{\bd}\big\|_2^2\Big]\longrightarrow0,
    \quad
    n\to\infty.
    \label{eq:boundary-array-integrand-stability}
\end{equation}
\end{assumption}

Assumption~\ref{ass:boundary-array-rqmc} is the stopped counterpart of Lemma~\ref{lem:array-integrand-stability}. In the Dirichlet case, the additional difficulty is caused by the discontinuity of the exit decision. A sufficient verification may be obtained by combining the ordering stability with the transversality and finite-complexity control of the numerical exit interfaces used in the direct RQMC analysis.

\begin{lemma}
\label{lem:boundary-array-rqmc-error}

Suppose that Assumption~\ref{ass:bounded-data} and Assumption~\ref{ass:boundary-array-rqmc} hold. Then, for every fixed $x\in\M$ and fixed $h=T/K$,
\begin{equation*}
    \E\big[\widehat u_{n,h}^{\ARQMC,\bd}(x,T)\big]=u_h^\bd(x,T),
    \quad
    \operatorname{Var}\big(\widehat u_{n,h}^{\ARQMC,\bd}(x,T)\big)=o(n^{-1}).
\end{equation*}
\end{lemma}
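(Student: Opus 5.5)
The plan is to reproduce the layerwise martingale argument of Lemma~\ref{lem:closed-array-rqmc-error}, now on an augmented record state. The main change is that the boundary exit decision is absorbed into the deterministic limit integrand furnished by Assumption~\ref{ass:boundary-array-rqmc}.

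\textbf{Augmented records.} For each record we keep the triple consisting of the coordinate $\xi$, an active/inactive flag, and the accumulated contribution: either $I_k$ while active, or the frozen value of $S_h^\bd$ once exited. With this convention the stopped dynamics is again a Markov chain on the records. Its one-step kernel $\mathcal P_h^\bd$ does three things:
\begin{itemize}
\item it applies $\Psi_h$ to active records;
\item it computes $\lambda_k$ from \eqref{eq:numerical-boundary-fraction} and freezes the record when the p-Ret candidate leaves $U$;
\item it leaves inactive records unchanged.
\end{itemize}
Write $\widehat\mu_{n,h,k}^\bd$ for the empirical measure of the $n$ records at layer $k$ and $\mu_{h,k}^\bd$ for their law. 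The estimator then equals $\widehat\mu_{n,h,K}^\bd(\Xi)$ for the bounded terminal payoff $\Xi$, defined as $u_0(X_K)+I_K$ for a surviving record and as the frozen value for an exited one. By Assumption~\ref{ass:bounded-data}, $|\Xi|\le\max\{M_0,M_\psi\}+TM_f$.

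\textbf{Layer identities.} Inactive records sit after the active ones and consume unused inputs. Rank matching therefore still gives an exact quadrature identity
\[
\widehat\mu_{n,h,k+1}^\bd(\varphi)=Q_{n,k}G_{n,h,k}^{\bd},
\]
where the rank-coupled integrand equals the stopped update of the active record in stratum $I_{n,i}$, and equals the $z$-independent frozen value on the inactive strata. This integrand is $\mathcal F_{n,k}$-measurable. Integrating over $z$ and then over $\eta$ gives $\widehat\mu_{n,h,k}^\bd(\mathcal P_h^\bd\varphi)$.

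Because $P_{n,k}$ is freshly scrambled and each scrambled point is marginally uniform, we get
\[
\E\big[\widehat\mu_{n,h,k+1}^\bd(\varphi)\mid\mathcal F_{n,k}\big]=\widehat\mu_{n,h,k}^\bd(\mathcal P_h^\bd\varphi).
\]
Iterating the tower property from $\widehat\mu_{n,h,0}^\bd=\delta_{(\xi_0,\text{active},0)}$ yields unbiasedness, $\E[\widehat u^{\ARQMC,\bd}_{n,h}]=u_h^\bd(x,T)$. This step needs only boundedness and measurability, not continuity.

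\textbf{Variance recursion.} By the law of total variance,
\[
\operatorname{Var}\big(\widehat\mu_{n,h,k+1}^\bd(\varphi)\big)=\E\big[V_n(G_{n,h,k}^\bd)\big]+\operatorname{Var}\big(\widehat\mu_{n,h,k}^\bd(\mathcal P_h^\bd\varphi)\big).
\]
For the first term, split $G_{n,h,k}^\bd=G_{h,k}^\bd+(G_{n,h,k}^\bd-G_{h,k}^\bd)$. The fixed $L^2$ function $G_{h,k}^\bd$ satisfies $V_n(G_{h,k}^\bd)=o(n^{-1})$ by the scrambled-net result \cite{Owen1997MonteCarloVariance}. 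The remainder contributes at most
\[
\frac{2C_{\mathrm{scr}}}{n}\,\E\big\|G_{n,h,k}^\bd-G_{h,k}^\bd\big\|_2^2=o(n^{-1})
\]
by \eqref{eq:boundary-array-integrand-stability}. Hence the first term is $o(n^{-1})$.

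The second term requires the recursion to be applied to the propagated test functions $\mathcal P_h^{\bd,K-k}\Xi$. These are bounded but possibly discontinuous because of the exit interfaces, so Lemma~\ref{lem:closed-array-rqmc-error} cannot be invoked through continuity. The way around this is to read Assumption~\ref{ass:boundary-array-rqmc} as referring to exactly these layer integrands. With that reading, the argument telescopes over the fixed number $K$ of layers. Since $\widehat\mu_{n,h,0}^\bd$ is deterministic, the variance at layer $0$ vanishes, and we obtain
\[
\operatorname{Var}\big(\widehat u^{\ARQMC,\bd}_{n,h}\big)\le\sum_{k<K}\E\big[V_n(G_{n,h,k}^\bd)\big]=o(n^{-1}).
\]

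\textbf{Main obstacle.} The delicate step is this bookkeeping: the assumption must supply the stability for the specific rank-coupled integrands built from the propagated payoff at every layer, not just for a single $\varphi$. I would state this reading explicitly. I would also check that frozen records occupy a set of $\eta$-strata of measure $(n-n_k)/n$ on which the integrand is constant in $z$, so that including them changes neither the quadrature identity nor the conditional mean.
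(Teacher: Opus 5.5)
Your proposal is correct and follows the paper's argument. Both use augmented records with frozen inactive entries, obtain the conditional-mean identity from fresh scrambling and marginal uniformity, get an $o(n^{-1})$ innovation variance from Assumption~\ref{ass:boundary-array-rqmc} plus the fixed-integrand scrambled-net bound, and close with the law of total variance over the fixed $K$ layers. Your explicit reading of Assumption~\ref{ass:boundary-array-rqmc}, as covering the rank-coupled integrands of the propagated and possibly discontinuous payoffs, is exactly what the paper's brief appeal to the closed-case induction tacitly uses. One cosmetic fix: the stopped kernel depends on the layer through $f(\cdot,T-t_k)$ and $T-\widehat\tau_h$, so either index it by $k$ or add the layer index to the augmented state.
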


\begin{proof}
Conditional on the augmented particle array at the beginning of each layer, inactive records remain fixed, while the active records are propagated using the fresh scrambled point set. Because the fresh scrambled point set is independent of the past and each scrambled point is marginally uniform on $[0,1)^r$, the stopped augmented update satisfies the same conditional-mean identity as \eqref{eq:array-conditional-mean}. Moreover, Assumption~\ref{ass:boundary-array-rqmc}, together with the fixed-integrand scrambled-net estimate, yields an $o(n^{-1})$ conditional innovation variance at each layer. Since $K$ is fixed, the law of total variance and the finite-layer induction used in Lemma~\ref{lem:closed-array-rqmc-error} give
\begin{equation*}
    \operatorname{Var}\big(\widehat u_{n,h}^{\ARQMC,\bd}(x,T)\big)=o(n^{-1}).
\end{equation*}
Iterating this conditional-mean identity over the $K$ layers and using the definition of the time-discrete stopped Feynman--Kac functional gives $\E[\widehat u_{n,h}^{\rm A\text{-}RQMC,\bd}(x,T)]=u_h^\bd(x,T)$, which completes the proof.
\end{proof}

The preceding sampling-error estimates can now be combined with the temporal discretization bounds to obtain the total mean-square error.

\begin{theorem}
\label{thm:array-rqmc-total-error}
Let $x\in\M$ be fixed and let $h=T/K$.

\emph{(i) \underline{Closed manifold.}} Suppose that Assumption~\ref{ass:array-rqmc} holds, that the assumptions of Lemmas~\ref{lem:hilbert-order-stability} and \ref{lem:weak-temporal-convergence} are satisfied, and that the layer point sets are constructed as above. Then
\begin{equation}
    \E\Big[\big|\widehat u_{n,h}^{\ARQMC,\cl}(x,T)-u^\cl(x,T)\big|^2\Big]\le Ch^2+o(n^{-1}),
    \label{eq:array-rqmc-total-mse}
\end{equation}
where $C$ is a positive constant independent of $h$, and the $o(n^{-1})$ term may depend on the fixed value of $h$.

\emph{(ii) \underline{Manifold with Dirichlet boundary.}} Suppose that Assumption~\ref{ass:bounded-data} and Assumption~\ref{ass:boundary-array-rqmc} hold, and that the temporal weak error satisfies \eqref{eq:dirichlet-temporal-error}. Then
\begin{equation}
    \E\Big[\big|\widehat u_{n,h}^{\ARQMC,\bd}(x,T)-u^\bd(x,T)\big|^2\Big]
    \le C\big(h+\mathcal E_{\partial}\big)^2+o(n^{-1}),
    \label{eq:boundary-array-rqmc-total-mse}
\end{equation}
where $C$ is a positive constant independent of $h$.
In particular, if $\mathcal E_{\partial}\le Ch$, then
\begin{equation*}
    \E\Big[\big|\widehat u_{n,h}^{\ARQMC,\bd}(x,T)-u^\bd(x,T)\big|^2\Big]\le Ch^2+o(n^{-1}).
\end{equation*}
\end{theorem}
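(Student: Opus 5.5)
The proof is a bias--variance decomposition, handled exactly as in Theorem~\ref{thm:direct-rqmc-total-error}. For $\star\in\{\cl,\bd\}$ we first record that the Array-RQMC estimator is unbiased for the time-discrete target, $\E\big[\widehat u_{n,h}^{\ARQMC,\star}(x,T)\big]=u_h^\star(x,T)$. In the closed case this follows from the layerwise representation established at the end of the proof of Lemma~\ref{lem:closed-array-rqmc-error}, where $\E[\widehat\mu_{n,h,k}(\varphi)]=\mu_{h,k}(\varphi)$ for every $k$. In the Dirichlet case it is the first assertion of Lemma~\ref{lem:boundary-array-rqmc-error}.

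With unbiasedness in hand, adding and subtracting $u_h^\star(x,T)$ and expanding the square makes the cross term vanish in expectation, giving
\begin{equation*}
\E\Big[\big|\widehat u_{n,h}^{\ARQMC,\star}(x,T)-u^\star(x,T)\big|^2\Big]
=\operatorname{Var}\big(\widehat u_{n,h}^{\ARQMC,\star}(x,T)\big)+\big|u_h^\star(x,T)-u^\star(x,T)\big|^2 .
\end{equation*}
The variance term is $o(n^{-1})$ for fixed $h$. In case (i) this is \eqref{eq:array-rqmc-variance}, whose hypotheses (Assumption~\ref{ass:array-rqmc}, the continuity and strict monotonicity of $F_{h,k}$ from Lemma~\ref{lem:hilbert-order-stability}, and the layer-point construction) are precisely those imposed in (i). In case (ii) it is Lemma~\ref{lem:boundary-array-rqmc-error}, which needs only Assumptions~\ref{ass:bounded-data} and \ref{ass:boundary-array-rqmc}.

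For the bias term in case (i), we take the pointwise value at $x$ of the sup-norm bound \eqref{eq:weak-temporal-convergence} from Lemma~\ref{lem:weak-temporal-convergence}, so $|u_h^\cl-u^\cl|^2\le C^2h^2$. In case (ii), \eqref{eq:dirichlet-temporal-error} gives $|u_h^\bd-u^\bd|^2\le (C h+\mathcal E_\partial)^2\le C'(h+\mathcal E_\partial)^2$. Substituting $\mathcal E_\partial\le Ch$ then yields the final bound $Ch^2+o(n^{-1})$.

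Two points need care. First, the constant $C$ in front of $h^2$ must be independent of $h$. This holds because it comes only from the temporal lemma. Second, all $h$-dependence in the sampling term must be confined to the $o(n^{-1})$ symbol, which is legitimate since $h$ and $K$ are fixed throughout the Array-RQMC analysis. I expect no genuine obstacle: the only nontrivial ingredient is unbiasedness, and the preceding lemmas already supply it.
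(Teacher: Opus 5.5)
Your proposal is correct and matches the paper's proof: unbiasedness for $u_h^\star$ from Lemmas~\ref{lem:closed-array-rqmc-error} and \ref{lem:boundary-array-rqmc-error}, the identity $\E\bigl[|\widehat u_{n,h}^{\ARQMC,\star}-u^\star|^2\bigr]=\operatorname{Var}\bigl(\widehat u_{n,h}^{\ARQMC,\star}\bigr)+|u_h^\star-u^\star|^2$, and the temporal bounds of Lemma~\ref{lem:weak-temporal-convergence} and \eqref{eq:dirichlet-temporal-error}. Your remarks on the inequality $(Ch+\mathcal E_\partial)^2\le C'(h+\mathcal E_\partial)^2$ and on keeping all $h$-dependence of the sampling error inside the $o(n^{-1})$ term only make explicit details that the paper leaves implicit.
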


\begin{proof}
Lemma~\ref{lem:closed-array-rqmc-error} gives unbiasedness for the time-discrete closed-manifold estimator and an $o(n^{-1})$ variance rate. The preceding Dirichlet argument gives unbiasedness for the time-discrete estimator and an $o(n^{-1})$ variance rate. The result follows from
\begin{equation*}
    \E\Big[\big|\widehat u_{n,h}^{\ARQMC,\star}-u^\star\big|^2\Big]=\operatorname{Var}\big(\widehat u_{n,h}^{\ARQMC,\star}\big)+\big|u_h^\star-u^\star\big|^2,
\end{equation*}
Lemma~\ref{lem:weak-temporal-convergence}, and \eqref{eq:dirichlet-temporal-error}.
\end{proof}

\section{Numerical experiments}
\label{sec:numerical-experiments}

In this section, we present numerical experiments to compare the MC, direct RQMC, and Array-RQMC p-Ret estimators. We consider temporal convergence, sampling variance at fixed initial points, and RMSE over prescribed test sets.

For the first two numerical experiments, the RQMC point sets are scrambled Sobol' nets generated using MATLAB's \texttt{sobolset}. For the high-dimensional Lawson-product experiment, the Sobol' nets are generated using the Joe--Kuo $D^{(6)}$ direction numbers \cite{JoeKuo2008}, based on the data set \texttt{new-joe-kuo-6.21201}, and are randomized by a lower-triangular linear-matrix scramble followed by a digital shift \cite{HongHickernell2003,Owen2003AlternativeScrambling}.

For a fixed $x\in \M$, let $\widehat u_{n,h}^{M,\star,[\nu]}(x,T)$ with $\nu=1,\ldots,R$ be $R$ independent randomized replications of \eqref{eq:unified-estimator}, where $M\in\{\MC,\allowbreak\RQMC,\allowbreak\ARQMC\}$ and $\star\in\{\cl,\bd\}$. Define
\begin{equation*}
    \overline u_{n,h,R}^{M,\star}(x,T)=\frac1R\sum_{\nu=1}^R\widehat u_{n,h}^{M,\star,[\nu]}(x,T),
\end{equation*}
and estimate the variance by
\begin{equation}
    \widehat{\operatorname{Var}}\big(\widehat u_{n,h}^{M,\star}(x,T)\big)=\frac1{R-1}\sum_{\nu=1}^R\Big(\widehat u_{n,h}^{M,\star,[\nu]}(x,T)-\overline u_{n,h,R}^{M,\star}(x,T)\Big)^2.
    \label{eq:numerical-variance}
\end{equation}

For $N_x$ fixed test points $x_j=\Phi(\xi_j)$, $j=1,\ldots,N_x$, we define the discrete maximum error by
\begin{equation}
    E_{\max}^{M,\star}(n,h)=\max_{1\le j\le N_x}\big|\overline u_{n,h,R}^{M,\star}(x_j,T)-u^\star(x_j,T)\big|.
    \label{eq:numerical-max-error}
\end{equation}
For the spatial error tests, no averaging over independent randomized replications is performed. $\widehat u_{n,h}^{M,\star}$ denotes a single randomized realization of the corresponding estimator, and the root mean squared error is defined by
\begin{equation}
    \operatorname{RMSE}_{M,\star}(n,h)=\Bigg[\frac1{N_x}\sum_{j=1}^{N_x}\big|\widehat u_{n,h}^{M,\star}(x_j,T)-u^\star(x_j,T)\big|^2\Bigg]^{1/2}.
    \label{eq:numerical-rmse}
\end{equation}

Unless otherwise stated, all experiments use $T=0.20$. For the sampling tests, the number of time steps is fixed at $K=64$, and empirical variances are estimated
from $R=50$ independent randomized replications. Spatial errors are evaluated over $N_x=100$ fixed points, with the same test set used for all sampling methods and sample sizes. Temporal convergence is examined separately with $K=2^j$, $j=2,\ldots,7$, using $n=2^{15}$ direct RQMC samples.

\subsection{A general diffusion operator on a closed star-shaped torus}
\label{subsec:star-torus}

We first consider a general diffusion problem on a closed two-dimensional star-shaped torus shown in Figure~\ref{fig:star-torus-overview}(\subref{fig:star-torus-geometry}), which is parameterized by
\begin{equation}
    \Phi(\theta,\omega)=\big(\rho(\theta)\cos\omega,\rho(\theta)\sin\omega,r(\theta)\sin\theta\big),
    \label{eq:star-torus-param}
\end{equation}
where $(\theta,\omega)\in U=[0,2\pi)\times[0,2\pi)$, $\theta,\omega$ are periodic, and 
\begin{equation*}
    r(\theta)=r_0+\varepsilon_r\sin(\nu\theta),
    \quad
    \rho(\theta)=a-r(\theta)\cos\theta,
\end{equation*}
with $a=2$, $r_0=\frac35$, $\varepsilon_r=\frac{3}{40}$, and $\nu=5$.

We next specify the diffusion tensor $A$ and tangent drift $b$ appearing in \eqref{eq:intrinsic-operator}. Let $\{\widehat e_\theta,\widehat e_\omega\}$ denote the orthonormal tangent frame induced by \eqref{eq:star-torus-param}, and set $\varphi(\theta,\omega)=0.30\sin(\theta+\omega)$. The diffusion vector fields are prescribed by
\begin{eqnarray*}
    &&\sigma_1=\big(1+0.20\sin\theta\big)\Big(\cos\varphi\,\widehat e_\theta+\sin\varphi\,\widehat e_\omega\Big),\\
    &&\sigma_2=\big(0.75+0.15\cos\omega\big)\Big(-\sin\varphi\,\widehat e_\theta+\cos\varphi\,\widehat e_\omega\Big),
\end{eqnarray*}
from which the diffusion tensor in \eqref{eq:intrinsic-operator} is constructed as $A=\sigma_1\otimes\sigma_1+\sigma_2\otimes\sigma_2$. We further prescribe the tangent drift $b$ in \eqref{eq:intrinsic-operator} by
\begin{equation*}
    b=\big(0.10\sin\omega+0.05\cos(\theta+\omega)\big)\widehat e_\theta+\big(0.08\cos\theta-0.04\sin(2\omega)\big)\widehat e_\omega.
\end{equation*}

We take the exact solution
\begin{equation}
    u_{\rm ex}\big(\Phi(\theta,\omega),t\big)=e^{-t}\Big[\sin(2\theta)+\frac12\cos(3\omega)+\frac14\sin(\theta+\omega)\Big],
    \label{eq:star-torus-exact}
\end{equation}
with $u_0=u_{\rm ex}(\cdot,0)$ and $f=\partial_tu_{\rm ex}-\mathcal L u_{\rm ex}$.


\begin{figure}[ht!]
    \centering
    \begin{subfigure}[t]{0.48\textwidth}
        \centering
        \includegraphics[width=\linewidth,trim=8 35 8 8,clip]{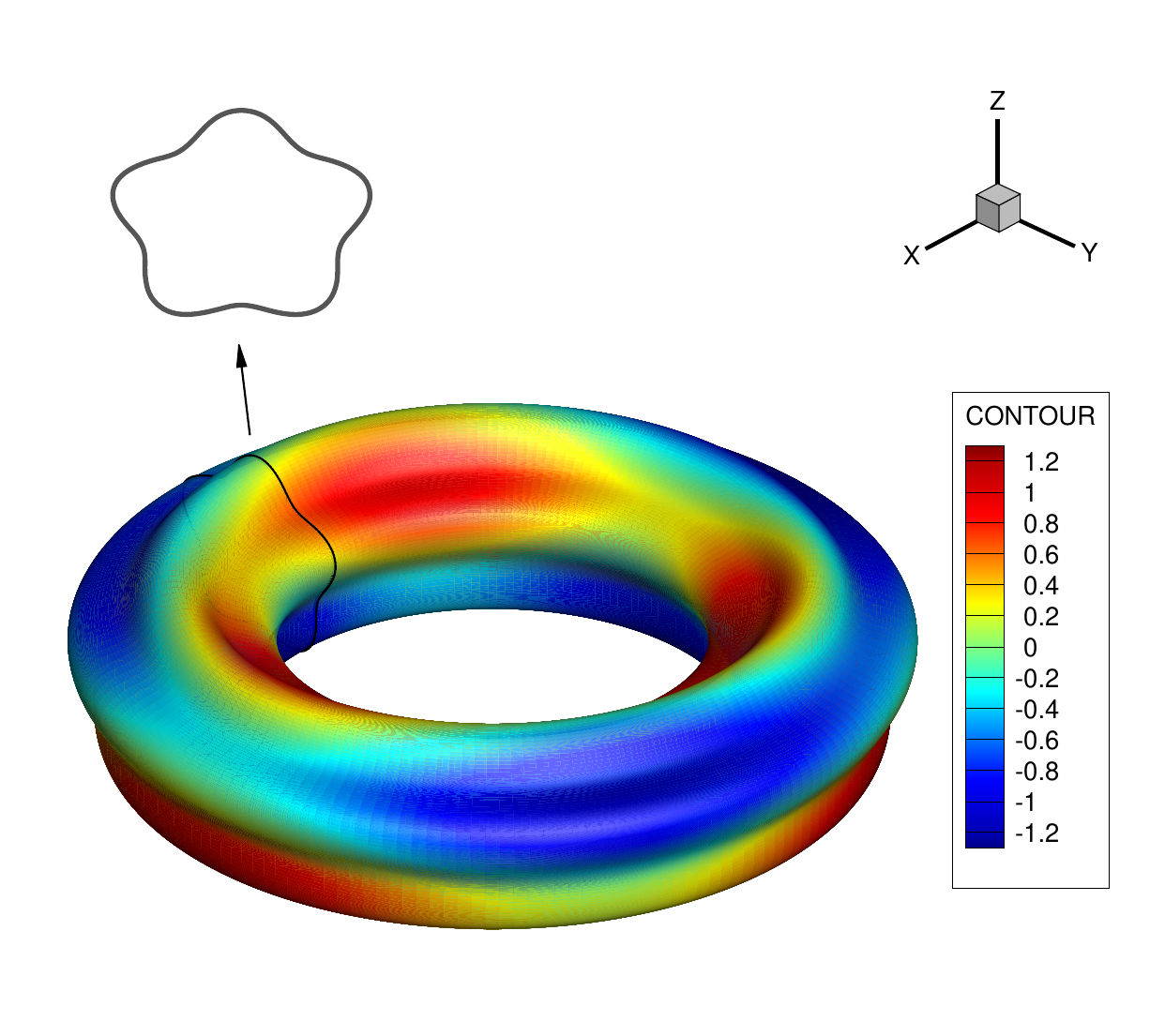}
        \caption{Star-shaped torus colored by the exact solution $u_{\rm ex}(\cdot,T)$ at $T=0.20$.}
        \label{fig:star-torus-geometry}
    \end{subfigure}
    \hfill
    \begin{subfigure}[t]{0.48\textwidth}
        \centering
        \includegraphics[width=\linewidth]{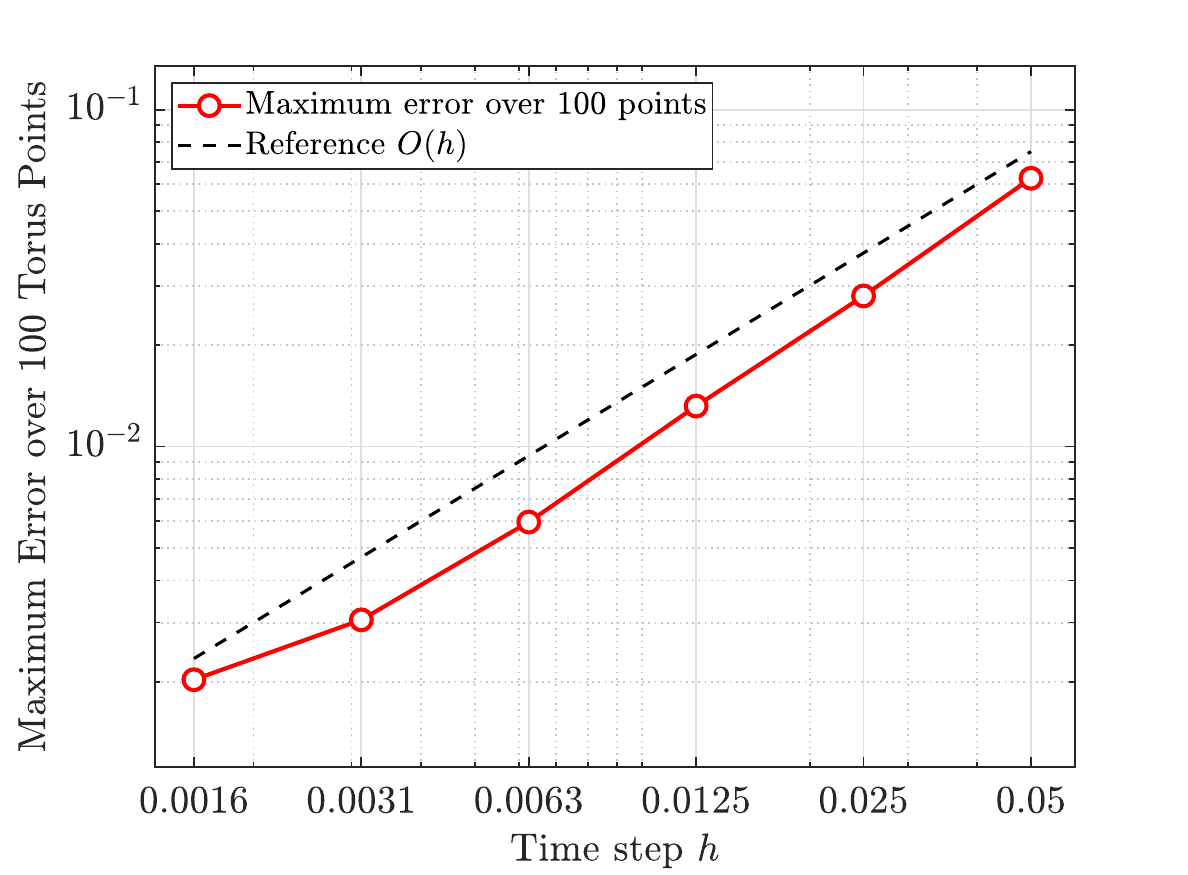}
        \caption{Temporal convergence measured by the maximum error over $100$ fixed test points.}
        \label{fig:star-torus-temporal}
    \end{subfigure}
    \caption{Star-shaped torus parameterized by \eqref{eq:star-torus-param} and temporal convergence of the p-Ret discretization.}
    \label{fig:star-torus-overview}
\end{figure}

We first verify the temporal convergence of the p-Ret discretization. Figure~\ref{fig:star-torus-overview}(\subref{fig:star-torus-temporal}) shows first-order decay in $h$, in agreement with Lemma~\ref{lem:weak-temporal-convergence}. 
We also compare the sampling performance of the three estimators using $n=2^j$, $j=3,\ldots,12$. At the fixed initial point $x_0$, Figure~\ref{fig:star-torus-sampling}(\subref{fig:star-torus-variance}) shows empirical variance rates of approximately $O(n^{-1.07})$, $O(n^{-1.27})$, and $O(n^{-1.77})$ for MC, direct RQMC, and Array-RQMC, respectively. The MC rate agrees with the expected $O(n^{-1})$ behavior, while both RQMC methods exhibit improved variance decay, consistent with the asymptotic estimate of Lemma~\ref{lem:closed-direct-rqmc-general}. The spatial RMSE in Figure~\ref{fig:star-torus-sampling}(\subref{fig:star-torus-rmse}) shows the same ordering, with fitted rates approximately $O(n^{-0.5})$, $O(n^{-0.59})$, and $O(n^{-0.79})$, respectively.



\begin{figure}[ht!]
    \centering
    \begin{subfigure}[t]{0.48\textwidth}
        \centering
        \includegraphics[width=\textwidth]{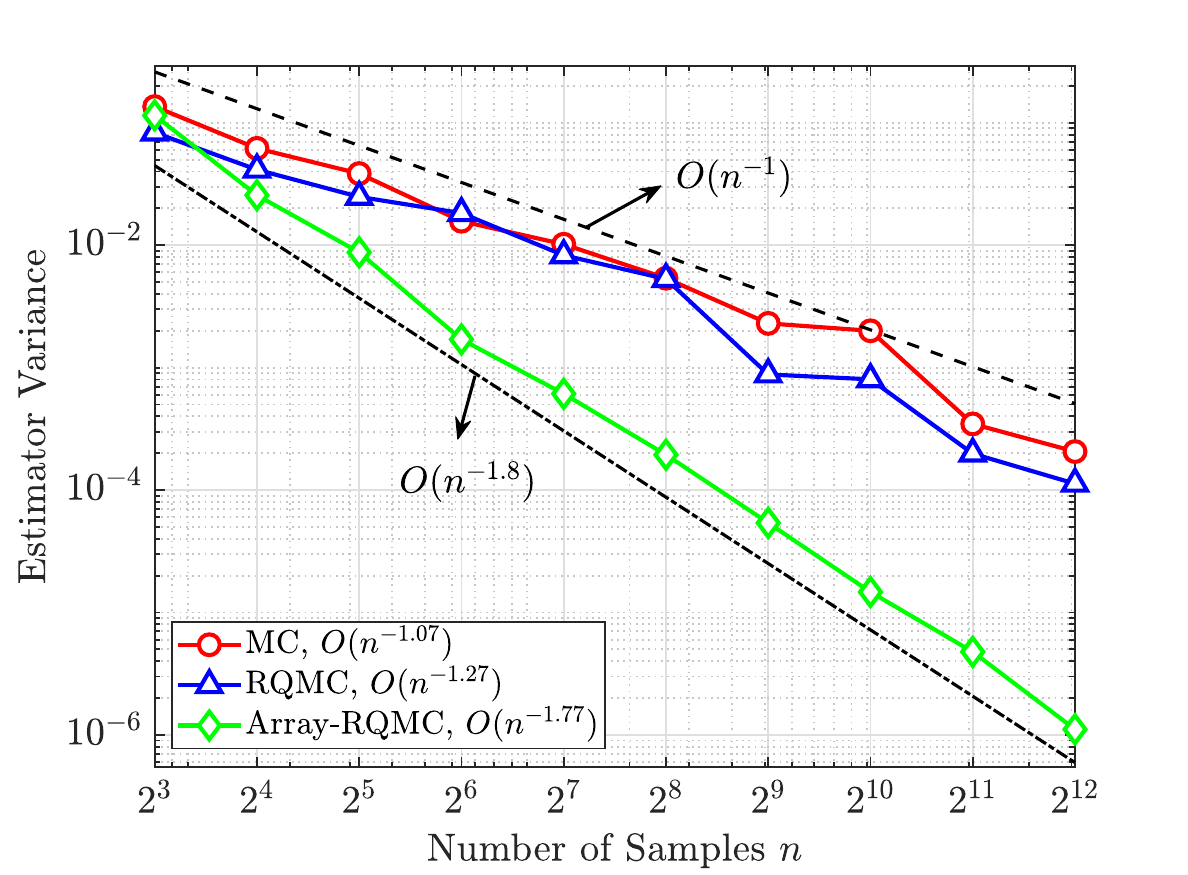}
        \caption{Empirical variance of the MC, direct RQMC, and Array-RQMC p-Ret estimators at the fixed initial point $x_0$.}
        \label{fig:star-torus-variance}
    \end{subfigure}
    \hfill
    \begin{subfigure}[t]{0.48\textwidth}
        \centering
        \includegraphics[width=\textwidth]{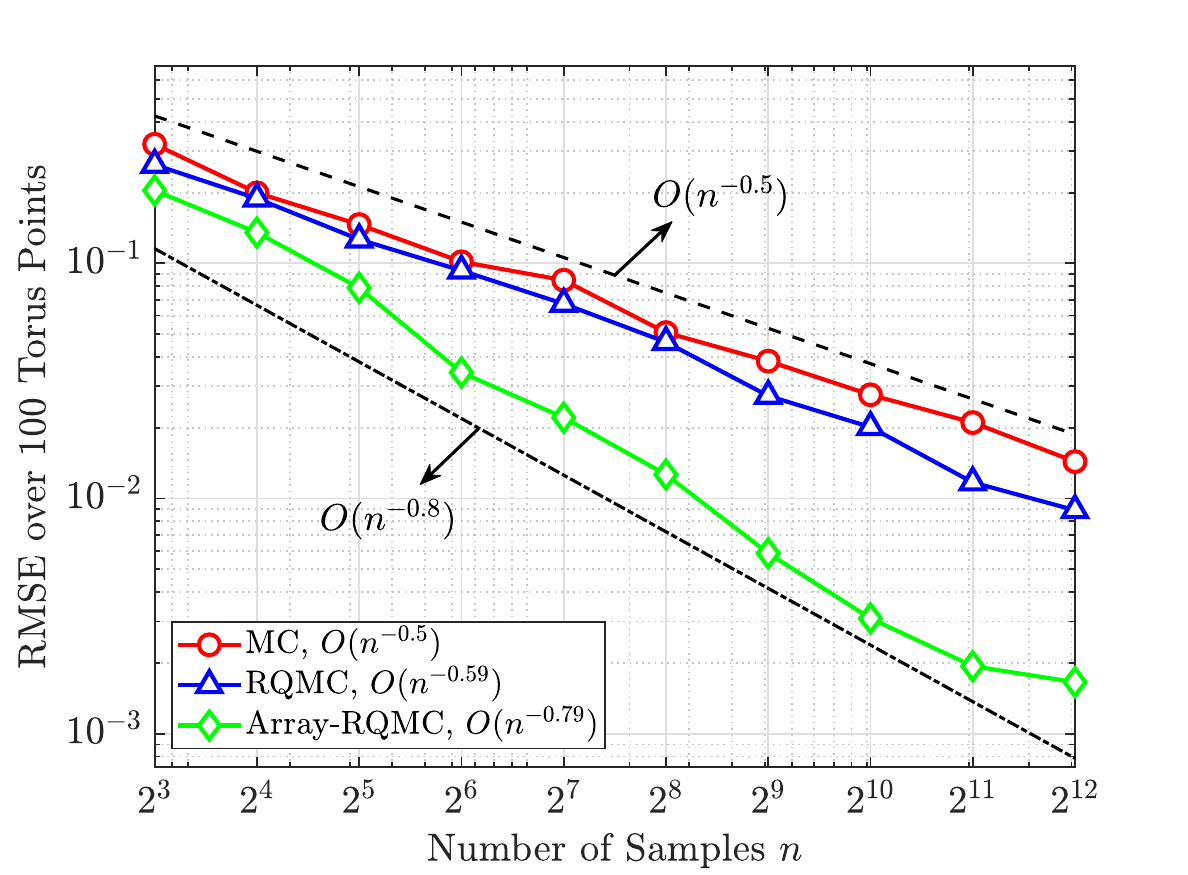}
        \caption{RMSE over $100$ fixed test points on the star-shaped torus.}
        \label{fig:star-torus-rmse}
    \end{subfigure}
    \caption{Sampling performance on the star-shaped torus parameterized by \eqref{eq:star-torus-param} for the MC, direct RQMC, and Array-RQMC p-Ret estimators.}
    \label{fig:star-torus-sampling}
\end{figure}



\subsection{A general diffusion operator on a wavy cylinder with boundary}
\label{subsec:wavy-cylinder}
We now examine the effect of a Dirichlet boundary and numerical first-exit treatment on the two-dimensional wavy cylinder shown in Figure~\ref{fig:wavy-cylinder-overview}(\subref{fig:wavy-cylinder-geometry}). This surface can be represented by the parameterization
\begin{equation} \label{eq:wavy-cylinder-param}
    \Phi(\theta,z)=\big(r(z)\cos\theta,r(z)\sin\theta,z\big),
\end{equation}
where $(\theta,z)\in U=[0,2\pi)\times(0,H)$, $\theta$ is periodic, and
\begin{equation*}
    r(z)=R_0+R_1\sin\Big(\frac{2\pi z}{H}\Big)+R_2\cos\Big(\frac{4\pi z}{H}\Big),
\end{equation*}
with $R_0=1.25$, $R_1=0.20$, $R_2=0.08$, and $H=1.60$. Its boundary consists of the two closed curves corresponding to $z=0$ and $z=H$.

In this example, the diffusion tensor $A$ and tangent drift $b$ in \eqref{eq:intrinsic-operator} are chosen as follows.
Let $y=z/H$ and let $\{\widehat e_\theta,\widehat e_z\}$ denote the induced orthonormal tangent frame. Define $\varphi(\theta,z)=0.28\sin(\theta+0.40\pi y)+0.12\cos(2\theta-\pi y)$. The diffusion vector fields are prescribed by
\begin{eqnarray*}
    &&\sigma_1=\big(0.95+0.18\sin\theta\cos(\pi y)\big)\Big(\cos\varphi\,\widehat e_\theta+\sin\varphi\,\widehat e_z\Big),\\
    &&\sigma_2=\big(0.72+0.14\cos(2\theta)+0.08\sin(\pi y)\big)\Big(-\sin\varphi\,\widehat e_\theta+\cos\varphi\,\widehat e_z\Big),
\end{eqnarray*}
which define the diffusion tensor in \eqref{eq:intrinsic-operator} by $A=\sigma_1\otimes\sigma_1+\sigma_2\otimes\sigma_2$. The tangent drift $b$ in \eqref{eq:intrinsic-operator} is given by
\begin{eqnarray*}
    &&b=\Big[0.14\sin(2\theta)\big(1+0.25\cos(\pi y)\big)+0.06\cos(\theta+\pi y)\Big]\widehat e_\theta\\
    &&\hspace{15pt}+\Big[0.10\cos\theta\sin(\pi y)-0.07\big(y-\tfrac12\big)+0.04\sin(2\theta)\sin(2\pi y)\Big]\widehat e_z .
\end{eqnarray*}

We take the exact solution
\begin{equation}
    \begin{split}
    u_{\rm ex}\big(\Phi(\theta,z),t\big)=\;&e^{-t}\Big[1+0.35\cos\theta\cos(\pi y)+0.22\sin(2\theta)\sin(2\pi y)\\
    &+0.18\cos(3\theta)\sin(\pi y)+0.12y(1-y)\sin\theta\Big],
    \label{eq:wavy-cylinder-exact}
    \end{split}
\end{equation}
and prescribe $u_0=u_{\rm ex}(\cdot,0)$, $\psi=u_{\rm ex}\big|_{\partial\M}$, and the source function $f=\partial_tu_{\rm ex}-\mathcal L u_{\rm ex}$.

\begin{figure}[ht!]
    \centering
    \begin{subfigure}[t]{0.48\textwidth}
        \centering
        \includegraphics[width=\linewidth,trim=8 45 8 8,clip]{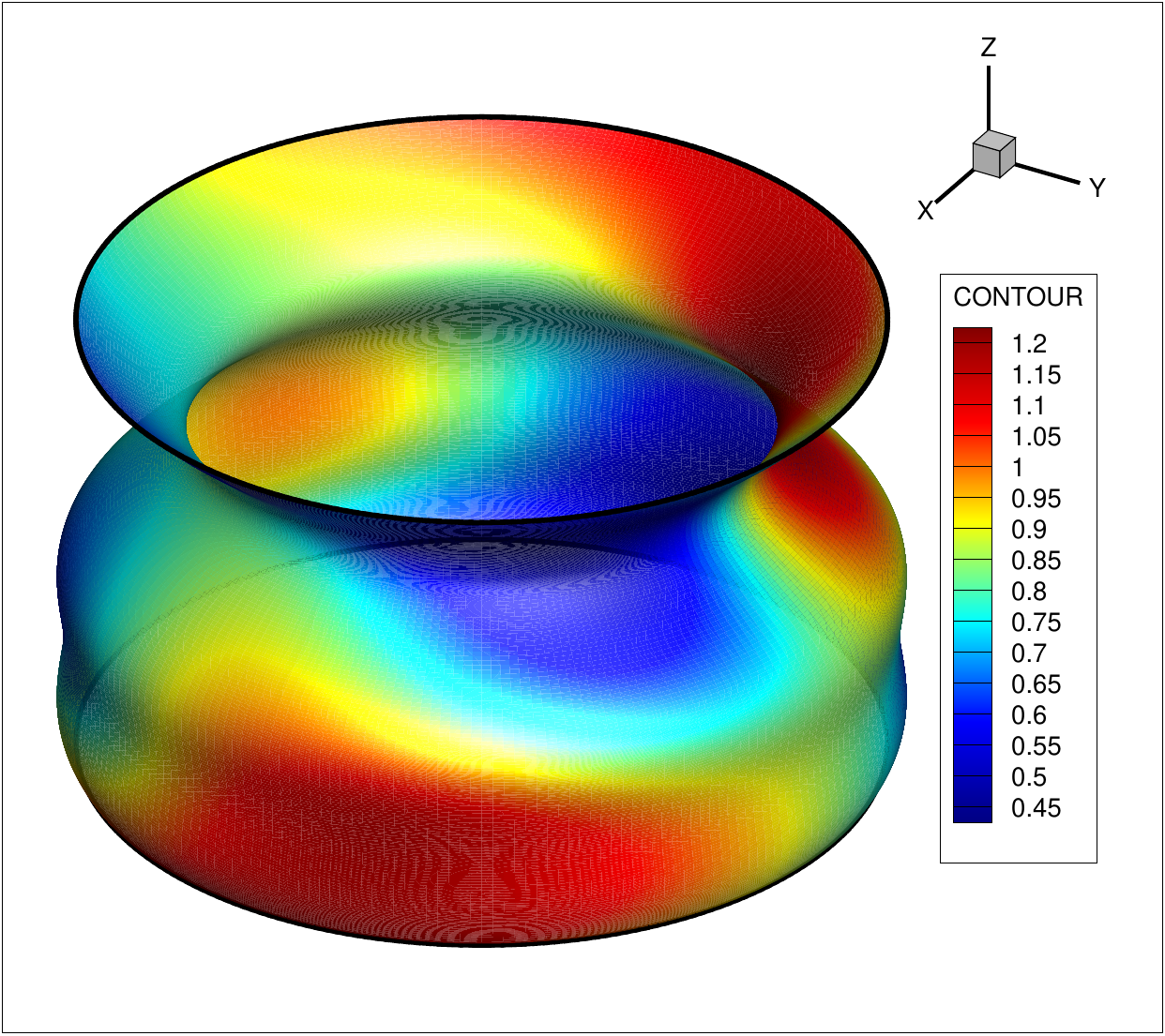}
        \caption{Wavy cylinder colored by the exact solution $u_{\rm ex}(\cdot,T)$ at $T=0.20$.}
        \label{fig:wavy-cylinder-geometry}
    \end{subfigure}
    \hfill
    \begin{subfigure}[t]{0.48\textwidth}
        \centering
        \includegraphics[width=\linewidth]{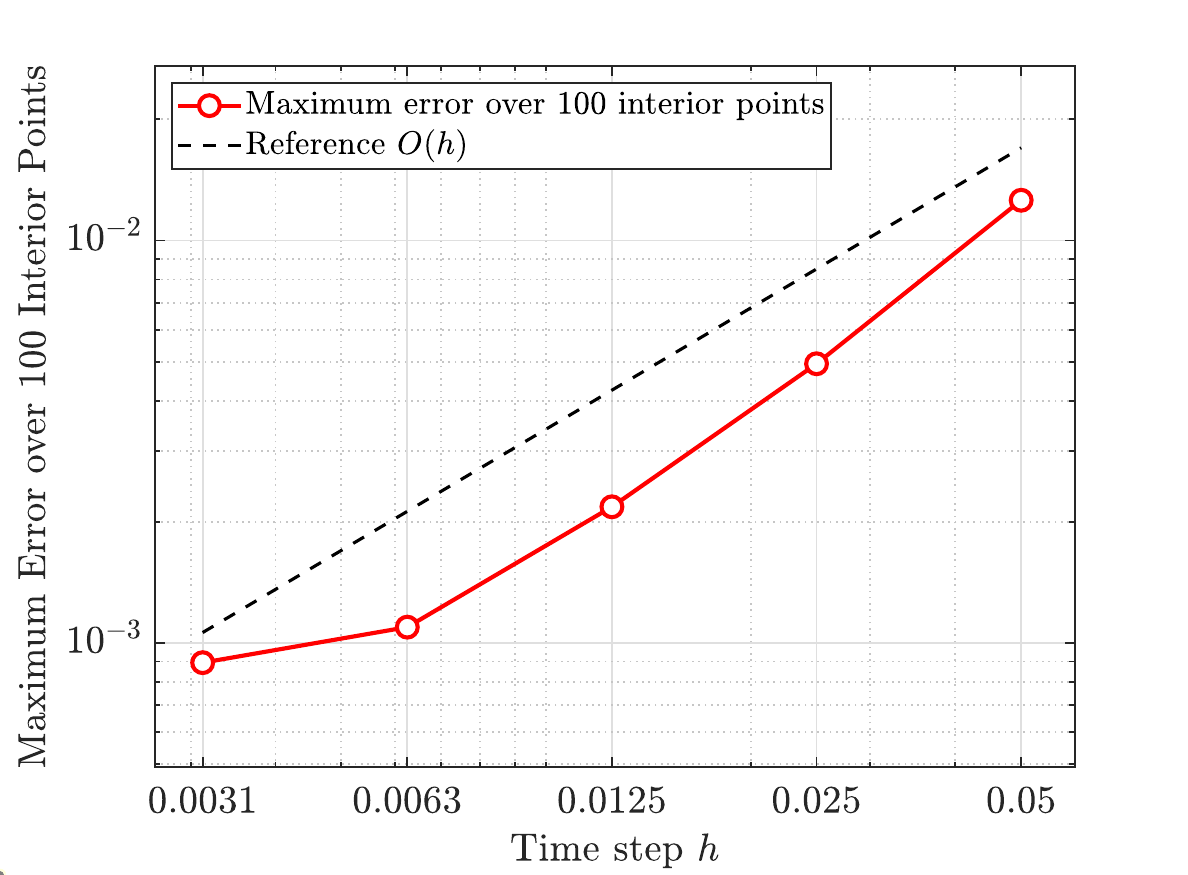}
        \caption{Temporal convergence measured by the maximum error over $100$ fixed interior points.}
        \label{fig:wavy-cylinder-temporal}
    \end{subfigure}
    \caption{Wavy-cylinder parameterized by \eqref{eq:wavy-cylinder-param} and temporal convergence of the stopped p-Ret discretization.}
    \label{fig:wavy-cylinder-overview}
\end{figure}

To examine the temporal error, the test points are restricted to the compact interior region $0.35H\le z\le0.65H$. Figure~\ref{fig:wavy-cylinder-overview}(\subref{fig:wavy-cylinder-temporal}) shows an approximately first-order decay, consistent with Remark~\ref{rem:dirichlet-temporal-error}. The slightly less regular convergence than in the closed case reflects the additional numerical boundary-exit contribution $\mathcal E_{\partial}$.


\begin{figure}[ht!]
    \centering
    \begin{subfigure}[t]{0.48\textwidth}
        \centering
        \includegraphics[width=\textwidth]{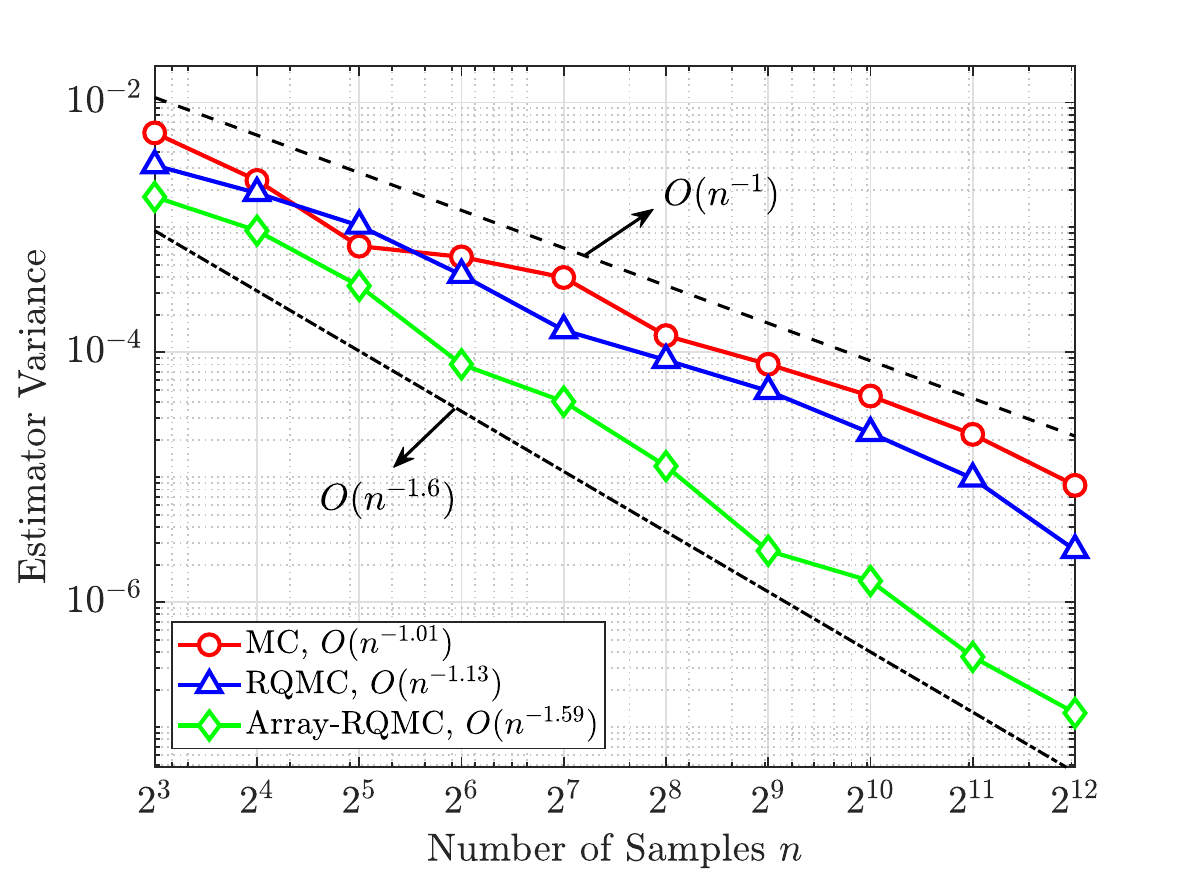}
        \caption{Empirical variance of the three p-Ret estimators at the fixed initial point $x_0$.}
        \label{fig:wavy-cylinder-variance}
    \end{subfigure}
    \hfill
    \begin{subfigure}[t]{0.48\textwidth}
        \centering
        \includegraphics[width=\textwidth]{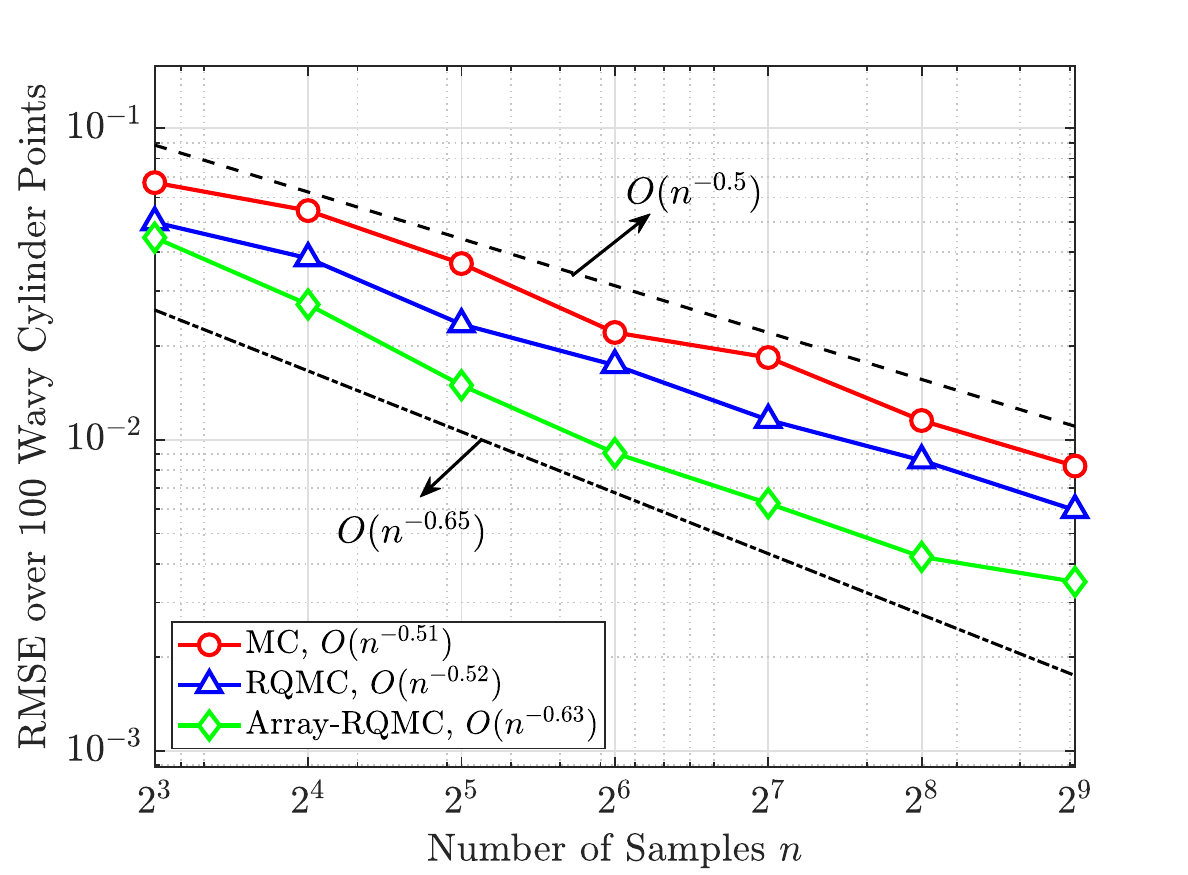}
        \caption{RMSE over $100$ fixed test points on the wavy cylinder.}
        \label{fig:wavy-cylinder-rmse}
    \end{subfigure}
    \caption{Sampling performance on the wavy cylinder parameterized by \eqref{eq:wavy-cylinder-param} for the MC, direct RQMC, and Array-RQMC stopped p-Ret estimators.}
    \label{fig:wavy-cylinder-sampling}
\end{figure}


We next compare the sampling variances at $x_0=\Phi(1.10,0.82H)$ using $n=2^j$, $j=3,\ldots,12$. As shown in Figure~\ref{fig:wavy-cylinder-sampling}(\subref{fig:wavy-cylinder-variance}), the fitted rates are approximately $O(n^{-1.01})$, $O(n^{-1.13})$, and $O(n^{-1.59})$ for MC, direct RQMC, and Array-RQMC, respectively. The RQMC improvement is weaker than in the closed-manifold experiment, consistently with the first-exit discontinuities underlying the estimate in Lemma~\ref{lem:boundary-direct-rqmc-variance}.


For the spatial test, the fixed test points cover the full interval $0<z<H$, and we take $n=2^j$, $j=3,\ldots,9$. Figure~\ref{fig:wavy-cylinder-sampling}(\subref{fig:wavy-cylinder-rmse}) shows the same qualitative behavior: direct RQMC generally improves upon MC, while Array-RQMC yields the smallest RMSE over the tested range, with an empirical rate of approximately $O(n^{-0.63})$.



\subsection{A Laplace--Beltrami problem on high-dimensional products of Lawson Klein bottles}
\label{subsec:lawson-klein-product}

We finally consider a high-dimensional closed-manifold problem on Cartesian products of Lawson Klein bottles to investigate the effect of increasing dimension. Let $\mathcal K$ denote the Lawson Klein bottle $\tau_{2,1}$ (cf.~\cite{Lawson1970}). It is a two-dimensional manifold minimally immersed in $S^3\subset\mathbb R^4$ and is represented by
\begin{equation}
    \Phi_{\mathcal K}(\theta,\omega)=\big(\cos(2\theta)\cos\omega,\sin(2\theta)\cos\omega,\cos\theta\sin\omega,\sin\theta\sin\omega\big)\in S^3\subset\mathbb R^4,
    \label{eq:lawson-klein-param}
\end{equation}
where $\theta\in[0,\pi)$ and $\omega\in[0,2\pi)$, with the identifications
\begin{equation}
    (\theta+\pi,\omega)\sim(\theta,-\omega),
    \quad
    (\theta,\omega+2\pi)\sim(\theta,\omega).
    \label{eq:lawson-klein-identification}
\end{equation}

For an even intrinsic dimension $m$, let $p=m/2$ and define
\begin{equation}
    \mathcal M_m=\underbrace{\mathcal K\times\cdots\times\mathcal K}_{p\ {\rm factors}},
    \quad
    \xi=(\theta_1,\omega_1,\ldots,\theta_p,\omega_p).
    \label{eq:lawson-product-manifold}
\end{equation}
The corresponding product parameterization is
\begin{equation}
    \Phi_m(\xi)=
    \begin{pmatrix}
        \Phi_{\mathcal K}(\theta_1,\omega_1)\\
        \Phi_{\mathcal K}(\theta_2,\omega_2)\\
        \vdots\\
        \Phi_{\mathcal K}(\theta_p,\omega_p)
    \end{pmatrix}
    \in\mathbb R^{4p}=\mathbb R^{2m}.
    \label{product para}
\end{equation}
Thus, $\mathcal M_m$ has intrinsic dimension $m$.

In this experiment, we take $A=I_{T\mathcal M_m}$ and $b=0$ in \eqref{eq:intrinsic-operator} so that $\mathcal L=\frac12\Delta_{\mathcal M_m}$. For the exact solution, define
\begin{equation*}
    \eta_j=0.45\cos(2\theta_j)+0.30\cos\omega_j+0.25\sin\omega_j\cos\theta_j,
    \quad
    j=1,\ldots,p,
\end{equation*}
which is compatible with the identifications in \eqref{eq:lawson-klein-identification}. With $\alpha=0.30$ and the cyclic convention $\eta_{p+1}=\eta_1$, set
\begin{equation}
    S_m(\xi)=\frac1{\sqrt p}\sum_{j=1}^{p}\eta_j+\frac{\alpha}{p}\sum_{j=1}^{p}\eta_j\eta_{j+1},
    \quad
    Q_m(\xi)=1+\frac12\sin S_m(\xi).
    \label{eq:lawson-coupled-q}
\end{equation}
The second term in $S_m$ couples neighboring Lawson factors. We take
\begin{equation}
    u_{\rm ex}\big(\Phi_m(\xi),t\big)
    =
    \cos(t)Q_m(\xi),
    \label{eq:lawson-exact}
\end{equation}
with $u_0=u_{\rm ex}(\cdot,0)$ and $f=\partial_tu_{\rm ex}-\frac12\Delta_{\mathcal M_m}u_{\rm ex}$.

For the present problem, we take $r=m$, so that each p-Ret step uses $r-1=m-1$ uniform variables. Accordingly, direct RQMC operates in dimension $s=(m-1)K$, whereas Array-RQMC uses an $m$-dimensional randomized point set at each time layer.
Direct RQMC uses a single randomized net in dimension $s$, while Array-RQMC generates an independently randomized $m$-dimensional net at each time layer. For the Lawson manifold, the Hilbert ordering is made compatible with the identification \eqref{eq:lawson-klein-identification} by replacing a parameter representative, when necessary, with its equivalent representative obtained through $\omega\mapsto-\omega$ modulo $2\pi$ before normalization and ordering. This preprocessing changes only the representative used for sorting and leaves the p-Ret dynamics and the discrete Feynman--Kac functional unchanged.


\begin{wrapfigure}{r}{0.50\textwidth}
\vspace{-12pt}
\centering
\includegraphics[width=0.47\textwidth]{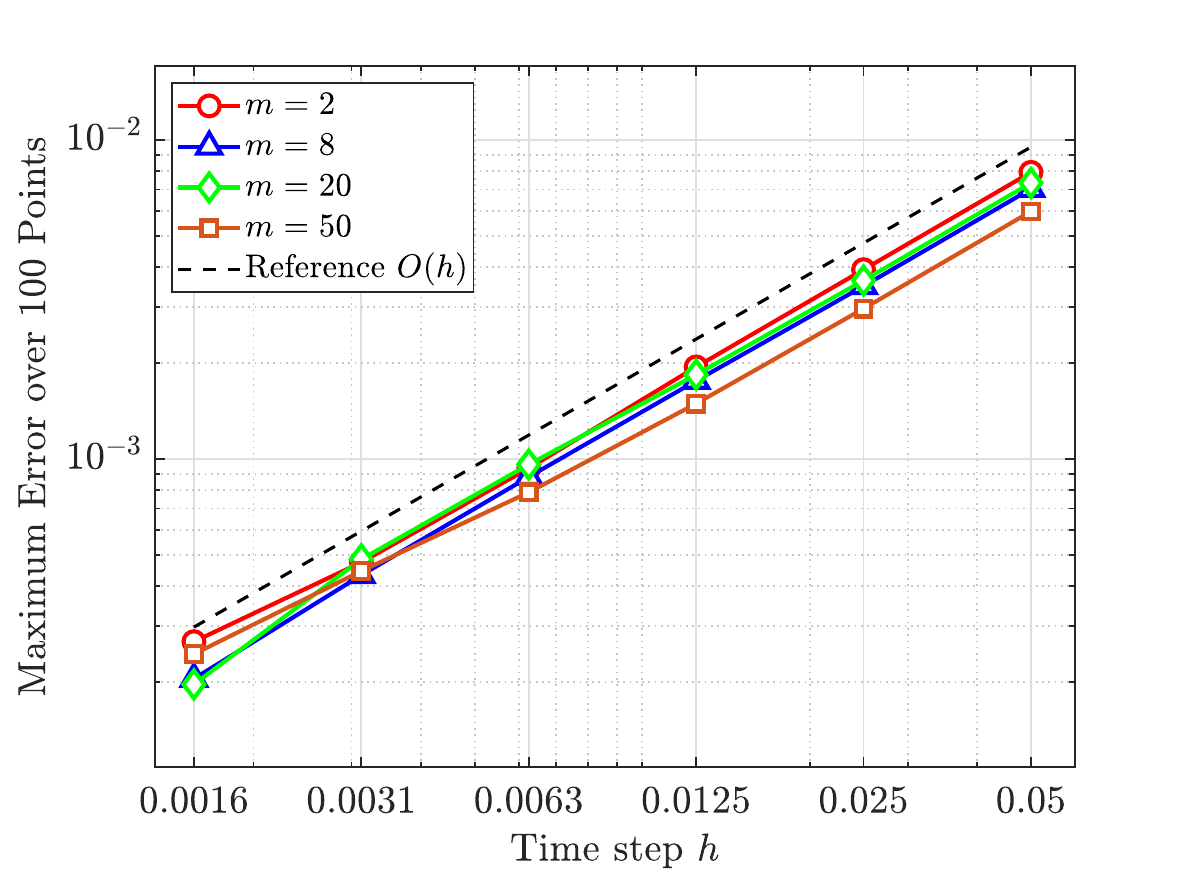}
\vspace{2pt}
\caption{Temporal convergence on products of Lawson Klein bottles parameterized by \eqref{product para} for $m=2,8,20,$ and $50$.}
\label{fig:lawson-temporal}
\vspace{-4pt}
\end{wrapfigure}


We first examine the temporal convergence for the representative intrinsic dimensions $m=2,8,20,$ and $50$, using fixed test points obtained from coordinate prefixes of a common $50$-dimensional point set. For each dimension, the temporal error is measured by the discrete maximum error over the prescribed test set while the time step is successively refined. Figure~\ref{fig:lawson-temporal} shows a clear first-order decay across all four dimensions. The fitted convergence orders remain close to the reference rate $O(h)$ predicted by Lemma~\ref{lem:weak-temporal-convergence}, with no evident loss of temporal order as the intrinsic dimension increases. These results therefore support the first-order temporal accuracy of the p-Ret discretization from a single Lawson Klein bottle up to the $50$-dimensional product manifold.

We next investigate the dependence of the sampling variance on the intrinsic dimension. For $m=2p$, the fixed initial coordinate is chosen as
\begin{equation*}
    \xi_0^{(m)}=(\theta_{0,1},\omega_{0,1},\ldots,\theta_{0,p},\omega_{0,p}),
\end{equation*}
where
\begin{eqnarray*}
    &&\theta_{0,j}=\operatorname{mod}\big(0.42+0.37(j-1),\pi\big),\\
    &&\omega_{0,j}=\operatorname{mod}\big(1.10+0.51(j-1)+0.17\sin j,2\pi\big),
    \quad
    j=1,\ldots,p.
\end{eqnarray*}
We take $m=2,4,8,16,20,30,40,50$ and $n=2^j$, $j=3,\ldots,12$. Figure~\ref{fig:lawson-variance} shows how the variance reduction changes with the intrinsic dimension. For MC, the fitted rates remain close to $O(n^{-1})$ for all tested dimensions, showing little sensitivity of the convergence order to $m$. Direct RQMC gives a clear improvement in low dimensions, with the fitted rate decreasing from approximately $O(n^{-1.30})$ for $m=2$ to $O(n^{-1.10})$ for $m=20$. As $m$ increases further, the rate approaches the MC rate. This deterioration is consistent with the rapid growth of the nominal integration dimension $s=(m-1)K$; with $K=64$, $s$ increases from $64$ for $m=2$ to $3136$ for $m=50$, making the full path-space RQMC construction progressively less effective.

\begin{figure}[htbp]
    \centering
    \begin{subfigure}[t]{0.32\textwidth}
        \centering
        \includegraphics[width=\textwidth]{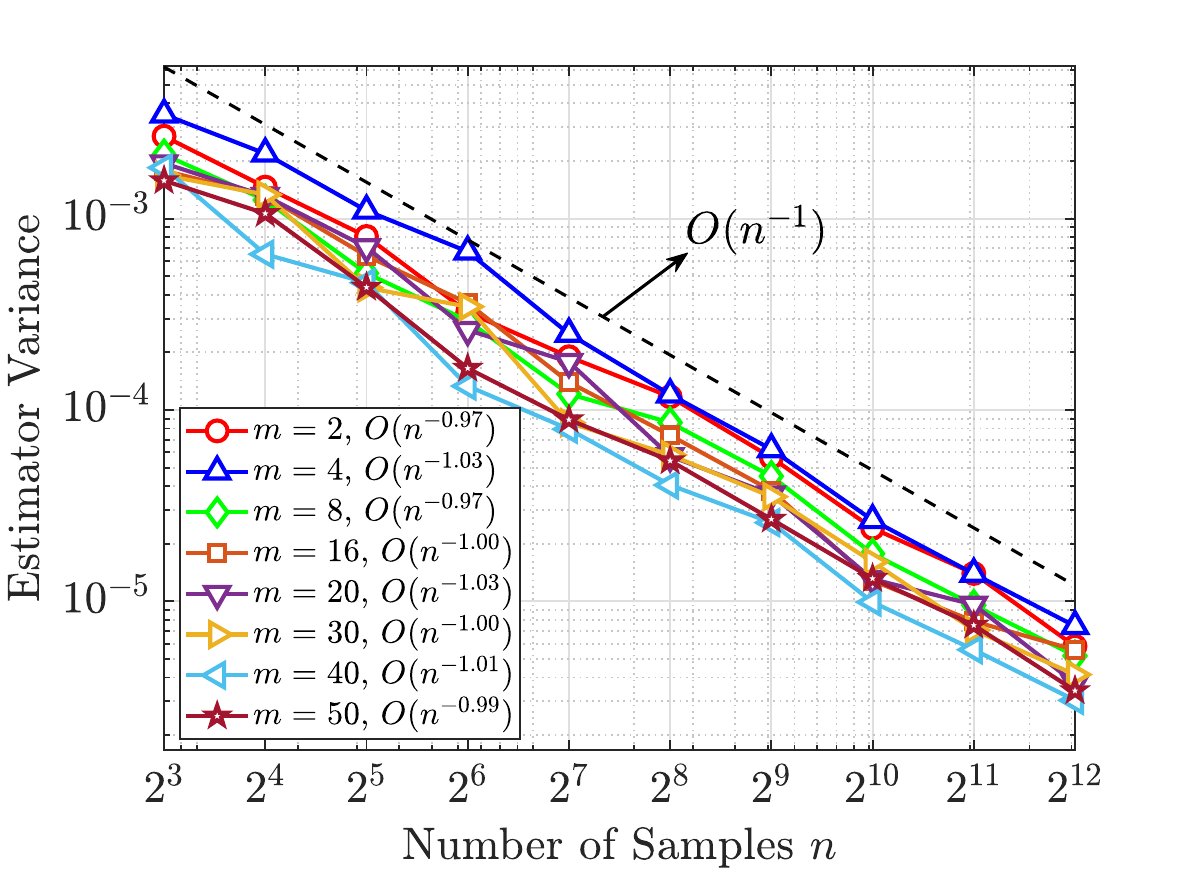}
        \caption{MC variance.}
        \label{fig:lawson-variance-mc}
    \end{subfigure}
    \hfill
    \begin{subfigure}[t]{0.32\textwidth}
        \centering
        \includegraphics[width=\textwidth]{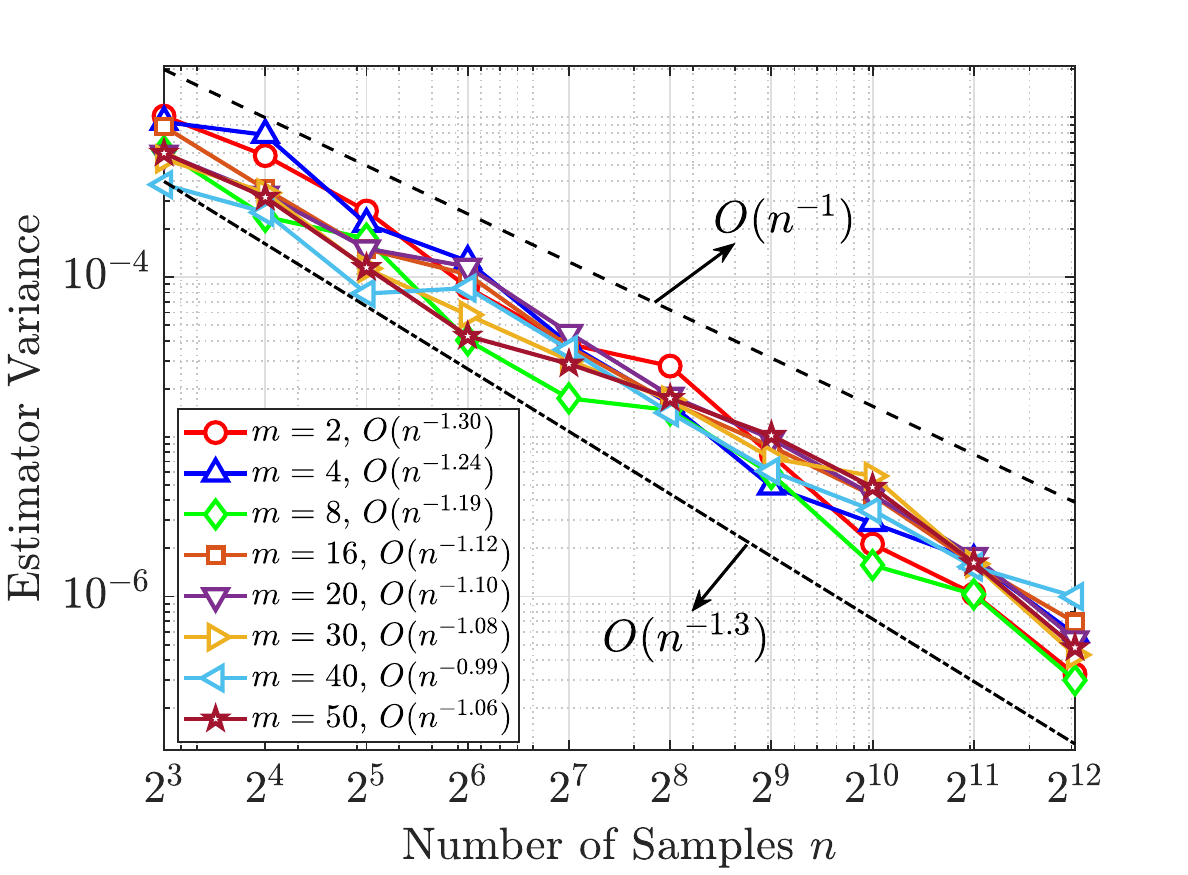}
        \caption{Direct RQMC variance.}
        \label{fig:lawson-variance-rqmc}
    \end{subfigure}
    \hfill
    \begin{subfigure}[t]{0.32\textwidth}
        \centering
        \includegraphics[width=\textwidth]{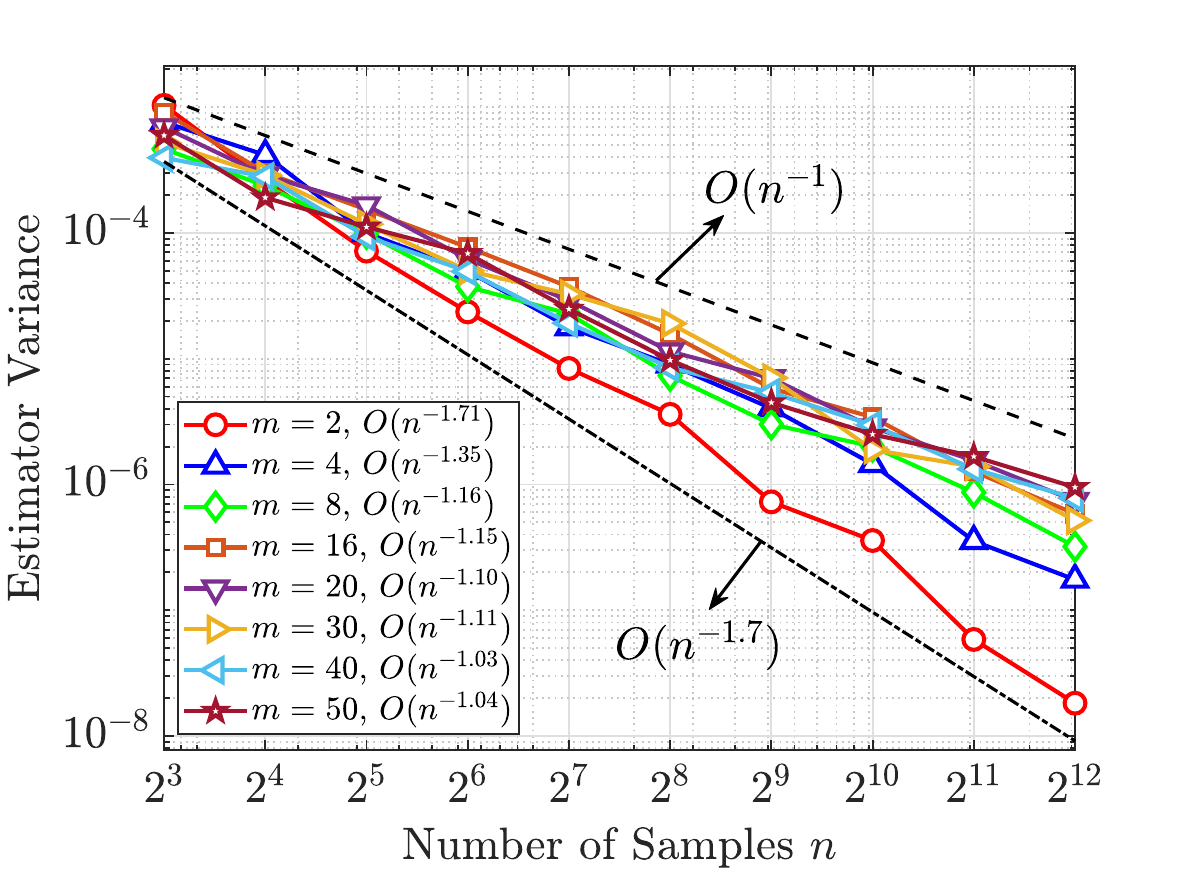}
        \caption{Array-RQMC variance.}
        \label{fig:lawson-variance-arqmc}
    \end{subfigure}
    \caption{Empirical variance of the MC, direct RQMC, and Array-RQMC p-Ret estimators on products of Lawson Klein bottles parameterized by \eqref{product para} for different intrinsic dimensions.}
    \label{fig:lawson-variance}
\end{figure}

Array-RQMC gives the strongest variance reduction in the low-dimensional cases, with a fitted rate of approximately $O(n^{-1.71})$ for $m=2$ and $O(n^{-1.35})$ for $m=4$. Its empirical rate also decreases as $m$ increases, approaching $O(n^{-1})$ in the highest-dimensional cases. Nevertheless, unlike direct RQMC, Array-RQMC does not operate on the full $s=(m-1)K$ dimensional path space, but only on an $m$-dimensional randomized point set at each time layer. The results therefore show that both RQMC constructions become less effective as the intrinsic dimension grows, while Array-RQMC provides the largest variance reduction over the practically relevant low- and moderate-dimensional range and avoids the much faster growth of the direct RQMC integration dimension.

Finally, we examine the spatial RMSE over different initial positions for $m=2,4,8,16,20,30,40,50$ and $n=2^j$, $j=3,\ldots,12$. At each test point, the numerical approximation is compared with \eqref{eq:lawson-exact}, and the spatial error is evaluated using \eqref{eq:numerical-rmse}. Figure~\ref{fig:lawson-rmse} shows a trend consistent with Figure~\ref{fig:lawson-variance}. MC remains close to the classical $O(n^{-1/2})$ rate, while the improvements of direct RQMC and Array-RQMC gradually weaken as $m$ increases, with Array-RQMC generally giving the smallest RMSE. The slight flattening at large $n$ in some cases is consistent with the fixed temporal discretization error becoming comparable to the sampling error.

\begin{figure}[htbp]
    \centering
    \begin{subfigure}[t]{0.32\textwidth}
        \centering
        \includegraphics[width=\textwidth]{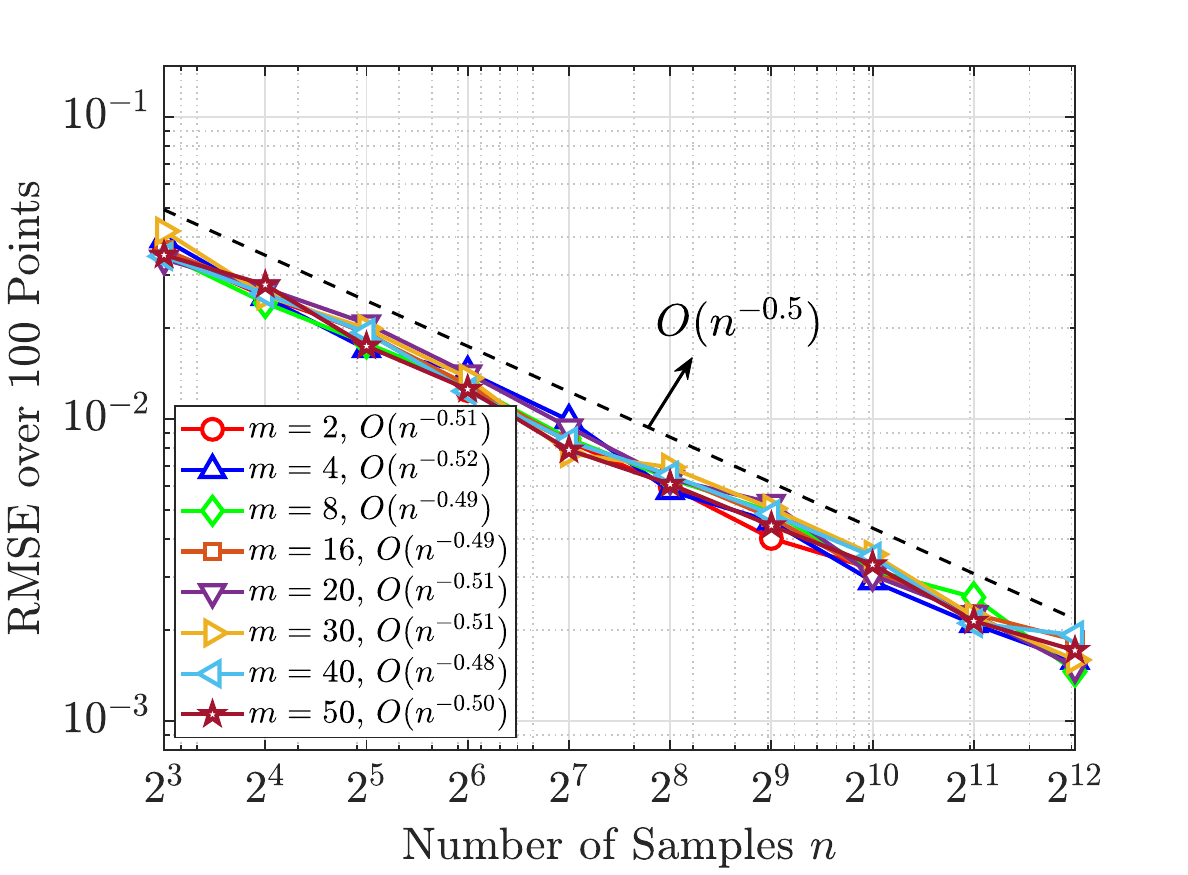}
        \caption{MC RMSE.}
        \label{fig:lawson-rmse-mc}
    \end{subfigure}
    \hfill
    \begin{subfigure}[t]{0.32\textwidth}
        \centering
        \includegraphics[width=\textwidth]{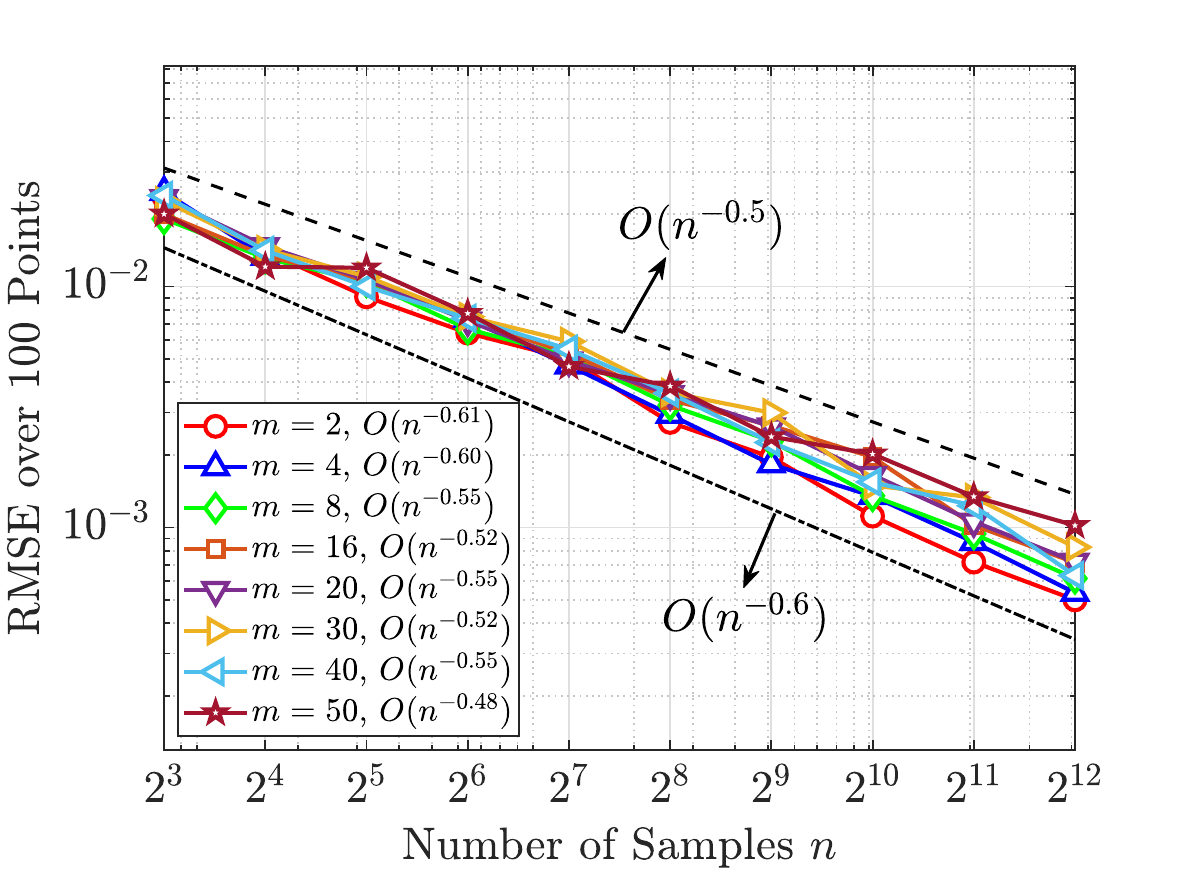}
        \caption{Direct RQMC RMSE.}
        \label{fig:lawson-rmse-rqmc}
    \end{subfigure}
    \hfill
    \begin{subfigure}[t]{0.32\textwidth}
        \centering
        \includegraphics[width=\textwidth]{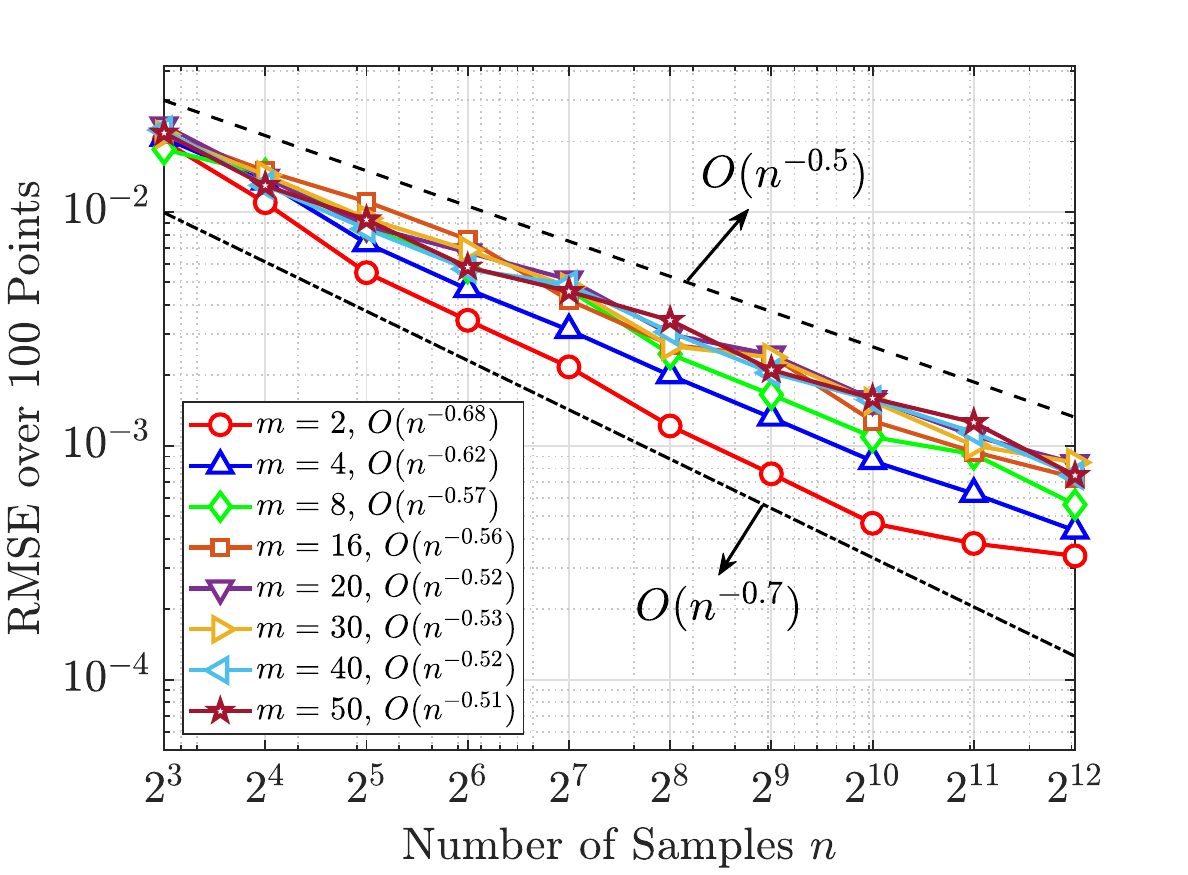}
        \caption{Array-RQMC RMSE.}
        \label{fig:lawson-rmse-arqmc}
    \end{subfigure}
    \caption{RMSE over $100$ fixed test points on products of Lawson Klein bottles parameterized by \eqref{product para} for different intrinsic dimensions.}
    \label{fig:lawson-rmse}
\end{figure}

Overall, the Lawson-product experiment confirms the first-order temporal accuracy of p-Ret and illustrates the effect of increasing intrinsic dimension on the sampling performance. The advantages of direct RQMC and Array-RQMC gradually weaken as the dimension increases, while Array-RQMC generally provides the strongest variance and RMSE reduction over the tested range.

\section{Conclusion}
\label{sec:conclusion}

We have developed a probabilistic framework for the pointwise solution of
linear parabolic equations on parameterized Riemannian manifolds. The main
feature of the framework is that the manifold diffusion is propagated through
local p-Ret dynamics, while the resulting discrete Feynman--Kac functional can
be sampled by MC, direct RQMC, or Array-RQMC without changing the underlying
time discretization. This separation of geometric approximation from sampling
provides a meshfree and naturally parallel approach that remains applicable to
manifolds of relatively high intrinsic dimension.

The analysis also reveals a marked difference between closed and Dirichlet
manifolds. On closed manifolds, the discrete Feynman--Kac functional retains
sufficient regularity for direct RQMC to improve on the standard Monte Carlo
variance rate, with nearly quadratic variance decay, up to logarithmic factors,
under stronger regularity assumptions. Boundary stopping changes this picture
substantially. In the Dirichlet case, numerical first exit creates
codimension-one discontinuity interfaces in path space. A layerwise
decomposition, together with Owen's boundary growth condition and geometric
control of the switching interfaces, nevertheless recovers a variance rate
strictly better than the Monte Carlo benchmark. Array-RQMC provides a different
resolution of the high-dimensional sampling problem: it replaces full
path-space randomization by sequential low-dimensional randomizations and
retains variance $o(n^{-1})$ for fixed $K$ under the stated stability
assumptions.

The numerical experiments support these theoretical findings and clarify where
the different sampling strategies are most effective. Both RQMC approaches
provide clear variance and RMSE reductions in low and moderate intrinsic
dimensions, while their advantage decreases as the dimension grows. The
experiments also highlight the different sources of dimensional growth:
direct RQMC is affected by the full path-space dimension $(r-1)K$, whereas
Array-RQMC works with stepwise randomized point sets whose dimension does not
grow with the number of time steps. Further improvements may come from sharper
control of the numerical first-exit error, verifiable stability conditions for
Array-RQMC, and extensions of the present construction beyond globally
parameterized manifolds.

\end{document}